\renewcommand{\epsilon}{\varepsilon}
\renewcommand{\setminus}{\smallsetminus}
\renewcommand{\emptyset}{\varnothing}
\newtheorem{theorem}{Theorem}[section]
\newtheorem{proposition}[theorem]{Proposition}
\newtheorem{corollary}[theorem]{Corollary}
\newtheorem{lemma}[theorem]{Lemma}
\theoremstyle{definition}
\newtheorem{example}[theorem]{Example}
\newtheorem{definition}[theorem]{Definition}
\newtheorem{notation}[theorem]{Notation}
\newtheorem*{notation*}{Notation}
\newtheorem*{ack}{Acknowledgements}
\theoremstyle{remark}
\newtheorem{remark}[theorem]{Remark}
\newtheorem*{remark*}{Remark}
\newtheorem{fact}[theorem]{Fact}
\newcommand{\cd}{\operatorname{cd}}
\newcommand{\hd}{\operatorname{hd}}
\newcommand{\normal}{\lhd}
\newcommand{\supp}{\operatorname{supp}}
\newcommand{\Sym}{\operatorname{Sym}}
\newcommand{\FSym}{\operatorname{FSym}}
\newcommand{\Alt}{\operatorname{Alt}}
\newcommand{\fix}{\operatorname{Fix}}
\newcommand{\Q}{\mathbb Q}
\newcommand{\Z}{\mathbb Z}
\newcommand{\N}{\mathbb N}
\newcommand{\R}{\mathbb R}
\newcommand{\Ra}{\mathcal{R}}
\newcommand{\B}{\mathcal B}
\newcommand{\base}{\Phi}
\newcommand{\quotient}[2]{{\left.\raisebox{.2em}{$#1$}\middle/\raisebox{-.2em}{$#2$}\right.}}
\newcommand{\fpinfty}{{\FP}_{\infty}}
\newcommand{\FFF}{\operatorname{F}}
\newcommand{\FP}{\operatorname{FP}}
\newcommand{\Aut}{\operatorname{Aut}}
\renewcommand{\ker}{\operatorname{Ker}}
\newcommand{\im}{\operatorname{Im}}
\renewcommand{\implies}{\Rightarrow}
\newcommand{\Hom}{\operatorname{Hom}}
\newcommand{\lex}{\buildrel{\textrm{lex}}\over\prec}
\newcommand{\gfin}{G_{\textit{fin}}}
\newcommand{\kfin}{K_{\textit{fin}}}
\newcommand{\hfin}{H_{\textit{fin}}}
\newcommand{\lfin}{L_{\textit{fin}}}
\newcommand{\gamfin}{\Gamma_{\textit{fin}}}
\newcommand{\stab}{\operatorname{stab}}
\renewcommand{\wr}{\operatorname{\,wr\,}}
\newcommand{\Wr}{\operatorname{\,\overline{wr}\,}}
\title[Subgroups of Houghton Groups]
{Finiteness properties of Subgroups of Houghton Groups of full Hirsch length}
\author{Charles Garnet Cox, Peter H. Kropholler, and Armando Martino}
\address{School of Mathematics, University of Bristol, Bristol BS8 1UG}
\email{charles.cox@bristol.ac.uk}
\address{Mathematical Sciences, University of Southampton, Southampton SO17 1BJ}
\email{p.h.kropholler@soton.ac.uk}
\email{a.martino@soton.ac.uk}
\date{\today} 
\subjclass{18G10, 20J05}
\keywords{homological finiteness, group, wreath product, Houghton group, full Hirsch length}
\begin{document}

\begin{abstract}
In the 1980’s K.S.\ Brown proved that the Houghton group $H_n$ is of type $\FFF_{n-1}$ but not $\FP_n$. We show that, provided $n\ge3$, the same conclusion holds for all subgroups $G$ of $H_n$ that are \emph{large} in the sense that there is an epimorphism $G\twoheadrightarrow\Z^{n-1}$.

Our research leads naturally to the study of generalised permutational wreath products in which the base of the wreath product is a direct product of finite groups which are allowed to vary in isomorphism type from one orbit to another. Such generalised wreath products arise naturally amongst the large subgroups of Houghton groups and are accommodated by a generalised Jordan--Wielandt theorem.
\end{abstract}

\maketitle

\tableofcontents

\section{Introduction}
Let $H_n$ denote Houghton's group on $n$ rays. 
Brown proved \cite{Brown1987} that for each $n\ge1$, $H_n$ is of type $\FFF_{n-1}$ but not $\FP_n$. The main goal of this paper is to show that if $n$ is at least $3$ then \emph{large} subgroups of Houghton's group $H_n$ satisfy the same finiteness conditions as $H_n$ itself. By \emph{large} we mean \emph{having the same Hirsch length or rational homological dimension}. Our results employ a rich tapestry of ideas, including new results about permutational wreath products.
We refer the reader to \cite{BB,Bieri,Brown1987} for background information about the geometric and cohomological finiteness conditions, type $\FFF_n$ and type $\FP_n$. 

\begin{notation*} The set of natural numbers including $0$ is denoted by $\N$. (This convention differs from that chosen by Brown \cite[\S5]{Brown}.) 
\end{notation*}
The groups $H_1, H_2, H_3, \ldots$ constitute a family introduced in \cite{MR521478} by Houghton. Given $n\in \{1, 2, 3, \ldots\}$, 
the ray system $\Ra_n$ is defined formally to be $\{1,\ldots,n\}\times \N$: informally $\Ra_n$ is a disjoint union of $n$ \emph{rays} each of which is a copy of the set $\N$. Houghton's group $H_n$ consists of all permutations $g$ of $\Ra_n$ for which there is an integer vector $(t_1(g),\ldots, t_n(g))\in\Z^n$ with $(j,\ell)g=(j,\ell+t_j(g))$ for all $j$ and all sufficiently large $\ell$. For such a $g$ this integer vector is uniquely determined by $g$ and it is necessarily the case that $t_1(g)+\cdots+t_n(g)=0$. Furthermore, the map $g\mapsto (t_1(g),\ldots, t_n(g))$ defines a homomorphism $t$ from $H_n$ to $\Z^n$ whose image is in the set of zero-sum vectors and whose kernel is the set of finitary permutations of $\Ra_n$, which we denote by $\FSym(\Ra_n)$. It is easy to see that any zero-sum vector in $\Z^n$ arises as the \emph{translation vector} of a suitable permutation $g$ and therefore there is an exact sequence of groups
$$1\to \FSym(\Ra_n)\to H_n\buildrel{t}\over\to \Z^n\buildrel\epsilon\over\to\Z\to 0$$ in which $\epsilon$ is the \emph{augmentation map} given by 
$(m_1,\ldots,m_n)\buildrel\epsilon\over\mapsto m_1+\cdots+m_n$.
Thus $H_n$ is an extension of the countably infinite finitary symmetric group by a free abelian group of rank $n-1$. In particular $H_n$ is countable and \emph{elementary amenable}, the latter meaning that it belongs to the smallest class of groups closed under directed unions and extensions that contains all finite and all abelian groups.

For any group $G$ we write $\hd_\Q(G)$ and $\cd_\Q(G)$ for the homological and cohomological dimensions of $G$ over the field $\Q$ of rational numbers. For an elementary amenable group $G$ we write $h(G)$ for its Hirsch length. 

  In general, the Hirsch length $h(G)$ is defined for any elementary amenable group $G$ and takes its value in the set $\N\cup\{\infty\}$. An elementary amenable group has finite Hirsch length if and only if it admits a subnormal series in which the factors are infinite cyclic or locally finite and in that case the Hirsch length is precisely the number of infinite cyclic factors.

\begin{notation*} For any subgroup $G$ of $H_n$ we write $\gfin$ for the subset $G\cap\FSym(\Ra_n)$ of finitary permutations. We say that a subgroup $G$ of $H_n$ \emph{has full Hirsch length} when $G$ has the same Hirsch length as $H_n$.
\end{notation*}

In particular $H_n$ is an elementary amenable group of Hirsch length $n-1$. For an account of elementary amenable groups and the concept of Hirsch length, see \cite{HL} and \cite[Definition I.15]{Bridson2015}.

We shall also describe $H_n$ as the group of almost order preserving bijections of $\Ra_n$, when the latter is given the lexicographic ordering - see Proposition~\ref{aopisHou}. 

Our first significant result follows directly from a result of Leary--Nucinkis, \cite{Leary2001} together with an analysis of abelian subgroups of Houghton groups due to St. John-Green.

\begin{restatable*}{theorem}{NoFPn}\label{NoFPn}
	For any $n\ge1$ and any subgroup $G$ of $H_n$ the following are equivalent.
	\begin{enumerate}
	\item $G$ is of type $\FP_n$
	\item $G$ is of type $\FP_k$ for some $k\ge n$
	\item $G$ is of type $\fpinfty$
	\item $\gfin$ is finite and $G/\gfin$ is free abelian of rank at most $\lfloor\frac n2\rfloor$
	\item $\gfin$ is finite.
	\end{enumerate}
Moreover if $G$ satisfies these conditions then either 
$h(G)<h(H_n)$ or $n<3$. 
\end{restatable*} 
This is easily deduced by combining work of Hillman--Linnell and Stammbach with more recent work of Leary--Nucinkis, and is proved in the next section.

The main theorem of this paper asserts that subgroups of Houghton's groups of full Hirsch length typically satisfy the same finiteness conditions as their ambient Houghton group. Recall that \textbf{max-n} is the ascending chain condition on normal subgroups -- see Definition~\ref{maxn}. 

\begin{restatable*}{theorem}{main}	\label{main}
Fix $n\ge 2$. Let $G$ be a subgroup of $H_n$ that has full Hirsch length. Then $G$ is of type $\FFF_{n-1}$ and has \textbf{max-n}. Moreover, 
	\begin{enumerate}
		\item If $n \ge 3$ then $G$ is not of type $\FP_n$, 
		\item If $n=2$, then either $G$ is not of type $\FP_2$ or $G$ is finite-by-$\Z$ (and so is of type $\fpinfty$).
	\end{enumerate}
\end{restatable*}

The reader may notice that we have nothing to say about the case $n=1$. Our theorem describes a uniform pattern of results for $n\ge3$.
For $n=2$, this pattern is broken.
The Hirsch length of $H_2$ is 1, and any subgroup $G$ of $H_2$ has Hirsch length 0 or 1. In summary, we have the following.
\begin{lemma} For $n=2$ and for any subgroup $G$ of $H_2$ we have
\begin{enumerate}
\item $G$ has Hirsch length 0 if and only if $G=\gfin$, and
\item $G$ has full Hirsch length if and only if $G$ contains an element of infinite order.
\end{enumerate}
\end{lemma}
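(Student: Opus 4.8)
The plan is to read everything off the defining exact sequence in the case $n=2$. For $n=2$ the image of $t_*$ is the group of zero-sum vectors $\{(m,-m):m\in\Z\}$, which is infinite cyclic, so we have $1\to\FSym(\Ra_2)\to H_2\buildrel{t_*}\over\to\Z\to0$. Restricting $t_*$ to a subgroup $G\le H_2$ gives a short exact sequence $1\to G_{\textit{fin}}\to G\to t_*(G)\to0$, in which $t_*(G)$, being a subgroup of $\Z$, is either trivial or infinite cyclic, and in which $G_{\textit{fin}}\le\FSym(\Ra_2)$ is locally finite --- in fact torsion, since any finitary permutation of a set is a product of finitely many finite cycles.

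For (i) I would first note that $h(G_{\textit{fin}})=0$, since locally finite groups have Hirsch length $0$, and then apply additivity of the Hirsch length along the extension above to get $h(G)=h\big(t_*(G)\big)$. If $G=G_{\textit{fin}}$, i.e.\ $G\subseteq\FSym(\Ra_2)=\ker t_*$, then $t_*(G)=0$ and so $h(G)=0$. Conversely, if $h(G)=0$ then $h\big(t_*(G)\big)=0$, which forces the subgroup $t_*(G)$ of $\Z$ to be trivial, whence $G\subseteq\ker t_*$, that is, $G=G_{\textit{fin}}$. (Equivalently, one may use that an elementary amenable group has Hirsch length $0$ iff it is locally finite, together with the fact that the only locally finite subgroup of $\Z$ is the trivial one.)

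For (ii), recall that $h(H_2)=1$, so ``$G$ has full Hirsch length'' means $h(G)=1$. By the computation in (i) we have $h(G)\in\{0,1\}$, with $h(G)=1$ precisely when $t_*(G)\neq0$. It therefore suffices to prove that $t_*(G)\neq0$ if and only if $G$ contains an element of infinite order. If $g\in G$ has $t_*(g)\neq0$, then $t_*(g^k)=k\,t_*(g)\neq0$ for every nonzero integer $k$, so $g^k\neq1$ and $g$ has infinite order; conversely, an element of infinite order of $G$ cannot lie in the torsion group $\FSym(\Ra_2)=\ker t_*$, so it has nonzero image under $t_*$ and hence $t_*(G)\neq0$.

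I do not expect a genuine obstacle: the whole argument is structural. The only points needing a little care are invoking the standard behaviour of the Hirsch length for elementary amenable groups (additivity on group extensions, and that locally finite groups have Hirsch length $0$), and the observation that $\FSym(\Ra_2)$ is a torsion group --- it is exactly this last fact that makes the presence of a single element of infinite order equivalent to having full Hirsch length.
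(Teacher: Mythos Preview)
Your proof is correct. The paper does not actually supply a proof of this lemma: it is stated as a summary (``In summary we have\dots'') immediately after observing that $h(H_2)=1$ and that any subgroup of $H_2$ has Hirsch length $0$ or $1$, and is treated as evident from the surrounding discussion. Your argument via the restricted short exact sequence $1\to G_{\textit{fin}}\to G\to t_*(G)\to 0$, additivity of Hirsch length, and the fact that $\FSym(\Ra_2)$ is torsion is exactly the natural way to make that implicit reasoning explicit, and it matches the paper's later Lemma~\ref{lem:structureofkfin} (which gives the analogous equivalence for general $n$). There is nothing to correct.
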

Combination of this Lemma and our main Theorem stated above tells the complete story for finiteness conditions of subgroups of $H_2$.

When $n=1$ the situation is simple and there is really nothing to say. Houghton's first group $H_1$ is locally finite and its subgroups, which all have full Hirsch length in our sense, are either finite and so $\fpinfty$ or infinite and so not $\FP_1$. In this case there is nothing useful one can say about the \textbf{max-n} condition which holds for some but not all (infinite) subgroups.

In order to prove Theorem~\ref{main}, we prove the following structure theorem. We define the notion of a strongly orbit primitive action in Definition~\ref{def:stronglyorbitprimitive} and multi-wreath products in Section \ref{sec:multiwreath}.\\

\begin{restatable*}{theorem}{structure}  \label{thm:structure}
Let $n\in \{3, 4, \ldots\}$ and $G$ be a subgroup of $H_n$ with $h(G)=n-1$. Then $G$ is abstractly commensurable to $\mathcal{W} \wr \Gamma$, a restricted multi-wreath product, where:
\begin{enumerate}
	\item $\mathcal{W}=\{W_1, \ldots, W_k\}$ and $W_1, \ldots, W_k$ are finite groups;
	\item $\Gamma$ is a subgroup of full Hirsch length of the $n$th Houghton group; and
	\item $\Gamma$ acts on the ray system strongly orbit primitively and with only infinite orbits $\Omega_1, \ldots, \Omega_k$.
\end{enumerate}
\end{restatable*}

\begin{remark*}
	Although we do not use this, it is fairly straightforward to show that any group one can construct as above can be embedded into $H_n$ as a subgroup of full Hirsch length. 
\end{remark*}

Recall that two groups $A$ and $B$ are said to be \emph{abstractly commensurable} if there exists a group $C$ and monomorphisms $\phi_A: C\to A$ and $\phi_B: C\to B$ whose images are finite index in $A$ and $B$ respectively.

One example to bear in mind for Theorem \ref{thm:structure} is the group $H_n$ and $G\le H_n$ a point stabiliser of the ray system $\Ra_n$. Then $G$ is infinite index as a subgroup, but is abstractly isomorphic, and hence abstractly commensurable, to $H_n$. This is a subgroup of full Hirsch length - see Remark~\ref{point stab}.

\medspace

\noindent\textit{Organisation}. Most of our work is in proving Theorem \ref{thm:structure}, with Section \ref{sec:thmAproof} using this to prove Theorem \ref{main}. We begin with $G\le H_n$ of full Hirsch length. In Section \ref{sec:fullhirsch} we show that up to abstract commensurability we may assume that $\pi(G)=(d\Z)^{n-1}$ for some $d\in\N$ and $G$ has orbits $\mathcal{O}_1, \ldots, \mathcal{O}_k$, each infinite. In Section \ref{sec:multiwreath} we show, under certain hypotheses (which are verified in Section \ref{sec:cameron}) that such a group is finite index in a multi-wreath product. In Section \ref{sec:subdirect} we consider subdirect products. We do so since the head of our wreath product is subdirect in $\Gamma_1\times\cdots\times\Gamma_k$ where $\Gamma_i$ is finite index in $H_n$ for $i=1, \ldots, k$ and the head contains the full product of the alternating groups for each factor. We then use results from \cite{KM1} to deduce the finiteness properties of our multi-wreath products via the BNS invariants of certain subdirect products of Houghton groups. 

An important ingredient in our arguments is a version of the Jordan--Wielandt theorem for infinite permutation groups. Our version of the Jordan--Wielandt theorem allows non-transitive actions with a finite number of infinite orbits and may be stated as follows. 

\begin{restatable*}{theorem}{JordanWielandt}\label{prodalt} Let $\Omega$ be an infinite set and $\Gamma\le\Sym(\Omega)$ satisfy:
\begin{enumerate}[(i)]
\item The orbits of $\Gamma$ are exactly $\Omega_1, \ldots, \Omega_k$, with each of these being infinite and $\bigcup_{i\le k}\Omega_i=\Omega$. Hence $\Gamma\le \Sym(\Omega_1)\times\cdots\times\Sym(\Omega_k)$, a subgroup of $\Sym(\Omega)$.
\item\label{two}  There exists $\gamma\in \Gamma$, where $\gamma$ is finitary and has support that meets every orbit $\Omega_1, \ldots, \Omega_k$.
\item\label{three} The action of $\Gamma$ on $\Omega$ is strongly orbit primitive.
\end{enumerate}
Then $\Gamma\ge \bigoplus_{i=1}^k\Alt(\Omega_i)$.
\end{restatable*}

\begin{ack}
	We dedicate this paper to the memory of Chris Houghton, who died on 6 May 2024, while this work was in preparation. 
\end{ack}

\section{Preliminaries}

\begin{notation} For a set $X\ne \emptyset$, let:
\begin{itemize}
\item $\Sym(X)$ denote the set of all permutations of $X$;
\item $\supp(g):=\{x\in X : xg\ne x\}$ for any $g\in\Sym(X)$;
\item $\FSym(X):=\{g\in \Sym(X) : |\supp(g)|<\infty\}$; and
\item $\Alt(X)\le \FSym(X)$ consists of all even permutations.
\end{itemize}
If it is clear from the context, then we will omit the set $X$ from these notations.
\end{notation}
\begin{lemma}\label{lem:AltisSimple}
If $X$ is an infinite set, then $\Alt(X)$ is simple.
\end{lemma}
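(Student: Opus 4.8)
The plan is to show that every nontrivial normal subgroup of $\Alt(X)$ must be all of $\Alt(X)$, thereby reducing the statement to the classical fact that the finite alternating group $A_m$ is simple for every $m\ge 5$.

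First I would fix a nontrivial normal subgroup $N\normal\Alt(X)$ and choose $\sigma\in N$ with $\sigma\ne 1$. Since $\sigma$ is finitary, $\supp(\sigma)$ is a nonempty finite subset of $X$, and because $X$ is infinite we may choose a finite subset $Y\subseteq X$ with $\supp(\sigma)\subseteq Y$ and $|Y|\ge 5$. For a finite subset $Z\subseteq X$, regard $\Alt(Z)$ as the subgroup of $\Alt(X)$ consisting of those even permutations whose support is contained in $Z$; enlarging $Z$ leaves the parity of such a permutation unchanged, so for $Z\subseteq Z'$ this yields genuine inclusions $\Alt(Z)\le\Alt(Z')$, and since every element of $\Alt(X)$ has finite support, $\Alt(X)=\bigcup_{Z}\Alt(Z)$ over all finite $Z\subseteq X$.

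Next I would observe that for every finite $Z$ with $Y\subseteq Z\subseteq X$, the subgroup $N\cap\Alt(Z)$ is normal in $\Alt(Z)$ (it is the intersection of the normal subgroup $N$ of $\Alt(X)$ with the subgroup $\Alt(Z)$) and it contains $\sigma\ne 1$; because $|Z|\ge|Y|\ge 5$, the group $\Alt(Z)\iso A_{|Z|}$ is simple, hence $N\cap\Alt(Z)=\Alt(Z)$, i.e.\ $\Alt(Z)\subseteq N$. Finally, given any $\tau\in\Alt(X)$, put $Z:=Y\cup\supp(\tau)$: this is a finite set containing $Y$, so $\tau\in\Alt(Z)\subseteq N$, whence $N=\Alt(X)$, as required.

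The argument has no serious obstacle: it is essentially a matter of organising the directed-union structure $\Alt(X)=\bigcup_Z\Alt(Z)$ around the classical simplicity of the finite alternating groups. The only points needing a moment's care are the use of the infinitude of $X$ to enlarge $\supp(\sigma)$ to a set of size at least $5$ (so that the relevant finite alternating group is genuinely simple, not $A_3$ or $A_4$), and the routine check that passing to a larger finite subset is parity-preserving, so that the identifications $\Alt(Z)\le\Alt(X)$ are mutually compatible.
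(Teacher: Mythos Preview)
Your argument is correct and follows essentially the same approach as the paper's proof: both pick a nontrivial $\sigma\in N$, enlarge $\supp(\sigma)$ to a finite set of size at least $5$, and invoke simplicity of the finite alternating group to conclude that $N$ contains $\Alt$ of that finite set. The only cosmetic difference is in the final step: the paper observes that $N$ therefore contains a $3$-cycle and hence, by normality, every $3$-cycle (and so all of $\Alt(X)$), whereas you instead run the same containment for every finite $Z\supseteq Y$ and appeal to the directed-union description $\Alt(X)=\bigcup_Z\Alt(Z)$.
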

\begin{proof} Let $N$ be a non-trivial normal subgroup of $\Alt(X)$, and take $\sigma\in N\setminus\{1\}$. Then $\supp(\sigma)$ is finite, and so contained within a finite $F\subset X$ where without loss of generality we can assume that $|F|\ge 5$. Let $A:=\Alt(F)$. Then $N\cap A$ is a normal non-trivial subgroup of $A$ and hence $N\cap A=A$. Then $N$ contains a 3-cycle and so, by normality, every 3-cycle in $\Alt(X)$. Since $\Alt(X)$ is generated by the set of all 3-cycles, we conclude the $N=\Alt(X)$.
\end{proof}
We recall the definition of the Houghton groups, $H_n$, for reference.
\begin{definition} Let $n\in \{2, 3, \ldots\}$. We define $H_n\le \Sym(\Ra_n)$ to consist of those $g\in \Sym(\Ra_n)$ for which there is an integer vector $(t_1(g),\ldots, t_n(g))\in\Z^n$ and $n_g\in \N$ where $(j,\ell)g=(j,\ell+t_j(g))$ for all $j$ and all $\ell\ge n_g$. Observe that $H_1=\FSym(\Ra_1)$.
\end{definition}

From our above definition, it is perhaps not obvious that the groups $H_2, H_3, \ldots$ are finitely generated. We begin by describing, for each of these groups, a finite generating set. These can be found, for example, in \cite{MR3078485}.

\begin{notation}\label{not:permsgj}
Let $n\in\{2, 3, \ldots\}$ and $j\in\{2, \ldots, n\}$. Then $g_j$ is the permutation of $\Ra_n$ whose support is equal to $R_1\cup R_j$ and
\begin{equation*}
(i,m)g_j=\left\{\begin{array}{ll}(1, m+1) & \mathrm{if}\ i=1\ \mathrm{and}\ m\in\N\\ (1,0) & \mathrm{if}\ i=j\ \mathrm{and}\ m=0\\ (j, m-1) & \mathrm{if}\ i=j\ \mathrm{and}\ m\in\{1, 2, \ldots\}.
\end{array}\right.
\end{equation*}
\end{notation}

\begin{remark*} For $n\ge3$ we have that $H_n=\langle g_2, \ldots, g_n\rangle$. Also $H_2= \langle ((1, 1)\;(1,2)), g_2\rangle$.
\end{remark*}

The following will be useful.

\begin{lemma}[\cite{MR3302573}, Corollary 2.8]
	\label{3.1}
	Every element of $H_n$, considered as a permutation group on $\Ra_n$, is uniquely expressible as a product of finitely many disjoint cycles. For $g\in H_n$, the number of infinite cycles in its makeup is $\frac{1}{2}\sum_{j=1}^n|t_j(g)|.$
\end{lemma}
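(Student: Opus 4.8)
The plan is to prove both assertions from the defining property of $H_n$: there is a finite bound $N$ (the integer $n_g$ in the definition) beyond which $g$ acts on each ray $R_j$ as the pure translation $(j,\ell)\mapsto(j,\ell+t_j(g))$. Writing $t_j:=t_j(g)$, I would split $\Ra_n$ into the finite \emph{core} $C=\{(j,\ell):\ell<N\}$ and its complement $T=\bigcup_j T_j$, where $T_j=\{(j,\ell):\ell\ge N\}$, and record that on $T$ the permutation $g$ keeps each point on its own ray and translates its second coordinate by $t_j$. Uniqueness of the cycle decomposition then requires no argument: the orbits of $\langle g\rangle$ partition $\Ra_n$, each orbit carries a unique cyclic structure, and the non-trivial cycles are exactly the orbits of size $\ge 2$. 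All the content therefore lies in (a) finiteness of the number of non-trivial cycles and (b) the count of the infinite ones.

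For the finite cycles I would observe that on $T$ orbits cannot change rays, and there $g$ either translates by a nonzero amount (when $t_j\ne0$) or fixes every point (when $t_j=0$); in either case $T$ contains no finite orbit of size $\ge 2$. Hence every non-trivial finite cycle meets the finite set $C$, and since distinct cycles are disjoint there are at most $|C|$ of them. For the infinite cycles I would set up a bijection with ``forward strands''. The forward tail $x_0,x_1,\dots$ of an infinite orbit consists of distinct points, so it escapes every finite set and eventually enters $T$; once in $T$ it stays on a single ray $j$, and it can remain in $T$ forever only if $t_j>0$ (if $t_j<0$ the second coordinate decreases back into the finite core $C$, contradicting escape; if $t_j=0$ the point is fixed, so the orbit is trivial). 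Thus each infinite orbit has a unique \emph{forward exit ray} $j$ with $t_j>0$, and its forward tail eventually coincides with one of the residue strands $S_r=\{(j,\ell)\in T_j:\ell\equiv r\pmod{t_j}\}$, of which there are exactly $t_j$ on ray $j$.

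Conversely each such strand is forward-invariant and lies in a single infinite orbit, and distinct strands lie in distinct orbits because an orbit has only one forward direction. This gives a bijection between the infinite cycles of $g$ and the set $\bigsqcup_{t_j>0}\{S_0,\dots,S_{t_j-1}\}$, so the number of infinite cycles equals $\sum_{t_j>0}t_j$; in particular it is finite, which together with the previous paragraph shows the total number of non-trivial cycles is finite. Finally, using $\sum_j t_j=0$ (valid for every element of $H_n$), I would write $t_j=t_j^+-t_j^-$ with $t_j^\pm\ge 0$, so that $\sum_j t_j^+=\sum_j t_j^-$ and $|t_j|=t_j^++t_j^-$; hence $\sum_{t_j>0}t_j=\sum_j t_j^+=\tfrac12\sum_{j=1}^n|t_j|$, which is the asserted formula.

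The main obstacle will be making the bijection in the third step fully rigorous, in particular verifying that ``forward exit ray together with forward strand'' is a complete invariant of an infinite orbit (both well-definedness and injectivity). This rests entirely on the escape-to-infinity argument for the forward tail and on the fact that on $T$ the action is ray-preserving translation, so that an orbit has a single, unambiguous forward end. Once the core/tail decomposition is fixed, everything else is routine bookkeeping.
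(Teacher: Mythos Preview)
The paper does not prove this lemma; it is quoted as Corollary~2.8 of \cite{MR3302573} and used as a black box, so there is no in-paper argument to compare against.

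Your argument is correct and is essentially the natural one. The core/tail split is the right decomposition, and your bijection between infinite cycles and forward strands on rays with $t_j>0$ is sound: the decisive observation is that a point of $T_j$ with $t_j>0$ has its entire forward $g$-orbit inside the same residue strand of $T_j$, so two distinct strands cannot belong to the same orbit. The one step a careful reader might want spelled out is that the forward tail \emph{eventually stays} in $T$ rather than merely visiting it. This follows because the tail is injective and $C$ is finite, so the tail meets $C$ only finitely many times; beyond the last such visit it is confined to $T$, hence to a single ray, and your trichotomy on the sign of $t_j$ then forces $t_j>0$. You have the ingredients (``consists of distinct points'', ``escapes every finite set'', ``contradicting escape''), so this is only a matter of tightening the wording. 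The finite-cycle count (each nontrivial finite cycle meets $C$, since on $T$ the map is a ray-preserving translation or the identity) and the closing arithmetic from $\sum_j t_j=0$ are both fine as written.
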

The following will be a profitable reformalisation of a Houghton group. Consider a lexicographical ordering $\lex$, defined by $$(j,k)\lex(j',k')$$ if and only if either $j<j'$ or $j=j'$ and $k<k'$. Under the lex order, $\Ra_n$ is well-ordered with order type $\omega n$; see Figure \ref{fig}. We can define the Houghton groups in terms of this ordering.

\begin{figure}[htbp] 
	\centering
	\includegraphics[width=3in]{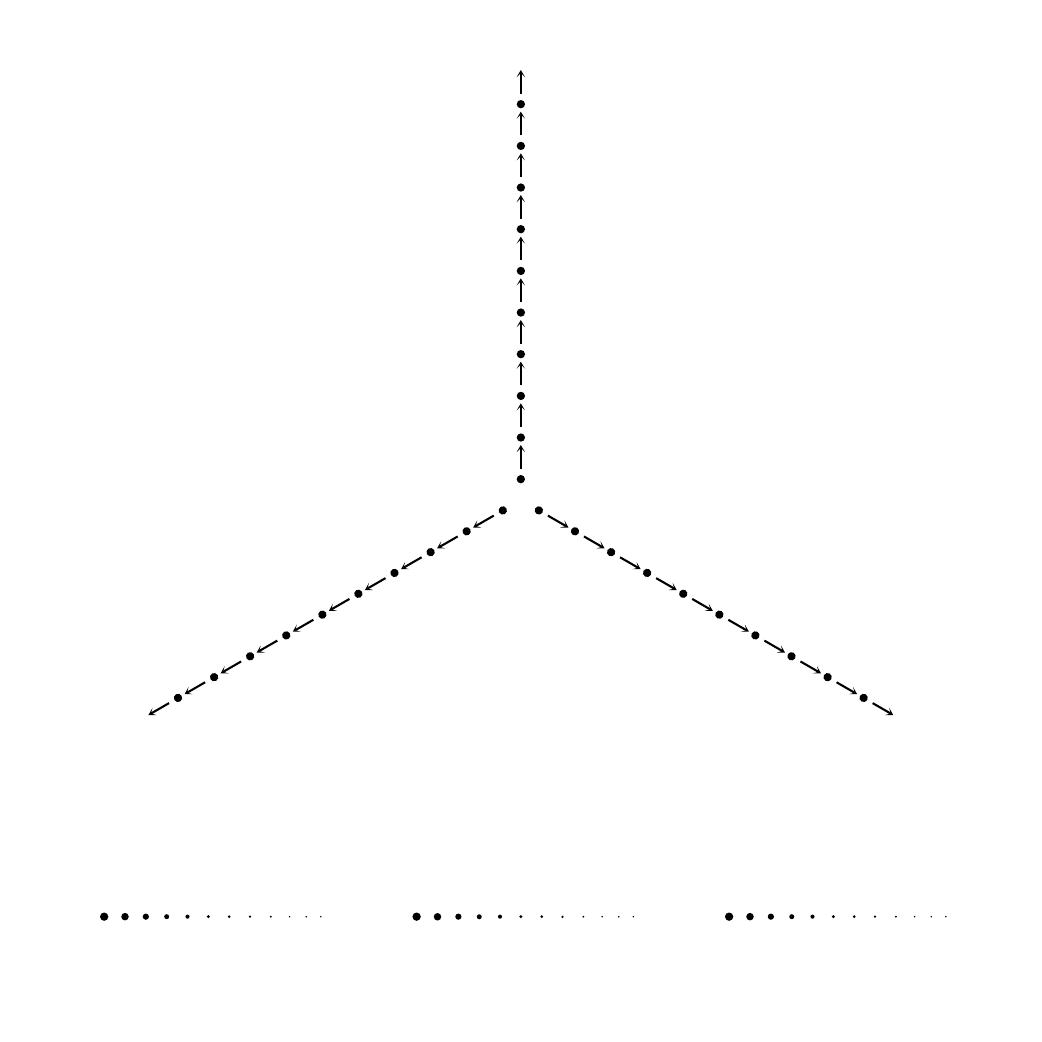}
	\vspace{-0.75cm}
	\caption{The ray set $\Ra_3$ ordered as three rays above and lexicographically below.}
	\label{fig}
\end{figure}

\begin{definition}
	Let $(S,\prec)$ and $(T,\prec)$ be posets. Then a function $f:S\to T$ is \emph{almost order preserving} provided there is a finite set $F\subseteq S$ such that for any pair $s\prec s'$ in $S\setminus F$ we have $f(s)\prec f(s')$ in $T$.
\end{definition}
The following well-known results about orderings will be useful.
\begin{fact}\label{orders1} If $S$ and $T$ are well-ordered, then there exists a partial order isomorphism $f: S\to T$ which sends the initial segments of $S$ to the initial segments of $T$ and either the domain of $f$ is $S$ or the image of $f$ is $T$.
\end{fact}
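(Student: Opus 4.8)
The plan is to construct $f$ directly by transfinite recursion on the well-ordered set $S$. For $s\in S$ write $S_{<s}:=\{s'\in S: s'\prec s\}$ and $S_{\le s}:=S_{<s}\cup\{s\}$ for the corresponding initial segments. Suppose inductively that $f$ has been defined on $S_{<s}$ in such a way that $f(S_{<s})$ is an initial segment of $T$. If $f(S_{<s})=T$, leave $f$ undefined at $s$ and at every element above $s$; otherwise $T\setminus f(S_{<s})$ is a nonempty subset of the well-ordered set $T$, hence has a least element, and we declare $f(s)$ to be that element. Let $\dom(f)$ be the set of those $s\in S$ at which $f$ ends up defined; by construction this set is downward closed, hence an initial segment of $S$.

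Next I would record the two elementary properties of initial segments that make everything go through: (a) if $I$ is an initial segment of a linear order $L$ then $L\setminus I$ is upward closed, so in a well-order $\min(L\setminus I)$ exists and $I\cup\{\min(L\setminus I)\}$ is again an initial segment; and (b) a union of a chain (under inclusion) of initial segments of $L$ is an initial segment of $L$. Property (a) shows, by the same induction that drives the recursion, that $f(S_{<s})$ really is an initial segment of $T$ at every stage, so the construction is well posed; and since $f(S_{\le s})=f(S_{<s})\cup\{\min(T\setminus f(S_{<s}))\}$ is an initial segment for each $s\in\dom(f)$, property (b) gives that $\im(f)=\bigcup_{s\in\dom(f)}f(S_{\le s})$ is an initial segment of $T$, and more generally that $f$ maps every initial segment of $S$ contained in $\dom(f)$ onto an initial segment of $T$. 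Order-preservation is immediate: if $s'\prec s$ with both in $\dom(f)$ then $f(s')\in f(S_{<s})$ whereas $f(s)=\min(T\setminus f(S_{<s}))\notin f(S_{<s})$, and any element of an initial segment precedes every element outside it, so $f(s')\prec f(s)$; in particular $f$ is injective and so is an order isomorphism onto $\im(f)$.

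It then remains to establish the dichotomy that $\dom(f)=S$ or $\im(f)=T$. Suppose $\dom(f)\ne S$ and let $s$ be the $\prec$-least element of $S\setminus\dom(f)$. Then $f$ is defined on all of $S_{<s}$, so the only way the recursion could have failed to define $f(s)$ is that $f(S_{<s})=T$; but then $T=f(S_{<s})\subseteq\im(f)\subseteq T$, forcing $\im(f)=T$, as required. I do not expect a genuine obstacle here: this is the classical comparability of well-orderings, and the closest thing to a difficulty is simply the bookkeeping in (a) and (b) above. For brevity one could instead obtain $f$ as a maximal element (Zorn's lemma) of the poset of order isomorphisms between an initial segment of $S$ and an initial segment of $T$, ordered by extension, or invoke the trichotomy for ordinals; I would nonetheless favour the explicit recursion since it produces an $f$ with all the asserted properties at once.
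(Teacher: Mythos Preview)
Your proof is correct: it is the classical transfinite-recursion construction underlying the comparability theorem for well-orderings, and the bookkeeping you flag in points (a) and (b) is exactly what is needed to make the induction go through. Note, however, that the paper does not actually supply a proof of this statement; it is recorded as a \emph{Fact} and left unproved, as a standard piece of background on well-orders. So there is no ``paper's own proof'' to compare against here --- your argument simply fills in a result the authors take for granted.
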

\begin{fact}\label{orders2} If $S$ is well-ordered, then each $x\in S$ is exactly one of the following:
\begin{itemize}
\item[(i)] the `0' element, which is $\min S$;
\item[(ii)] a successor element, if $\{s\in S : s<x\}$ has a maximum element; or
\item[(iii)] a limit point, if $x\ne0$ and $\{s\in S : s<x\}$ has no maximum element.
\end{itemize}
\end{fact}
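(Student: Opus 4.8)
The plan is to prove the trichotomy by a direct case split on the set of strict predecessors of $x$. Fix $x\in S$ and set $P:=\{s\in S:s<x\}$. Every subset of a linearly ordered set satisfies exactly one of the following: it is empty; it is nonempty and has a maximum element; it is nonempty and has no maximum element. So it suffices to match these three alternatives with conditions (i), (ii), (iii) and to check that this matching is a correspondence and that the three cases are pairwise exclusive.

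First I would treat $P=\emptyset$: this says $x\le s$ for all $s\in S$, i.e. $x=\min S$, which is case (i); conversely $x=\min S$ forces $P=\emptyset$. Next, if $P$ is nonempty with maximum $m$, then $m<x$ and no element lies strictly between $m$ and $x$ (such an element would belong to $P$ and strictly exceed $m$), so $x$ is the immediate successor of $m$; the defining clause ``$P$ has a maximum'' is exactly the condition stated in (ii). Finally, if $P$ is nonempty with no maximum, then $P\ne\emptyset$ gives $x\ne\min S$, hence $x\ne0$, and ``$P$ has no maximum'' is precisely the condition in (iii). Mutual exclusivity is then immediate: (i) forces $P=\emptyset$, so (ii) fails (the empty set has no maximum) and (iii) fails (because $x=0$); and (ii) and (iii) are mutually exclusive once $P$ is known to be nonempty, since a set either does or does not have a maximum.

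I do not expect any real obstacle here; the only point needing a moment's care is the boundary between (i) and (iii), which is handled by the clause ``$x\ne0$'' deliberately built into the statement of (iii). It is worth remarking that well-orderedness of $S$ is not actually used in this fact — the statement holds verbatim for any linearly ordered set — whereas well-orderedness is genuinely needed for the companion Fact~\ref{orders1}, where one builds the isomorphism onto initial segments by transfinite recursion.
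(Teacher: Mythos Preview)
Your argument is correct. The paper does not prove this statement at all: it is labelled a \emph{Fact} and introduced as one of ``the following well-known results about orderings,'' so there is no proof in the paper to compare against. Your case split on the set $P$ of strict predecessors is the natural and standard way to see the trichotomy, and your handling of mutual exclusivity (in particular that $P=\emptyset$ has no maximum, so (i) and (ii) cannot overlap) is clean.

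One small refinement to your closing remark: well-orderedness is not entirely idle in the statement as written, since the phrase ``the `0' element, which is $\min S$'' presupposes that $\min S$ exists. In an arbitrary linear order without a least element case~(i) would simply be vacuous and the trichotomy would collapse to a dichotomy between successors and limits; but the statement as phrased does lean on $S$ having a minimum, which well-orderedness supplies. Your broader point---that the argument uses only the linear order and not the full strength of well-foundedness---is nonetheless correct.
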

Note that $\Ra_n$ with the lexicographic order has `0' element $(1, 0)$ and exactly $n-1$ limit points (which are the points $(2, 0), \ldots, (n, 0)$).
\begin{lemma}\label{lem:orderiso1}
	Let $n\in \{1, 2, \ldots\}$, $\Ra_n$ be the ray system and $F$ be a finite subset of $\Ra_n$. Then $\Ra_n-F$ is order isomorphic to $\Ra_n$. 
\end{lemma}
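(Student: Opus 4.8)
The plan is to exploit the block structure of the lexicographic order. Writing $R_i=\{i\}\times\N$ and $F_i:=F\cap R_i$, each $F_i$ is finite, and as a set $\Ra_n-F=\bigsqcup_{i=1}^n(R_i-F_i)$. Since $\lex$ compares the ray index first, the order that $\lex$ induces on $\Ra_n-F$ is precisely the ordered concatenation of the orders on $R_1-F_1,\dots,R_n-F_n$, taken in that order. So it suffices to understand one ray and then glue.

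First I would record the one-ray case: $R_i$ has order type $\omega$, and $R_i-F_i$ is an infinite subset of it (infinite because $F$ is finite), hence also of order type $\omega$; concretely, listing the elements of $R_i-F_i$ in increasing order as $x^{(i)}_0\lex x^{(i)}_1\lex\cdots$ defines an order isomorphism $f_i\colon R_i-F_i\to R_i$ by $x^{(i)}_m\mapsto(i,m)$. This already settles the case $n=1$.

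Then I would glue: define $f\colon\Ra_n-F\to\Ra_n$ by $f|_{R_i-F_i}=f_i$ for each $i$. It is a bijection because each $f_i$ is, and it is order preserving: given $(i,m)\lex(i',m')$, if $i<i'$ then $f(i,m)\in R_i$ and $f(i',m')\in R_{i'}$ so $f(i,m)\lex f(i',m')$, while if $i=i'$ the claim is just that $f_i$ preserves order. Thus $f$ is the required order isomorphism; equivalently, $\Ra_n-F$ has order type $\underbrace{\omega+\cdots+\omega}_{n}=\omega n$, the order type of $\Ra_n$, and well-ordered sets are determined up to order isomorphism by their order type (cf. Fact~\ref{orders1}).

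I do not expect a genuine obstacle here. The only two points needing a line of justification are that deleting a finite subset from an $\omega$-chain leaves an $\omega$-chain, and that an order isomorphism of a finite concatenation of chains can be assembled block by block from order isomorphisms of the blocks; both are immediate from the definitions. The one thing to note is that no special care is needed when $F$ contains the minimum $(1,0)$ or one of the limit points $(i,0)$, since the argument only uses that each $R_i-F_i$ remains infinite.
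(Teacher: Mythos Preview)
Your proof is correct and follows essentially the same approach as the paper's: decompose $\Ra_n-F$ ray by ray, observe that each $R_i-F_i$ is order isomorphic to $R_i$, and glue the isomorphisms together. The paper's proof is just a two-sentence sketch of exactly this argument, whereas you have spelled out the details (the explicit enumeration of each $R_i-F_i$, the verification that the glued map preserves $\lex$, and the order-type viewpoint).
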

\begin{proof}
Let $i\in \{1,\ldots, n\}$. Then there is an order isomorphism from $R_i$, the $i$th ray of $\Ra_n$, to $R_i-F$. Combine these for an order isomorphism between $\Ra_n$ and $\Ra_n-F$.
\end{proof}
We now focus on permutations, rather than just sets.
\begin{lemma}\label{H1orderiso} Let $f: \N\setminus F_1\to \N\setminus F_2$ be an order isomorphism. Then there exists $n_f\in \N$ and $t(f)\in \Z$ such that $f(n)=n+t(f)$ for all $n\ge n_f$.
\end{lemma}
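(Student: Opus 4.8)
The plan is to exploit that $\N\setminus F_1$ and $\N\setminus F_2$ are both infinite subsets of $\N$ and hence well-ordered of order type $\omega$; by Fact~\ref{orders1} (uniqueness of order isomorphisms between well-ordered sets) the map $f$ is uniquely determined, and being a bijection onto an infinite subset of $\N$ it must satisfy $f(m)\to\infty$ as $m\to\infty$. The key local observation is that sufficiently far out both sets ``look like $\N$'': once $n>\max F_1$ the number $n+1$ is the immediate successor of $n$ inside $\N\setminus F_1$, and once $m>\max F_2$ the immediate successor of $m$ inside $\N\setminus F_2$ is simply $m+1$. This is the single-ray version of the phenomenon underlying the description of $H_n$ as a group of almost order preserving bijections.

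Concretely, I would first set $n_0:=\max F_1+1$, with the convention $n_0:=0$ when $F_1=\emptyset$. Using $f(m)\to\infty$, choose $n_f\ge n_0$ large enough that $f(m)>x$ for every $x\in F_2$ and every $m\ge n_f$ (a vacuous condition if $F_2=\emptyset$). Now fix any $n\ge n_f$. Then $n,n+1\in\N\setminus F_1$ and there is no element of $\N\setminus F_1$ strictly between them, so $n+1$ is the successor of $n$ in the domain; since $f$ is an order isomorphism it preserves the successor relation, so $f(n+1)$ is the successor of $f(n)$ in $\N\setminus F_2$; and since $f(n)>\max F_2$, we have $f(n)+1\notin F_2$, so that successor equals $f(n)+1$. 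Hence $f(n+1)-(n+1)=f(n)-n$ for all $n\ge n_f$, which shows that the integer $f(n)-n$ does not depend on $n\ge n_f$. Setting $t(f)$ to be this common value completes the argument.

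I expect no genuine obstacle here: the only points needing a word of care are the degenerate cases $F_1=\emptyset$ or $F_2=\emptyset$, handled by the conventions above, and the single non-finitary input $f(m)\to\infty$, which holds because $f$ is injective with infinite image inside $\N$. As an alternative one could argue directly with ranks — for $n>\max F_1$ the element $n$ has rank $n-|F_1|$ in $\N\setminus F_1$, and an element of that large a rank in $\N\setminus F_2$ equals its rank plus $|F_2|$, giving at once that $t(f)=|F_2|-|F_1|$ — but the successor argument avoids this bookkeeping and makes the threshold $n_f$ transparent.
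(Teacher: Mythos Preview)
Your proposal is correct and takes essentially the same approach as the paper's proof: both arguments pick a threshold beyond which the domain and codomain coincide with tails of $\N$, then use that an order isomorphism preserves successors to conclude $f(n+1)=f(n)+1$ and hence $f(n)-n$ is eventually constant. The only cosmetic difference is that the paper names the threshold explicitly as $l=\max\{\sup(F_1)+1,\,f^{-1}(\sup(F_2)+1)\}$, whereas you invoke $f(m)\to\infty$ to guarantee that $f$ eventually lands above $\max F_2$; both are fine.
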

\begin{proof} Let $l=\max\{\sup(F_1)+1, f^{-1}(\sup(F_2)+1)\}$. Fix $k\in \N$. Then the successor of $l+k$ in $\N\setminus F_1$ is $l+k+1$ and the successor of $f(l)+k$ in $\N\setminus F_2$ is $f(l)+k+1$. Thus, for any $r\in \N$, we have that $f(l+r)=f(l)+r$.
\end{proof}
The Houghton groups can then be seen as the group of almost order preserving bijections.
\begin{proposition}\label{aopisHou}
Let $n\in \{1, 2, \ldots\}$. Then the subgroup of $\Sym(\Ra_n)$ of almost order preserving elements (with respect to the lexicographical order) is $H_n$.
\end{proposition}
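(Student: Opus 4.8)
The plan is to prove the two inclusions separately; the inclusion of $H_n$ into the group of almost order preserving bijections is routine, and the substance is in the reverse inclusion. For the easy direction, given $g\in H_n$ I would enlarge the constant $n_g$ of the definition so that in addition $n_g\ge\max_j|t_j(g)|$, and set $F=\{(j,\ell):\ell<n_g\}$, a finite set. For $(j,\ell)\lex(j',\ell')$ with both points outside $F$ we have $(j,\ell)g=(j,\ell+t_j(g))$ and $(j',\ell')g=(j',\ell'+t_{j'}(g))$, which are legitimate points of $\Ra_n$ by the choice of $n_g$, and one reads off from the definition of $\lex$ (treating $j<j'$ and $j=j'$ separately) that these remain in increasing order. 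Hence $g$ is almost order preserving with exceptional set $F$.

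For the converse, let $g\in\Sym(\Ra_n)$ be almost order preserving with finite exceptional set $F$. First I would observe that $g$ restricts to a bijection $\phi\colon\Ra_n\setminus F\to\Ra_n\setminus g(F)$ which is in fact an order isomorphism: it is order preserving by hypothesis, and its inverse is order preserving because $g$ is injective (if $g(s)\lex g(s')$ but not $s\lex s'$, then $s'\lex s$ forces $g(s')\lex g(s)$). Both $\Ra_n\setminus F$ and $\Ra_n\setminus g(F)$ are well ordered of order type $\omega n$; writing $F_j=\{\ell:(j,\ell)\in F\}$ and $F'_j=\{\ell:(j,\ell)\in g(F)\}$, which are finite, they decompose as $\bigsqcup_{j=1}^n\{j\}\times(\N\setminus F_j)$ and $\bigsqcup_{j=1}^n\{j\}\times(\N\setminus F'_j)$, each summand of order type $\omega$. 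The key step is to identify $\phi$ explicitly: assembling, for each $j$, the unique order isomorphism $\{j\}\times(\N\setminus F_j)\to\{j\}\times(\N\setminus F'_j)$ yields an order isomorphism $\Ra_n\setminus F\to\Ra_n\setminus g(F)$, and since an order isomorphism between well ordered sets is unique, this assembled ``block-diagonal'' map must coincide with $\phi$. (Equivalently, by Fact~\ref{orders2} the isomorphism $\phi$ must carry the $n-1$ limit points of $\Ra_n\setminus F$ to the $n-1$ limit points of $\Ra_n\setminus g(F)$ in order, hence must carry the $j$th block onto the $j$th block.)

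It then remains to restrict $\phi$ to the $j$th block: this is an order isomorphism $\N\setminus F_j\to\N\setminus F'_j$, so by Lemma~\ref{H1orderiso} there exist $m_j\in\N$ and $t_j\in\Z$ with $(j,\ell)g=(j,\ell+t_j)$ for all $\ell\ge m_j$; choosing $n_g$ larger than every $m_j$ and also large enough that $\ell\ge n_g$ implies $(j,\ell)\notin F$ exhibits $g$ as an element of $H_n$. (For $n=1$, surjectivity and injectivity of $g$ force $t_1=0$, so $g\in\FSym(\Ra_1)=H_1$, in agreement with the definition.) I expect the only genuinely delicate point to be the block-diagonal description of $\phi$ in the key step — that is, the fact that being almost order preserving forces $g$ to stabilise each ray setwise from some point on; the remaining arguments are bookkeeping with finite sets.
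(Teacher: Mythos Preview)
Your proof is correct and follows the same overall arc as the paper's: restrict $g$ to an order isomorphism off a finite set, argue that this isomorphism must carry each ray to itself, and then apply Lemma~\ref{H1orderiso} ray by ray. The one point of difference is how you establish ray-preservation. You invoke the uniqueness of order isomorphisms between well-ordered sets (equivalently, preservation of limit points), which settles the matter in one stroke. The paper instead argues directly by contradiction: if some point of $(\Ra_n\setminus F)\cap R_1$ mapped to a higher ray then, by order-preservation, so would every later point, leaving only finitely many points of $\Ra_n$ mapping into $R_1$ and contradicting surjectivity; it then iterates through the remaining rays. Your argument is cleaner and sidesteps the small amount of care needed to make the paper's iteration work for $j\ge2$, while the paper's version is more self-contained in that it does not appeal to the general uniqueness principle for well-ordered sets.
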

\begin{proof}
Throughout, when we say `order' we will mean the lexicographical order. If $h\in H_n$, then $h$ is almost order preserving. Now take $g\in \Sym(\Ra_n)$ that is almost order preserving on $\Ra_n$. Thus there are finite sets $F_1$ and $F_2$ such that $g$ is an order isomorphism from $\Ra_n\setminus F_1$ to $\Ra_n\setminus F_2$. If $n=1$, then $|F_1|=|F_2|$ and the quantity $t(g)$ in Lemma \ref{H1orderiso} must be zero, meaning that $g\in \FSym$. Now let $n\ge 2$. Assume $g(1, m)\not\in (\Ra_n\setminus F_2)\cap R_1$ for some $m\in \N$. Then $g(1, m)=(i, m')$ for some $i\in\{2, \ldots, n\}$. Since $g$ is order preserving, this means for all $k\in \N$ that $g(1, m+k)\not\in R_1$, contradicting that $g$ is surjective. Thus $g$ induces an order isomorphism from $(\Ra_n\setminus F_1)\cap R_1$ to $(\Ra_n\setminus F_2)\cap R_1$. Running the same argument for each $j=2, \ldots, n$ shows that $g$ induces an order isomorphism from $(\Ra_n\setminus F_1)\cap R_j$ to $(\Ra_n\setminus F_2)\cap R_j$. Lemma \ref{H1orderiso} then yields the result.
\end{proof}

\begin{remark} \label{point stab}
Note that Lemma~\ref{lem:orderiso1} and Proposition~\ref{aopisHou} imply that a point stabiliser in $H_n$ is an infinite index subgroup isomorphic to $H_n$. Since a point stabiliser is isomorphic to $H_n$, it has the same Hirsch length and is therefore an infinite index subgroup of full Hirsch length. 	
\end{remark}

\section{Elementary amenable groups and Rational Homological Dimension}

The goal of this section is to establish Theorem \ref{NoFPn}. 

The notion of rational homological dimension applies to any group. If $G$ is a group then its rational homological dimension is denoted by $\hd_\Q(G)$. 
Stammbach \cite{Stammbach} refers to \emph{weak (homological) dimension} of the group algebra $KG$ where $K$ is any field of characteristic zero and denotes this by $\mathrm{w}.\dim(KG)$. It is easy to show that for any field $K$, $\mathrm{w}.\dim(KG) = \hd_K(G)$. Moreover, for any field $K$ of characteristic zero $\hd_K(G)=\hd_\Q(G)$. 
In \cite{Stammbach}, Stammbach fixes on the class of groups admitting a subnormal series in which the factors are locally finite or abelian. 
In the language of homological dimension, the main result of Stammbach's paper \cite{Stammbach} says that for a group $G$ in this class we have $h(G)=\hd_\Q(G)$. This covers the cases of interest to us because Houghton's groups are (locally finite)-by-abelian and are covered by Stammbach's paper. Nevertheless, it is worth remarking that the notion of Hirsch length is extended by Hillman and Linnell to encompass all elementary amenable groups, and the following overall picture emerges.

\begin{theorem}[The Hillman--Linnell--Stammbach Theorem \cite{HL,Stammbach}]
Let $G$ be an elementary amenable group. Then both Hirsch length and rational homological dimension are defined and equal. 
\end{theorem}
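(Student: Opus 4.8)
The plan is to verify that the two invariants agree on the basic building blocks of the class of elementary amenable groups, to check that this agreement is preserved by the closure operations (extensions and directed unions) that generate the class, and then to invoke the structure theory of Hillman--Linnell to reduce an arbitrary elementary amenable group to the situation already handled by Stammbach. The properties of rational homological dimension that make this work are three. First, $\hd_\Q$ is monotone under passage to subgroups: for $H\le G$ the ring $\Q G$ is free, hence flat, as a $\Q H$-module, so a flat $\Q G$-resolution of the trivial module restricts to a flat $\Q H$-resolution, giving $\hd_\Q(H)\le\hd_\Q(G)$. Second, $\hd_\Q(G)\le\hd_\Q(N)+\hd_\Q(Q)$ for any extension $1\to N\to G\to Q\to1$, by the Lyndon--Hochschild--Serre spectral sequence. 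Third, $\hd_\Q(\varinjlim_i G_i)=\sup_i\hd_\Q(G_i)$, because group homology commutes with directed colimits of groups. The matching properties of Hirsch length --- vanishing on locally finite groups, exact additivity $h(G)=h(N)+h(Q)$ on extensions, and $h(\varinjlim_i G_i)=\sup_i h(G_i)$ --- are immediate from its definition.

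On the building blocks the two numbers coincide, and moreover the extension inequality for $\hd_\Q$ becomes an equality in the cases we need. A finite group has semisimple rational group algebra, so $\hd_\Q=0=h$; applying the directed-union formula, the same holds for every locally finite group. A free abelian group of rank $r$ has $\hd_\Q=r$ (using the identification of $\hd_\Q(G)$ with the weak homological dimension of $\Q G$ recalled above, together with the fact that a Laurent polynomial extension raises weak dimension by exactly one), and an arbitrary abelian group $A$ contains a free abelian subgroup $F$ whose rank is the torsion-free rank of $A$ and with $A/F$ torsion, hence locally finite; thus $\hd_\Q(A)$ equals the torsion-free rank of $A$, which is $h(A)$. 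The same twisted-Laurent computation gives $\hd_\Q(N\rtimes\Z)=\hd_\Q(N)+1$ exactly; iterating, any group possessing a finite subnormal series whose factors are infinite cyclic or locally finite has $\hd_\Q$ equal to the number of infinite cyclic factors, i.e.\ to its Hirsch length. Since every group in Stammbach's class --- a finite subnormal series with locally finite or abelian factors --- admits such a refined series (replace each abelian factor by a free-abelian section followed by a locally finite section), this is exactly Stammbach's theorem \cite{Stammbach}: $h(G)=\hd_\Q(G)$ for all such $G$.

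It remains to pass to all elementary amenable groups. If $h(G)<\infty$, the Hillman--Linnell structure theorem \cite{HL} produces a characteristic locally finite subgroup $\tau(G)$ whose quotient is virtually (torsion-free solvable) of the same Hirsch length; a finite-index solvable normal subgroup of $G/\tau(G)$ has, via its derived series, a finite subnormal series with abelian factors, and pulling this back to $G$ with the finite quotient on top and $\tau(G)$ at the bottom places $G$ in Stammbach's class. Hence $\hd_\Q(G)=h(G)$. If $h(G)=\infty$, then since $h$ is the supremum of the Hirsch lengths of the finitely generated subgroups, the Hillman--Linnell analysis shows $G$ contains elementary amenable subgroups of arbitrarily large finite Hirsch length; each such subgroup has $\hd_\Q$ equal to its Hirsch length by the previous case, and subgroup-monotonicity gives $\hd_\Q(G)=\infty$. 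In all cases both quantities are defined and equal.

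The principal obstacle is the input from \cite{HL}: the structure theorem asserting that an elementary amenable group of finite Hirsch length is, up to a locally finite kernel and a finite-index correction, poly-(torsion-free abelian). One layer deeper, the genuinely homological point --- already present in \cite{Stammbach} --- is the sharpness of the extension estimate, i.e.\ the non-vanishing of top-degree homology for a twisted Laurent extension $\Q(N\rtimes\Z)$. This sharpness cannot be extracted from subgroup-monotonicity alone, since an elementary amenable group of Hirsch length $r$ need not contain a copy of $\Z^r$ --- for instance $\Z[1/2]\rtimes\Z$ has Hirsch length $2$ but no $\Z^2$.
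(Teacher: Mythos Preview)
The paper does not give its own proof of this theorem; it is stated as a named background result with citations to \cite{HL} and \cite{Stammbach}, and the surrounding paragraph explains only that Stammbach's original argument already covers the (locally finite)-by-abelian case relevant to Houghton's groups, with Hillman--Linnell supplying the extension to all elementary amenable groups. There is therefore no proof in the paper to compare your argument against.

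That said, your sketch is a faithful reconstruction of how the two cited sources combine, and the logic is sound. Two places deserve a little more care. First, the passage from ``$\hd_\Q(N\rtimes\Z)=\hd_\Q(N)+1$'' to the claim for an arbitrary subnormal series with infinite-cyclic-or-locally-finite factors also needs the locally finite step, namely that an extension with locally finite quotient preserves $\hd_\Q$; this follows from the spectral-sequence inequality together with subgroup monotonicity, both of which you already recorded, so the gap is only expository. Second, the infinite-Hirsch-length case relies on the fact that Hirsch length of an elementary amenable group is the supremum over its finitely generated subgroups; this is part of the Hillman--Linnell framework, but it would be cleaner to name it as an input rather than fold it into the phrase ``the Hillman--Linnell analysis shows''.
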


The following lemma will be useful.

\begin{lemma}\label{finite-by-abelian_groups}
Let $G$ be a finitely generated group that has a finite normal subgroup $F$ such that $G/F$ is abelian. Then the centre of $G$ has finite index in $G$.
\end{lemma}
\begin{proof}
For readers' convenience we include details. 
Since $G/F$ is abelian it follows that every commutator $[g,h]$ belongs to $F$ and hence there are only finitely many commutators. Now for $g\in G$, we have $g^h=g[g,h]$ for any $h$ and hence $g$ has only finitely many conjugates. Hence $C_G(g)$ has finite index in $G$, the index of the centraliser being equal to the size of the conjugacy class. Now if $G$ is generated by $g_1,\dots,g_d$ then the intersection of the centralisers 
$\bigcap_{i=1}^dC_G(g_i)$ is equal to the centre of $G$ and has finite index.
\end{proof}

We shall also need the following important result of Leary and Nucinkis \cite{Leary2001}.

\begin{theorem}[Leary--Nucinkis]\label{L--N}
Let $G$ be a group of type $\FP_n$ where $n>\hd_\Q(G)$. Then there is a bound on the orders of the finite subgroups of $G$.
\end{theorem}

\NoFPn

\begin{proof}[Proof of Theorem \ref{NoFPn}]
The implications (iv)$\implies$(iii)$\implies$(ii)$\implies$(i) are clear. 
If (v) holds then Lemma \ref{finite-by-abelian_groups} shows that $G$ is abelian-by-finite and St. John-Green's analysis \cite[Theorem 3.4]{Simon} confirms (iv).
Now assume that (i) holds. Then the hypotheses of Theorem \ref{L--N} are satisfied and so there is a bound on the orders of the finite subgroups of $G$. Since $\gfin$ is locally finite, it is finite, and (v) holds. 

When the conditions (i)--(v) all hold we have $h(G)=\lfloor\frac{n}{2}\rfloor<n$ since $n\ge1$. In fact $h(G)=\lfloor\frac{n}{2}\rfloor<n-1=h(H_n)$ whenever $n>2$, confirming the last sentence of
Theorem \ref{NoFPn}.
\end{proof}

\section{Subgroups of full Hirsch length}\label{sec:fullhirsch}

Recall that a group has Hirsch length $m$ if it admits a subnormal series whose factors are locally finite or cyclic and exactly $m$ of those factors are infinite cyclic. 

Also, given a subgroup $G \leq H_n$, recall that $\gfin$ denotes $G \cap \FSym(\Ra_n)$. It is straightforward to see that  $\gfin\normal G$, since it is the intersection of $G$ with a normal subgroup of $H_n$. In fact it is characteristic in $G$, since it consists of all the finite order elements of $G$. 

Moreover, by the Second Isomorphism Theorem, 
$$G/\gfin \cong G \FSym(\Ra_n) / \FSym(\Ra_n) \leq H_n / \FSym(\Ra_n) \cong \Z^{n-1}.$$ 

It follows that the Hirsch length of $G$ is $n-1$ precisely when $G/\gfin$ is isomorphic to $\Z^{n-1}$. In fact, this is equivalent to saying that the image of $G$ in the abelianisation of $H_n$ has finite index, since $\FSym(\Ra_n)$ is the derived subgroup of $H_n$ for $n \geq 3$ and $\Alt(\Ra_2)$ is the derived subgroup of $H_2$. We note this observation for later use.

\begin{lemma}\label{lem:structureofkfin}
Let $n\in \{2, 3,\ldots\}$, and $G \le H_n$. Then the following are equivalent,
\begin{itemize}
	\item $h(G)=n-1$, 
	\item $G/\gfin\cong\Z^{n-1}$, 
	\item The image of $G$ in the abelianisation of $H_n$ is a finite index subgroup. 
\end{itemize}
\end{lemma}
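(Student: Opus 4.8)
The plan is to prove the three-way equivalence by establishing a cycle of implications, leaning on the structural facts already assembled in this section. The key preliminary observations are all in place: $\gfin = G\cap\FSym(\Ra_n)$ is normal (indeed characteristic) in $G$, the Second Isomorphism Theorem gives $G/\gfin \cong G\FSym(\Ra_n)/\FSym(\Ra_n) \le H_n/\FSym(\Ra_n)\cong\Z^{n-1}$, and $\FSym(\Ra_n)$ is precisely the derived subgroup of $H_n$, so that $H_n/\FSym(\Ra_n)$ is the abelianisation of $H_n$.

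First I would record the equivalence of the second and third bullets directly from this last identification: under the isomorphism $H_n^{\mathrm{ab}} = H_n/\FSym(\Ra_n)\cong\Z^{n-1}$, the image of $G$ in the abelianisation is exactly the subgroup $G\FSym(\Ra_n)/\FSym(\Ra_n)\cong G/\gfin$. A subgroup of $\Z^{n-1}$ has finite index if and only if it has rank $n-1$, and since $G/\gfin$ embeds in $\Z^{n-1}$ it is itself free abelian of some rank $r\le n-1$; it has finite index in $\Z^{n-1}$ precisely when $r = n-1$, i.e.\ when $G/\gfin\cong\Z^{n-1}$. This settles (second bullet) $\iff$ (third bullet).

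Next I would connect these to the Hirsch length. For the implication $G/\gfin\cong\Z^{n-1}\implies h(G)=n-1$: since $\gfin$ is locally finite (being a subgroup of $\FSym(\Ra_n)$), the extension $1\to\gfin\to G\to\Z^{n-1}\to 1$ exhibits a subnormal series for $G$ with one locally finite factor and $n-1$ infinite cyclic factors (refine $\Z^{n-1}$ through $\Z^{n-2}, \ldots, \Z, 1$), so by the definition of Hirsch length $h(G) = n-1$. Conversely, for $h(G)=n-1\implies G/\gfin\cong\Z^{n-1}$: the quotient $G/\gfin$ is a subgroup of $\Z^{n-1}$, hence free abelian of rank $r\le n-1$, so $h(G/\gfin) = r$; since $\gfin$ is locally finite, $h(\gfin) = 0$, and by additivity of Hirsch length across the extension $1\to\gfin\to G\to G/\gfin\to 1$ we get $h(G) = h(\gfin) + h(G/\gfin) = r$. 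Thus $h(G)=n-1$ forces $r=n-1$, i.e.\ $G/\gfin\cong\Z^{n-1}$.

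The main thing to be careful about — though it is standard rather than genuinely hard — is invoking additivity of Hirsch length under extensions and the fact that a locally finite group has Hirsch length $0$; these are part of the Hillman--Linnell framework cited earlier, so I would simply reference that and the explicit definition of Hirsch length via subnormal series given at the start of the section. No deep input is needed beyond the identification of $\FSym(\Ra_n)$ with the derived subgroup of $H_n$ and the elementary structure theory of finitely generated abelian groups; the lemma is essentially a bookkeeping consequence of the paragraphs immediately preceding it.
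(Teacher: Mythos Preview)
Your proposal is correct and follows essentially the same approach as the paper. In fact the paper does not give a formal proof of this lemma at all: it simply records, in the paragraph immediately preceding the statement, the Second Isomorphism Theorem identification $G/\gfin \cong G\FSym(\Ra_n)/\FSym(\Ra_n) \le H_n/\FSym(\Ra_n)\cong\Z^{n-1}$, notes that $\FSym(\Ra_n)$ is the derived subgroup, and declares the equivalences to follow; your write-up is a careful unpacking of precisely those assertions.
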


\begin{remark} \label{full point stabs} 
	In particular, the second condition is useful since $\gfin$ consists of all the torsion elements of $G$. 
\end{remark}

Next we move on to discussing the action of a subgroup of $H_n$ on the ray system, $\Ra_n$. 

\begin{lemma}
	\label{inforbs}
Let $n\in \{2, 3,\ldots\}$ and $G\le H_n$ have full Hirsch length, and consider the action of $G$ on the ray system $\Ra_n$. Then every infinite orbit of $G$ meets each ray in an infinite set.
\end{lemma}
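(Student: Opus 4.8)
The plan is to exploit the eventual-translation structure of elements of $H_n$ together with the hypothesis that $G$ has full Hirsch length, i.e.\ (by Lemma~\ref{lem:structureofkfin}) that the image of $G$ under $t_*$ is a finite-index subgroup of the zero-sum lattice $\{(m_1,\dots,m_n)\in\Z^n : \sum m_i=0\}\cong\Z^{n-1}$. Fix an infinite orbit $\mathcal{O}$ of $G$ and a ray $R_j$; the goal is to show $\mathcal{O}\cap R_j$ is infinite.

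First I would observe that, since $\mathcal{O}$ is infinite and $\Ra_n$ is the union of the $n$ rays, $\mathcal{O}$ meets at least one ray, say $R_i$, in an infinite set. So it suffices to show that if $\mathcal{O}$ meets $R_i$ infinitely then it meets $R_j$ infinitely, for every $j$; by transitivity of this relation it is enough to handle one $i$ at a time, and in fact to show that the set $J$ of rays met infinitely by $\mathcal{O}$ is all of $\{1,\dots,n\}$. The key step is to produce, for a given $j$, an element $g\in G$ whose translation vector $t_*(g)$ has $t_j(g)\ne 0$ while $t_i(g)$ has the opposite sign (or more simply: with $t_i(g)>0$ and $t_j(g)<0$, so that $g$ pushes points out along $R_i$ and pulls points in from deep in $R_j$ — actually I want the reverse: an element moving points from $R_i$ far out into $R_j$). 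Such a $g$ exists because the image $t_*(G)$ is finite-index in the zero-sum lattice, hence contains a vector with $i$th coordinate negative and $j$th coordinate positive (take a suitable multiple of $e_j-e_i$, which lies in $t_*(G)$ since it lies in the zero-sum lattice and $t_*(G)$ has finite index). Then, since $\mathcal{O}\cap R_i$ is infinite, pick a point $x=(i,\ell)\in\mathcal{O}$ with $\ell$ larger than the threshold $n_g$ past which $g$ acts as the pure translation $t_*(g)$; applying powers $g, g^2, g^3,\dots$ — or rather noting that $(i,\ell)g$, $(i,\ell')g$ for a sequence $\ell<\ell'<\cdots$ of such points in $\mathcal{O}\cap R_i$ — lands in $R_j$ at depths tending to infinity, since $t_i(g)<0$ eventually moves these points off $R_i$ onto $R_j$ with $j$-coordinate growing. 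More carefully: for each of the infinitely many $x\in\mathcal{O}\cap R_i$ beyond the threshold, some bounded power of $g$ carries $x$ into $R_j$, and distinct $x$ give distinct images (as $g$ is a bijection), with images exhausting arbitrarily large depths in $R_j$; hence $\mathcal{O}\cap R_j$ is infinite.

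I expect the main obstacle to be the bookkeeping around thresholds and which power of $g$ to use: an element with $t_i(g)<0<t_j(g)$ eventually decreases the $R_i$-coordinate, but a single point may take a varying number of steps to cross over from $R_i$ to $R_j$, and once it has crossed one must check it does not wander back. The clean way around this is to choose $g$ with $t_k(g)=0$ for all $k\notin\{i,j\}$, i.e.\ $t_*(g)=d(e_j-e_i)$ for a suitable $d>0$, and to work with the combined effect on the two-ray subsystem $R_i\cup R_j$, where $g$ behaves eventually like the Houghton-style shift moving $R_i$ toward $R_j$; on a point $(i,\ell)$ with $\ell$ large, the sequence of coordinates under $g,g^2,\dots$ decreases in the $R_i$-part until it crosses into $R_j$ and thereafter strictly increases its $R_j$-depth, so the orbit of $(i,\ell)$ under $\langle g\rangle$ eventually sits in $R_j$ at depth $\to\infty$. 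Taking one such witness in $R_j$ for each of infinitely many starting points $(i,\ell)\in\mathcal{O}\cap R_i$ and using injectivity of the group action then finishes the argument. A final routine check is that ``$\mathcal{O}$ meets at least one ray infinitely'' together with ``infinitely-met is independent of the ray'' really does give ``$\mathcal{O}$ meets every ray infinitely,'' which is immediate.
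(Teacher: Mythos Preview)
Your proposal is correct and follows essentially the same route as the paper: use full Hirsch length to produce $g\in G$ with $t_*(g)=d(e_j-e_i)$, then iterate $g$ on a point of $\mathcal{O}\cap R_i$ to obtain infinitely many points of $\mathcal{O}$ in $R_j$. The paper streamlines the ``bookkeeping around thresholds'' you flag by invoking the disjoint-cycle decomposition (Lemma~\ref{3.1}): it picks a single $u\in\mathcal{O}\cap R_i$ lying in an \emph{infinite} cycle of $g$, so that $\{ug^m:m\ge z\}\subseteq R_j$ for large $z$ is automatic, rather than tracking many starting points or worrying about the non-translation region; your ``pick $\ell$ large'' accomplishes the same thing since the finite cycles of $g$ have bounded support.
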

\begin{proof}
Let $v \in \Ra_n$ and suppose that $vG$ is infinite. Then it must meet some ray, say ray $i$, in an infinite set. Now consider another arbitrary ray, $R_j$. Since $G$ has full Hirsch length, there exists a $g \in G$ such that $t_i(g) < 0, t_j(g) > 0$ and $t_k(g) = 0$ for all $k \neq i,j$. Lemma \ref{3.1} states that $g$ can be written as a product of finitely many disjoint cycles. Thus, by construction, the support of $g$ is almost equal to $R_i\cup R_j$. In particular, $vG\cap R_i$ must contain an element of the support of $g$ that is not part of a finite cycle (and so is not in a finite orbit of $g$). Call this element $u \in \Ra_n$. By choosing a sufficiently large $z\in\N$, we will have that $\{u g^m : m\ge z\}$ is a subset of $R_j$. Hence $vG$ meets $R_j$ in an infinite set.
\end{proof}

\begin{lemma}\label{4.4}
Let $n\in \{2, 3,\ldots\}$ and $G\le H_n$ have full Hirsch length. Then $G$ acts on $\Ra_n$ with finitely many orbits.
\end{lemma}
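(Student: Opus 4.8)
The plan is to show that $G\le H_n$ of full Hirsch length has only finitely many orbits on $\Ra_n$. First I would separate the infinite orbits from the finite ones. For the infinite orbits, I would invoke Lemma~\ref{inforbs}: every infinite orbit meets every ray in an infinite set. If $v,w\in\Ra_n$ lie in distinct infinite orbits, then on each ray $R_i$ both $vG\cap R_i$ and $wG\cap R_i$ are infinite, hence cofinite subsets of $R_i$ would have to coincide — but this cannot happen for two \emph{disjoint} sets, so along a single ray two distinct infinite orbits would force a contradiction. More precisely, pick any ray $R_i$; each infinite orbit contributes infinitely many points of $R_i$, but distinct orbits are disjoint, so the number of infinite orbits is at most... here one needs to be careful: infinitely many pairwise disjoint infinite subsets of $R_i$ is possible in general. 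So the argument must be sharper.

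The sharper point is that elements of $G$ are \emph{almost} translations. Fix a single $g\in G$ with translation vector having a strictly negative entry in coordinate $i$ and a strictly positive one in coordinate $j$ (exists by full Hirsch length, as in the proof of Lemma~\ref{inforbs}, using Lemma~\ref{3.1}): such a $g$ has support almost equal to $R_i\cup R_j$ and is a product of finitely many disjoint cycles, one of them infinite, covering all but finitely many points of $R_i\cup R_j$. Consequently, all but finitely many points of $R_i\cup R_j$ lie in a \emph{single} $\langle g\rangle$-orbit, hence in a single $G$-orbit. So there is a cofinite subset of $R_i\cup R_j$ contained in one $G$-orbit. Running this over a spanning set of pairs $(i,j)$ — e.g.\ $(1,2),(1,3),\dots,(1,n)$ — produces elements $g_2,\dots,g_n\in G$ whose "big" orbits overlap pairwise on $R_1$ (each $g_k$ has a cofinite chunk of $R_1\cup R_k$ in one $G$-orbit, and two cofinite subsets of $R_1$ intersect), so all of these big chunks lie in one and the same $G$-orbit. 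Together they cover a cofinite subset of $\Ra_n=\bigcup_k R_k$. Therefore there is one $G$-orbit whose complement in $\Ra_n$ is finite.

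Once we know $\Ra_n$ minus a finite set $F$ is a single orbit, the remaining points of $F$ split into at most $|F|$ further orbits, so $G$ has at most $|F|+1$ orbits, which is finite. This completes the argument.

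I expect the main obstacle to be the careful bookkeeping in the first real step: correctly extracting from Lemma~\ref{3.1} that the chosen $g$ is a finite disjoint union of cycles with exactly one infinite cycle covering cofinitely much of $R_i\cup R_j$ — i.e.\ ruling out that the infinite part breaks into several infinite cycles. The translation-vector data forces this (a negative entry in slot $i$ and positive in slot $j$ and zero elsewhere means the unique "bi-infinite" behaviour is the single infinite cycle sliding points from $R_i$ to $R_j$), but it should be spelled out. After that, the overlap/covering argument and the final count are routine.
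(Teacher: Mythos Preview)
Your proposal contains a genuine error. The claim that the chosen $g$ (with translation vector having $-d$ in slot $i$, $+d$ in slot $j$, zeros elsewhere) has \emph{exactly one} infinite cycle is false: Lemma~\ref{3.1} says the number of infinite cycles equals $\tfrac12\sum_j |t_j(g)| = d$, so for $d>1$ the infinite part of $\supp(g)$ breaks into $d$ disjoint infinite cycles, roughly one per residue class modulo $d$ along the rays. Full Hirsch length does \emph{not} let you arrange $d=1$; it only gives finite index in the abelianisation. Consequently your downstream conclusion --- that a single $G$-orbit has cofinite complement in $\Ra_n$ --- is not just unproved but actually false in general: the paper's example $\Delta_k$ is a level subgroup of full Hirsch length with exactly $k$ infinite orbits and no finite ones.

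The paper's proof avoids all of this by a much simpler argument: for each ray $R_j$ separately, pick $g\in G$ with $t_j(g)=d>0$; then for all sufficiently large $m$ we have $(j,m)g=(j,m+d)$, so the tail of $R_j$ meets at most $d$ distinct $G$-orbits (one per residue mod $d$), and the finitely many remaining points contribute finitely many more. Summing over the $n$ rays gives finitely many orbits. Your argument can be repaired along these lines --- replace ``a single $\langle g\rangle$-orbit covers cofinitely much of $R_i\cup R_j$'' by ``$\langle g\rangle$ has finitely many orbits on $R_i\cup R_j$'' --- but at that point the pairing of rays and the overlap argument are unnecessary overhead.
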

\begin{proof}
Fix an arbitrary ray $R_j$. Then, using that $h(G)=n-1$, there exists a $g\in G$ with $t_j(g)=d>0$. For almost all $(j, m) \in R_j$, we have that $(j, m)g=(j, m+d)$. Thus $R_j$ intersects finitely many orbits, and so does $\Ra_n$.
\end{proof}

The following will be helpful to us later.
\begin{lemma}\label{lem:finitaryonallorbits} Let $n\in \{3, 4, \ldots\}$ and $G\le H_n$ have full Hirsch length. Then there exists $\sigma\in \gfin$ whose support has non-trivial intersection with each infinite orbit of $G$.
\end{lemma}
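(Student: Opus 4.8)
The plan is to produce the desired $\sigma$ by concatenating, orbit by orbit, finitary elements supported on appropriate pairs of rays, and then using the hypothesis $n \geq 3$ to absorb the "overlap" without losing control of which orbits are met. First I would record the following building block, which is essentially already available from Lemma~\ref{inforbs} and its proof: given an infinite orbit $\mathcal{O}$ of $G$ and any two rays $R_i, R_j$, there is $g \in G$ with $t_i(g) = -t_j(g) = d < 0$, $t_k(g) = 0$ for $k \neq i,j$, and (by Lemma~\ref{3.1}) $g$ a product of finitely many disjoint cycles with support almost equal to $R_i \cup R_j$; moreover $\mathcal{O}$ meets the support of $g$ in an element $u$ lying on an infinite cycle of $g$. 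Taking a suitable power of that infinite cycle of $g$ produces a finitary element whose support is a nonempty finite subset of $\mathcal{O} \cap (R_i \cup R_j)$.

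The main step is then to handle all the infinite orbits at once. Let $\mathcal{O}_1, \ldots, \mathcal{O}_k$ be the infinite orbits of $G$ (finite in number by Lemma~\ref{4.4}). For each $\mathcal{O}_\ell$, use the building block to manufacture a finitary $\sigma_\ell \in \gfin$ whose support is a nonempty finite subset of $\mathcal{O}_\ell$. The temptation is to take $\sigma = \sigma_1 \sigma_2 \cdots \sigma_k$, but the $\sigma_\ell$ need not have pairwise disjoint supports, so a priori cancellation could cause $\supp(\sigma)$ to miss some $\mathcal{O}_\ell$. To fix this, I would first \emph{enlarge} each $\sigma_\ell$ to a $3$-cycle (or product of $3$-cycles) supported inside $\mathcal{O}_\ell$ — here is where $n \geq 3$ enters, guaranteeing each infinite orbit meets at least three rays (by Lemma~\ref{inforbs} each infinite orbit meets \emph{every} ray infinitely, so there is ample room), hence contains lots of points and we may choose the $3$-cycles with pairwise disjoint supports across different $\ell$. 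Concretely: pick the support of $\sigma_\ell$ to lie inside $\mathcal{O}_\ell \cap R_{\ell'}$ for a ray $R_{\ell'}$, but since every infinite orbit meets every ray, a cleaner route is to fix one ray, say $R_1$, note every $\mathcal{O}_\ell$ meets $R_1$ in an infinite set, and choose the building-block elements so that the resulting finitary pieces have supports contained in pairwise disjoint finite subsets of $R_1$. Then $\sigma := \prod_{\ell=1}^k \sigma_\ell$ has $\supp(\sigma) = \bigsqcup_\ell \supp(\sigma_\ell)$, which meets every $\mathcal{O}_\ell$, and $\sigma \in \gfin$ since $\gfin$ is a subgroup and each $\sigma_\ell \in \gfin$.

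The subtlety I expect to be the real obstacle is arranging the disjointness: the element $g \in G$ furnished by the full-Hirsch-length condition has support almost equal to $R_i \cup R_j$, so I do not get to prescribe \emph{where} on $R_1$ the infinite cycle through a given point of $\mathcal{O}_\ell$ lives, only that such a cycle eventually runs out to infinity along some ray. So I would argue as follows: for $\mathcal{O}_\ell$, take $g_\ell \in G$ supported almost on $R_1 \cup R_{j_\ell}$ (some $j_\ell \geq 2$) with $t_1(g_\ell) > 0$; pick $u_\ell \in \mathcal{O}_\ell$ on an infinite cycle of $g_\ell$; for all large $m$, $u_\ell g_\ell^{\,m} \in R_1$ and these points march off to infinity in $R_1$. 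Hence for each $\ell$ I can find a large $m_\ell$ and a small power giving a finitary element $\tau_\ell \in \gfin$ with $\supp(\tau_\ell)$ a finite subset of $\mathcal{O}_\ell \cap R_1$ lying beyond any prescribed bound. Processing $\ell = 1, 2, \ldots, k$ in turn, at each stage push $\supp(\tau_\ell)$ far enough out along $R_1$ to be disjoint from $\supp(\tau_1) \cup \cdots \cup \supp(\tau_{\ell-1})$. Then $\sigma := \tau_1 \cdots \tau_k$ works: it lies in $\gfin$, and its support is the disjoint union $\bigsqcup_\ell \supp(\tau_\ell)$, which has nonempty (indeed finite, nonempty) intersection with each infinite orbit $\mathcal{O}_\ell$. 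This completes the argument; the case analysis is routine once the disjointness is secured, so the proof reduces to the bookkeeping just described.
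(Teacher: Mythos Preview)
Your proposal has a genuine gap at the heart of the construction: you never actually produce the finitary elements $\sigma_\ell$ (or $\tau_\ell$) as elements of $G$. The ``building block'' you describe does not do what you claim. If $g \in G$ has a point $u$ on an infinite cycle of $g$, then no nontrivial power of that cycle is finitary, and in any case an individual cycle of $g$ is a permutation that typically does not belong to $G$. Likewise, in the later paragraph the phrase ``a large $m_\ell$ and a small power giving a finitary element $\tau_\ell \in \gfin$ with $\supp(\tau_\ell) \subseteq \mathcal{O}_\ell \cap R_1$'' has no content: powers of $g_\ell$ are never finitary, and you offer no other mechanism for landing in $\gfin$. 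More seriously, the existence of a nontrivial element of $\gfin$ supported \emph{entirely inside a single orbit} $\mathcal{O}_\ell$ is far from obvious at this stage of the paper; it is essentially a consequence of the Jordan--Wielandt generalisation (Theorem~\ref{prodalt}), which in turn \emph{uses} the present lemma as a hypothesis. So your orbit-by-orbit disjointness strategy is circular as stated.

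The paper avoids all of this by exhibiting a \emph{single} element that works simultaneously for every infinite orbit: the commutator $\sigma = [h_2, h_3]$ of two elements of $G$ with translation vectors supported on rays $\{1,2\}$ and $\{1,3\}$ respectively (this is where $n \ge 3$ is used). Being a commutator, $\sigma$ is automatically in $[G,G] \le \gfin$, so there is nothing to construct. The argument that $\supp(\sigma)$ meets every infinite orbit $\mathcal{O}$ is then a short calculation: follow a point of $\mathcal{O} \cap \supp(h_2) \cap \supp(h_3)$ along the $h_3$-orbit until it \emph{leaves} $\supp(h_2)$ for the last time, and check that the commutator moves that boundary point. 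This is both simpler and logically prior to the structural results you would need to make your approach work.
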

\begin{proof}
	From the full Hirsch length assumption, there exist elements $h_2, h_3\in G$ with 
	$$
	\begin{array}{rcl}
		t_1(h_2) <0,  & & t_1(h_3) < 0, \\
		t_2(h_2) > 0, & & t_3(h_3) > 0, \\
		t_i(h_2) =0, i \neq 1,2  & & 	t_i(h_3) =0, i \neq 1,3 \\
	\end{array}
	$$ 
	
That is, $h_2$ translates along rays 1 and 2 (sufficiently far out) and fixes all but finitely many points of every other ray. Similarly, for $h_3$ with respect to rays 1 and 3. For $i=1, 2$ we may  replace $h_i$ with a larger power so to assume that each $x\in \supp(h_i)$ lies on an infinite orbit of $\langle h_i\rangle$, and we assume that we have done this.

We claim that the commutator $h_2h_3h_2^{-1}h_3^{-1}$ is a suitable choice for our $\sigma\in \gfin$. (Note this element is in the commutator subgroup and so automatically lies in $\gfin$).

	To proceed we consider some infinite $G$-orbit, $\mathcal{O}$. 
	By Lemma~\ref{inforbs}, $\mathcal{O}$ meets every ray, and in particular the first ray, in an infinite set. Since the supports of both $h_2$ and $h_3$ meet the first ray in a co-finite set, we have that $\mathcal{O} \cap \supp(h_2) \cap \supp(h_3) \neq \emptyset$. Therefore, we can choose some $y_0 \in \mathcal{O} \cap \supp(h_2) \cap \supp(h_3)$ and we now define $y_j = y_0 h_3^j$, for $j \geq 0$. 
	
	Now, for sufficiently large $j$, $y_j$ lies on the third ray, since $y_0$ is in the support of $h_3$ and therefore lies in an infinite orbit of $\langle h_3 \rangle$ - and also recalling that $t_1(h_3)<0$ and $t_3(h_3) > 0$. In particular, for sufficiently large $j$, $y_j \not\in \supp(h_2)$. However, $y_j \in \mathcal{O} \cap \supp(h_3)$ for all $j$. Therefore there is a greatest integer $j$ such that $y_j \in \mathcal{O} \cap \supp(h_2) \cap \supp(h_3)$, whereas $y_{j+1} \in \mathcal{O} \cap \supp(h_3) \setminus \supp(h_2)$. Let $z = y_j$. Then 
	$$
	z h_3 h_2 {h_3}^{-1} = z \neq z h_2 \implies z h_3 h_2 {h_3}^{-1} {h_2}^{-1} \neq z. 
	$$
	
	Therefore $h_2h_3h_2^{-1}h_3^{-1}$ acts non-trivially on the orbit $\mathcal{O}$ for an arbitrary infinite orbit, as required. 
\end{proof}

We now show that, up to abstract commensurability, we can assume $G$ has only infinite orbits. Recall the standard notations that $G_p:=\stab_G(p)$ and $A\le_f B$ denotes that $A$ has finite index in $B$.
\begin{lemma} \label{commensurable} With $n\in \{2, 3, \ldots\}$ and $G\le H_n$ of full Hirsch length acting on the ray system, $\Ra_n$, we get the following:
\begin{enumerate}
\item The set $F$, obtained by taking the union of the finite orbits of $G$, is finite.
\item $K:=\stab_G(F)$ has finite index in $G$ and each of its orbits are either infinite or a singleton.
\item The $K$ above is isomorphic to some $L\le H_n$ so that the orbits of $L$ are all infinite and $L$ has full Hirsch length.
\end{enumerate}
 
In particular, $G$ is abstractly commensurable to a subgroup $L$ of $G$ of full Hirsch length and so that every $L$-orbit on $\Ra_n$ is infinite.  
\end{lemma}

\begin{proof} Part (i) follows from  Lemma~\ref{4.4}. For (ii), $K$ is the kernel of the restriction map from $G$ to $\Sym(F)$ and hence has finite index.

Take $p\in \Ra_n$ such that $pG$ is infinite. This is equivalent to $[G: G_p]=\infty$. Using that $K\le_f G$ and $K_p\le_fG_p$, we see that $[K: K_p]=\infty$ meaning that $pK$ is also infinite.

Finally we show (iii).  By Lemma \ref{lem:orderiso1} we have that $K$, considered as a subgroup of $\Sym(\Ra_n\setminus F)$ is isomorphic, using the induced order preserving bijection from $\Ra_n\setminus F$ to $\Ra_n$, to a group $L$ of almost order preserving bijections on $\Ra_n$. Hence, by Proposition \ref{aopisHou}, $K$ is isomorphic to a subgroup $L$ of $H_n$ where the orbits of $L$ are all infinite and partition $\Ra_n$. Since they are isomorphic, we have that
$$\Z^{n-1}\cong\quotient{K}{\kfin}\cong \quotient{L}{\lfin}.$$
By Lemma \ref{lem:structureofkfin}, $L$ has full Hirsch length.
\end{proof}
Now that we have imposed a condition on the orbits of our $G\le H_n$, we will impose (at the cost of replacing $G$ by a finite index subgroup) a further restriction relating to the possible translation lengths. We first deal with $n\geq 3$.

\begin{definition}
	\label{level}
	Let $n\in \{2, 3, 4,\ldots\}$. We shall say that $G\le H_n$ is a \emph{level} subgroup if one of the following holds:
\begin{enumerate}[(i)]
	\item $n \geq 3$ and for each $g\in G$ and each pair $i\ne j$ of elements of $\{1,\ldots,n\}$, there exists $y\in G$ such that $y$ and $g$ have the same translation component on the $j$th ray and $y$ has translation component zero on the $i$th ray or, 
	\item $n=2$ and  for every $p \in \Ra_2$, $G_p \gfin$ is either equal to $G$ or to $\gfin$.  
\end{enumerate}

	Let $\pi$ denote the natural surjective map from $H_n$ to $\Z^{n-1}$. If $\pi(G)=(m\Z)^{n-1}$ for some integer $m\ge 1$, then we shall say that $G$ is a \emph{congruence-lifting subgroup}. That is, $G$ maps to a congruence subgroup of the abelianisation. 
\end{definition}

\begin{remark}
	It is easy to see that for $n=2$, any subgroup of full Hirsch length is a congruence-lifting subgroup. It also transpires that for $n \geq 3$, if $G$ is any level subgroup then $G_p \gfin = G$, by Proposition~\ref{6.1}.  
\end{remark}

The following is straightforward.

\begin{lemma}
	\label{level for n>3}
	Let $n\in \{3, 4,\ldots\}$. Any congruence-lifting subgroup of $H_n$ is level. In particular, any subgroup of full Hirsch length of $H_n$ admits a finite index subgroup which is level. 
\end{lemma}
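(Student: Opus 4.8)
The statement to prove is Lemma~\ref{level for n>3}: for $n\ge3$, any congruence-lifting subgroup of $H_n$ is level, and consequently any full-Hirsch-length subgroup has a finite-index subgroup that is level. I would split this into two parts exactly as stated. For the first part, suppose $G\le H_n$ is congruence-lifting, so there is $m\ge1$ with the image of $G$ in $\Z^{n-1}$ (the zero-sum subgroup of $\Z^n$, via $t_*$) contained in $m\Z^n\cap\{\text{zero-sum}\}$. Fix $g\in G$ and a pair $i\ne j$. I want $y\in G$ with $t_j(y)=t_j(g)$ and $t_i(y)=0$. The natural candidate is to build $y$ from $g$ by multiplying by an element of $G$ supported on rays other than $j$ that cancels the $i$th translation component of $g$.

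\textbf{Main construction.} Because $G$ is congruence-lifting, $t_*(g)\in m\Z^n$, so in particular $t_i(g)=ma$ for some $a\in\Z$. Since $G$ has full Hirsch length, the image of $t_*$ on $G$ is a finite-index subgroup of the zero-sum lattice; combined with the congruence condition, I claim the image is exactly $m\Z^n\cap\{\text{zero-sum}\}$ — or at least contains enough "elementary" vectors. Concretely, for $n\ge3$ there is room to find $h\in G$ whose translation vector is $m(e_k-e_i)$ for a suitable third index $k\notin\{i,j\}$ (here I use $n\ge3$ to have such a $k$ available, and I use that the full-Hirsch-length-plus-congruence-lifting image contains such vectors — this needs a short argument showing the image is generated by vectors of the form $m(e_p-e_q)$, which follows because any finite-index sublattice of the zero-sum lattice containing $m\Z^n\cap\{\text{zero-sum}\}$... actually the cleanest route: the image under $t_*$ of a full-Hirsch-length congruence-lifting $G$ contains $m\Z^n\cap\{\text{zero-sum vectors}\}$, because that lattice is generated by the $m(e_p-e_q)$ and each such generator lies in the image — to see the last point, full Hirsch length gives that the image has finite index in the zero-sum lattice, and being a subgroup of the zero-sum lattice that is congruence-lifting and of finite index, one checks it must contain the congruence sublattice). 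Then set $y=g\cdot h^{-a}$ where $t_*(h)=m(e_i-e_k)$ and $a$ is chosen so $t_i(y)=t_i(g)-a\,m=0$; this $h^{-a}$ affects only rays $i$ and $k$, both distinct from $j$, so $t_j(y)=t_j(g)$. That gives condition (i) of Definition~\ref{level}.

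\textbf{The finite-index reduction.} For the "in particular" clause, let $G$ be any full-Hirsch-length subgroup. Its image $t_*(G)$ is a finite-index subgroup $\Lambda$ of the zero-sum lattice $\cong\Z^{n-1}$, hence contains $m\Lambda_0$ for some $m\ge1$ where $\Lambda_0$ is the zero-sum lattice; so the preimage $G'$ of $m\Lambda_0$ in $G$ is a finite-index subgroup of $G$ whose image in $\Z^{n-1}$ lands in a congruence subgroup — i.e. $G'$ is congruence-lifting — and $G'$ still has full Hirsch length (its image is $m\Lambda_0$, of finite index). By the first part, $G'$ is level.

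\textbf{Anticipated obstacle.} The genuinely substantive point, and the one I'd spend the most care on, is the lattice-theoretic claim that a finite-index, congruence-lifting subgroup of the zero-sum lattice contains the corresponding congruence sublattice $m\Z^n\cap\{\text{zero-sum}\}$ as an actual subset (not merely a quotient condition), and that consequently it contains all the elementary differences $m(e_p-e_q)$ needed to run the cancellation. This is where "congruence-lifting" as defined — a condition on the image in $(\Z/m\Z)^{n-1}$ — must be reconciled with membership in $m\Z^n$: one needs that the composite $G\to\Z^{n-1}\to(\Z/m\Z)^{n-1}$ being the zero map forces $t_*(G)\subseteq m\Z^n$, which is essentially immediate from exactness of $0\to m\Z^{n-1}\to\Z^{n-1}\to(\Z/m\Z)^{n-1}\to0$, but I'd state it explicitly. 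Everything else — lifting an abelian-level statement to the permutation group using that $\FSym(\Ra_n)\le G'$ is not even required here since we only manipulate translation vectors, and the elements $h$ with prescribed translation vector exist in $G'$ by definition of the image — is routine. I expect the whole proof to be short, matching the paper's assertion that "the following is straightforward."
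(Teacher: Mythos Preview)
Your overall strategy matches the paper's very short proof, but you have misread the definition of \emph{congruence-lifting}. The paper intends it to mean that the image $t_*(G)$ in the zero-sum sublattice of $\Z^n$ \emph{equals} a congruence sublattice $m\Z^n\cap\ker(\epsilon)$ for some $m\ge1$ (this is exactly what the one-line proof in the paper asserts), not merely that the image is \emph{contained} in such a sublattice. Under the correct reading, full Hirsch length is automatic, and your ``anticipated obstacle'' evaporates: since $t_*(G)=m\Z^n\cap\ker(\epsilon)$, every vector $m(e_p-e_q)$ lies in the image, so given $g$ and $i\ne j$ you may (using $n\ge3$) pick $k\notin\{i,j\}$ and take $y\in G$ directly with translation vector $t_j(g)\,(e_j-e_k)$; no multiplication by powers of auxiliary elements is needed.

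It is worth flagging that under \emph{your} reading of the definition, the obstacle is not merely cosmetic but genuine: the lattice claim you sketch --- that a finite-index sublattice of the zero-sum lattice which is contained in $m\Z^n$ must contain $m\Z^n\cap\{\text{zero-sum}\}$ --- is false (for instance $2m\Z^n\cap\{\text{zero-sum}\}$ has finite index, sits inside $m\Z^n$, yet does not contain $m(e_1-e_2)$). So the gap cannot be closed along the lines you propose; the resolution is purely definitional. Your treatment of the ``in particular'' clause is correct and is how one expects the paper intends it: pass to the preimage in $G$ of a congruence sublattice contained in $t_*(G)$.
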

\begin{proof}
We adopt the notation from Definition \ref{level}. Suppose that $\pi(G)=(m\Z)^{n-1}$ where $m\ge 1$. Given any $g\in G$, we can find $y\in G$ with $t_j(y)=t_j(g)$, $t_{j'}(y)=-t_j(y)$ for some $j'\not\in\{i, j\}$ and $t_k(y)=0$ otherwise. Hence $G$ is level. Assuming that $G$ has full Hirsch length, the 2nd claim follows since a finite index subgroup of $\Z^{n-1}$ contains $(p\Z)^{n-1}$ for some $p\ge 1$.
\end{proof}

\begin{remark}
It is easy to find subgroups of full Hirsch length which fail to be level. For example, when $n=3$ choose the subgroup $G$ generated by the finitary permutations together with two elements of infinite order whose translation vectors are
	$$(1,2,-3)$$
	and $$(2,1,-3).$$
	Thus there is an element with translation component $1$ on the first ray, but it is easy to see that any element with translation component zero on the second ray must have translation length that is a multiple of $3$ on the first ray.
\end{remark}

It also transpires that any subgroup of full Hirsch length in $H_2$ admits a level subgroup of finite index. 

\begin{proposition}
	\label{level for n=2}
	Let $G$ be a subgroup of full Hirsch length in $H_2$. Then $G$ admits a finite index subgroup which is level.  
\end{proposition}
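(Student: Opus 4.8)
The plan is to realise the required subgroup as a congruence subgroup of $G$ cut out by a single well-chosen integer, which simultaneously ``flattens'' the images of all the point stabilisers. Since $G$ has full Hirsch length in $H_2$, Lemma~\ref{lem:structureofkfin} gives $G/\gfin\cong\Z$; fix such an isomorphism and let $\pi\colon G\twoheadrightarrow\Z$ be the resulting surjection, so that $\ker\pi=\gfin=G\cap\FSym(\Ra_2)$. For $p\in\Ra_2$ write $G_p=\stab_G(p)$; then $\pi(G_p)$ is a subgroup of $\Z$, hence $\pi(G_p)=m_p\Z$ for a unique integer $m_p\ge0$. Unwinding the definition of \emph{level} for $n=2$, a finite-index subgroup $G'\le G$ containing $\gfin$ (so that $G'\cap\FSym(\Ra_2)=\gfin$) is level exactly when, for every $p$, the image $\pi(G'_p)$ under $\pi|_{G'}$ is either trivial or all of $\pi(G')$.

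First I would note that $m_p$ depends only on the $G$-orbit of $p$: if $q=pg$ then $G_q=g^{-1}G_pg$, so $\pi(G_q)=\pi(G_p)$ because $\Z$ is abelian. By Lemma~\ref{4.4}, $G$ has only finitely many orbits on $\Ra_2$, so $\{m_p:p\in\Ra_2\}$ is a finite set. If every $m_p=0$ then $G_p\le\gfin$ for all $p$ and $G$ is already level; otherwise let $M\ge1$ be the least common multiple of the nonzero values among the $m_p$.

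Now put $G':=\pi^{-1}(M\Z)$, a subgroup of index $M$ in $G$. Since $\gfin=\ker\pi\le G'$ we have $G'\cap\FSym(\Ra_2)=\gfin$ and $\pi(G')=M\Z$, so $\pi|_{G'}$ has kernel $\gfin$ and (by Lemma~\ref{lem:structureofkfin} again) $G'$ still has full Hirsch length. Fix $p\in\Ra_2$ and write $G'_p=\stab_{G'}(p)=G'\cap G_p$. If $m_p=0$ then $G_p\le\ker\pi=\gfin\le G'$, so $G'_p=G_p\le\gfin$ and hence $G'_p\,\gfin=\gfin$. If $m_p\ne0$ then $m_p\mid M$, so $M\Z\subseteq m_p\Z=\pi(G_p)$; since $\pi|_{G_p}\colon G_p\twoheadrightarrow m_p\Z$ is onto, the preimage of $M\Z$, namely $G_p\cap\pi^{-1}(M\Z)=G'_p$, is carried onto $M\Z=\pi(G')$, and therefore $G'_p\,\gfin=G'$. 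In either case the level condition holds at $p$; as $p$ was arbitrary, $G'$ is a finite-index level subgroup of $G$.

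The one point that needs care — and the reason the $n=2$ case is not quite a triviality — is that one cannot simply discard the stabilisers of points in infinite orbits: in $H_2$ itself every point stabiliser is isomorphic to $H_2$ and surjects onto $\Z$, so the integers $m_p$ attached to infinite orbits genuinely contribute. Taking $M$ to be the l.c.m.\ of \emph{all} of the (finitely many) values $m_p$ is exactly what forces $\pi(G'_p)\in\{0,\pi(G')\}$ uniformly in $p$; everything else is routine bookkeeping with subgroups of $\Z$.
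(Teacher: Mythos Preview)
Your proof is correct and is essentially the paper's own argument, rephrased in explicit integer arithmetic: your subgroup $G'=\pi^{-1}(M\Z)$ coincides with the paper's $K=\bigcap\{G_p\gfin:\ G_p\gfin\text{ has finite index in }G\}$, since $G_p\gfin=\pi^{-1}(m_p\Z)$ and the intersection of the $m_p\Z$ with $m_p\neq 0$ is exactly $M\Z$. Both proofs hinge on the same two observations---that $G_p\gfin$ depends only on the $G$-orbit of $p$ and that there are only finitely many orbits (Lemma~\ref{4.4})---and then pass to the common refinement.
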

\begin{proof} To show that $G$ has a level subgroup of finite index we will show that there is a finite index subgroup $K$ of $G$ such that $\kfin = \gfin$ and for each $p \in \Ra_2$, $K_p \kfin$ equals either $K$ or $\kfin$.

	To do this, note that each $G_p\gfin$ is a normal subgroup of $G$ (since any subgroup of $G$ containing $\gfin$ is normal) and is either equal to $\gfin$ or is a finite index subgroup of $G$, since $G/\gfin \cong \Z$. In particular, since $(G_p \gfin)^g = G_{pg} \gfin$ is normal and hence equals $G_p \gfin$, there are only finitely many subgroups $G_p  \gfin$, as $p$ varies in $\Ra_2$ (at most one for each $G$-orbit). Therefore we can take $K$ to be the intersection of all $G_p \gfin$ which have finite index in $G$. This is therefore a finite intersection of finite index subgroups of $G$ and so has finite index. Clearly, $K$ contains $\gfin$ and hence $\kfin = \gfin$. We claim that $K$ is level.

	 To see this, first choose any $p \in \Ra_2$. Then $K_p \kfin = K \cap (G_p \gfin)$, since $K_p = K \cap G_p$ and $\kfin=\gfin \cap K = \gfin$. Hence if $G_p \gfin$ has finite index in $G$, then it contains $K$ and $K_p \kfin=K$. Similarly if $G_p \gfin = \gfin$, then  $K_p \kfin=\kfin$. 
\end{proof}

We gather these results together in the  following.

\begin{proposition}\label{level and inforbs}
	Let $n\in \{2, 3, \ldots\}$ and $G$ be a subgroup of $H_n$ of full Hirsch length. Then $G$ is abstractly commensurable to a subgroup, $K$, of $H_n$ which is level and whose orbits are all infinite. 
\end{proposition}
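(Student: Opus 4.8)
The plan is to stitch together the two reduction steps already established --- Lemma~\ref{commensurable}, which replaces $G$ by an abstractly commensurable subgroup of $H_n$ whose orbits are all infinite, and Lemma~\ref{level for n>3} (for $n\ge3$) respectively Proposition~\ref{level for n=2} (for $n=2$), which passes from a group of full Hirsch length to a level subgroup of finite index --- applied in exactly this order. The only extra ingredient is the elementary observation that, whereas levelness need not be inherited by a finite-index subgroup, the property ``every orbit is infinite'' is.

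Here are the steps. First, apply Lemma~\ref{commensurable} to $G$ to obtain a subgroup $L\le H_n$ of full Hirsch length, abstractly commensurable to $G$, all of whose orbits on $\Ra_n$ are infinite. Since $L$ again has full Hirsch length, Lemma~\ref{level for n>3} (if $n\ge3$) or Proposition~\ref{level for n=2} (if $n=2$) yields a subgroup $K$ of finite index in $L$ which is level; note $K\le L\le H_n$. As a finite-index subgroup of the group $L$, which is abstractly commensurable to $G$, the group $K$ is abstractly commensurable to $G$ as well, since abstract commensurability is an equivalence relation and a finite-index subgroup is abstractly commensurable to the group containing it. It remains to see that $K$ still has only infinite orbits. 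Fix $p\in\Ra_n$ and choose coset representatives $g_1,\dots,g_r$ for $K$ in $L$, where $r=[L:K]<\infty$; then $pL=\bigcup_{i=1}^r(pK)g_i$, so $|pL|\le r\,|pK|$, and since $|pL|=\infty$ we conclude $|pK|=\infty$. Hence every $K$-orbit is infinite, and $K$ is the subgroup demanded by the statement.

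I anticipate no real obstacle: this proposition is essentially bookkeeping, assembling the lemmas of this section. The one point to watch is the order in which the two reductions are performed. Levelness is genuinely not preserved by passing to finite-index subgroups --- already in $H_3$ there are finite-index subgroups that fail to be level, cf.\ the Remark following Lemma~\ref{level for n>3} --- so one cannot first make $G$ level and then apply Lemma~\ref{commensurable} to strip off its finite orbits, because that last move could destroy levelness. Carrying out the reductions in the opposite order succeeds precisely because ``all orbits infinite'' does survive passage to a finite-index subgroup, as checked above.
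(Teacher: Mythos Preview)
Your proof is correct and follows essentially the same approach as the paper: first apply Lemma~\ref{commensurable} to obtain $L$ with only infinite orbits, then pass to a level finite-index subgroup $K$ via Lemma~\ref{level for n>3} or Proposition~\ref{level for n=2}, and finally verify that $K$ inherits the ``all orbits infinite'' property. The paper checks this last point using the index of point stabilisers rather than your coset-decomposition argument, but the two are equivalent.
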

\begin{proof}
	We already know that $G$ is abstractly commensurable to a subgroup $L$ of $H_n$ whose orbits are all infinite, by Lemma~\ref{commensurable}. Now by Lemmas~\ref{level for n>3} and \ref{level for n=2}, $L$ admits a finite index subgroup $K$ which is level. While $L$ and $K$ need not have the same orbits, the fact that $L$ admits no finite orbits implies that neither does $K$. This is because the size the orbit of some $p \in \Ra_n$ is simply the index of the point stabiliser. But if $L_p$ has infinite index in $L$ then $K_p = K \cap L_p$ must have infinite index in $K$. 
\end{proof}

We also obtain the following.
\begin{lemma}
	\label{restricting}
	Let $n\in \{2, 3, \ldots\}$ and $G$ be a subgroup of $H_n$ of full Hirsch length. Then any infinite orbit $\mathcal{O}$ of $G$ is order isomorphic to $\Ra_n$. Moreover, 
	\begin{enumerate}[(i)]
		\item The restriction of $G$ to $\mathcal{O}$ is a subgroup of $H_n$ (of $\mathcal{O}$) of full Hirsch length, and
		\item If $G$ is level, then the restriction to an infinite orbit is also level. 
	\end{enumerate}
\end{lemma}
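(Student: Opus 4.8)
\textit{Proof proposal.} The plan is to fix an infinite orbit $\mathcal{O}$ of $G$, identify its underlying ordered set with $\Ra_n$, view the restriction $\overline G:=\{\,g|_{\mathcal{O}}: g\in G\,\}\le\Sym(\mathcal{O})$ as a subgroup of $H_n$, and then transport the Hirsch length and the levelness from $G$ to $\overline G$. The order isomorphism comes first: by Lemma~\ref{inforbs} each $\mathcal{O}\cap R_i$ is an infinite subset of the ray $R_i\cong\N$, hence has order type $\omega$, so $\mathcal{O}$, being the concatenation $(\mathcal{O}\cap R_1),\dots,(\mathcal{O}\cap R_n)$ in the lexicographic order, has order type $\omega n$, the same as $\Ra_n$. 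By Fact~\ref{orders1} and comparison of order types there is an order isomorphism $\theta\colon\mathcal{O}\to\Ra_n$; since $\mathcal{O}$ has exactly $n-1$ limit points (the minima of $\mathcal{O}\cap R_2,\dots,\mathcal{O}\cap R_n$) and order isomorphisms preserve limit points and intervals, $\theta$ restricts to order isomorphisms $\mathcal{O}\cap R_i\to R_i$ for each $i$; writing $\mathcal{O}\cap R_i=\{(i,a^i_0)<(i,a^i_1)<\cdots\}$ this simply says $\theta(i,a^i_\ell)=(i,\ell)$. Via $\theta$ and Proposition~\ref{aopisHou}, the group of almost order preserving bijections of $\mathcal{O}$ is identified with $H_n$, which gives the first assertion of the lemma.

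For part (i) I would use the restriction homomorphism $r\colon G\to\Sym(\mathcal{O})$, $g\mapsto g|_{\mathcal{O}}$, well defined since $\mathcal{O}$ is $G$-invariant, with image $\overline G$. Each $g\in G$ is almost order preserving on $\Ra_n$, so $g|_{\mathcal{O}}$ is almost order preserving on $\mathcal{O}$, whence $\overline G$ lies in the copy of $H_n$ just described. The key point is that $r^{-1}(\overline G_{\textit{fin}})=\gfin$: the element $g|_{\mathcal{O}}$ is finitary on $\mathcal{O}$ iff it has zero translation component along each $\mathcal{O}\cap R_i$, and — because $\mathcal{O}\cap R_i$ is infinite, so contains points beyond the eventual-translation threshold of $g$ — this holds iff $t_i(g)=0$ for every $i$, i.e.\ iff $g\in\gfin$. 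Hence $\overline G/\overline G_{\textit{fin}}\cong G/\gfin\cong\Z^{n-1}$, and Lemma~\ref{lem:structureofkfin} gives $h(\overline G)=n-1$, so $\overline G$ has full Hirsch length.

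For part (ii) the enabling remark is that if two elements of $H_n$ agree off a finite subset of a ray $R_i$, then (since $\mathcal{O}\cap R_i$ is infinite) their restrictions to $\mathcal{O}$ agree off a finite subset of $\mathcal{O}\cap R_i$, hence have equal $i$-th translation component there. When $n\ge3$, given $\overline g=g|_{\mathcal{O}}\in\overline G$ and $i\ne j$, levelness of $G$ supplies $y\in G$ with $t_j(y)=t_j(g)$ and $t_i(y)=0$; then $y$ agrees with $g$ off a finite subset of $R_j$ and fixes all but finitely many points of $R_i$, so $y|_{\mathcal{O}}$ has the same $j$-th translation component as $\overline g$ and $i$-th translation component $0$, which is exactly the levelness condition for $\overline G$. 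When $n=2$, for $p\in\mathcal{O}$ one has $\overline G_p=r(G_p)$, because $g|_{\mathcal{O}}$ fixes $p$ iff $g$ fixes $p$ and so $r^{-1}(\overline G_p)=G_p$; together with $\overline G_{\textit{fin}}=r(\gfin)$ from part (i) this gives $\overline G_p\,\overline G_{\textit{fin}}=r(G_p\gfin)$, which equals $r(G)=\overline G$ or $r(\gfin)=\overline G_{\textit{fin}}$ according as $G_p\gfin$ is $G$ or $\gfin$.

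The only genuinely delicate point I anticipate is the bookkeeping with translation lengths: $\mathcal{O}\cap R_i$ need not have positive density in $R_i$, so $\theta$ rescales translation components in a way controlled only by the set $\mathcal{O}\cap R_i$, and one must be careful never to need this rescaling. The argument above is arranged precisely so that it is never computed — it either acts on the zero translation or cancels when two elements agreeing along a ray are compared. Everything else (that $\theta$ respects the ray decomposition, the kernel/image analysis of $r$, and the elementary amenable arithmetic of Hirsch length) is routine.
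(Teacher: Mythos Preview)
Your proof is correct and follows essentially the same route as the paper's: invoke Lemma~\ref{inforbs} to see $\mathcal{O}$ has order type $\omega n$, use Proposition~\ref{aopisHou} to identify the Houghton group on $\mathcal{O}$, show $r^{-1}(\overline G_{\textit{fin}})=\gfin$ to get full Hirsch length via Lemma~\ref{lem:structureofkfin}, and then transport levelness ray-by-ray (for $n\ge3$) or via point stabilisers (for $n=2$). Your treatment is in fact more careful than the paper's on the one genuinely delicate point you flag---the rescaling of translation components under $\theta$---since you explicitly arrange the argument so that only equality of translation components (not their values) is ever needed.
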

\begin{proof}
	Recall that by Proposition~\ref{aopisHou}, $H_n$ is precisely the group of almost order preserving bijections of $\Ra_n$, and so we may define the Houghton group on any set order isomorphic to $\Ra_n$.

	Next observe that Lemma~\ref{inforbs} says that $\mathcal{O}$ meets every ray in an infinite set, from which it follows that $\mathcal{O}$ is order isomorphic to $\Ra_n$ and hence the restriction map, $\textrm{res}_{\mathcal{O}}: G \to \Sym(\mathcal{O})$ lands in the group of almost order preserving bijections of $\mathcal{O}$. That is, the image of the restriction map lands in the Houghton group of $\Sym(\mathcal{O})$. 
	
	To prove (i), note that since $\mathcal{O}$ meets every ray in an infinite set, any element of the kernel of  $\textrm{res}_{\mathcal{O}}$ has zero translation vector. That is, $\ker(\textrm{res}_{\mathcal{O}}) \leq \gfin $ and therefore $G/\gfin \cong \Gamma/\gamfin$, where $\Gamma$ is the image of $\textrm{res}_{\mathcal{O}}$ and the isomorphism is induced by the restriction map. In particular, by Lemma~\ref{lem:structureofkfin}, $\Gamma$ has full Hirsch length. 
	
	To prove (ii), we split into two cases depending on $n$. For $n \geq 3$, and any $\gamma \in \Gamma = \im(\textrm{res}_{\mathcal{O}})$, we need to exhibit another element of $\Gamma$ whose translation vector agrees with $g$ on the $i^{th}$ ray and is zero on the $j^{th}$ ray. However, the rays of $\mathcal{O}$ are precisely the intersections of $\mathcal{O}$ with the original rays. In particular, since $G$ is level and there exists a $g \in G $ which maps to $\gamma$, we can find a $y \in G$ whose translation vector agrees with $g$ on ray $i$ and is zero on ray $j$. Our required element for $\Gamma$ is then the image of $y$. 
	
	For $n=2$, we need instead to prove that $\Gamma_p \gamfin$ equals either $\Gamma$ or $\gamfin$ for all $p \in \mathcal{O}$. However, it is easy to see that 
	$\textrm{res}_{\mathcal{O}}(\gfin) = \gamfin$, $\textrm{res}_{\mathcal{O}}(G_p) = \Gamma_p$ and $\textrm{res}_{\mathcal{O}}(G) = \Gamma$, from which it follows that, in the $n=2$ case, $G$ being level implies that $\Gamma$ is level. 
\end{proof}

\section{Generalising primitivity to intransitive actions}\label{sec:blocks}

\subsection*{Strongly Orbit Primitive}

The aim of this section is to introduce the concept of a strongly orbit primitive action, which will generalise the idea of a primitive group action to the intransitive case. This will be a condition which implies that the restriction to each orbit is primitive, but is stronger than that; it will also say that the orbits are also independent in some sense. 

We recall that a \emph{$G$-partition} (or a $G$-invariant partition) on a $G$-set $S$ is a partition $\mathcal{P}$ of $S$ such that $Pg \in \mathcal{P}$ for all $P \in \mathcal{P}$ and all $g \in G$. That is, $G$ permutes the parts of the partition. 

The action of a group $G$ on a set $S$ is called \emph{primitive} if, for any $G$-partition, either all the parts of the partition are singletons or there is a single part equal to $S$. Equivalently, the action is primitive if it is transitive and point stabilisers are maximal subgroups.

\begin{definition}[Strongly Orbit Primitive]
	\label{def:stronglyorbitprimitive}
	We say that a group $G$ acting on a set $S$ is \emph{strongly orbit primitive} if for every $G$-partition on $S$, every part of the partition is either a singleton or contains an entire $G$ orbit. 
\end{definition}

\begin{proposition}
		Let $G$ act on a set $S$. The following are equivalent: 
	\begin{enumerate}[(i)]
		\item The action of $G$ on $S$ is strongly orbit primitive. 
		\item For every $G$-partition on $S$, every part of the partition is either a singleton or a union of orbits.
		\item Every point stabiliser is a maximal subgroup of $G$ and points from different $G$ orbits have distinct stabilisers. 
	\end{enumerate}
\end{proposition}
\begin{proof}

	(i) $\Rightarrow$ (ii) is clear. Indeed, if $\mathcal{P}$ is a $G$-partition and $P \in \mathcal{P}$ is not a singleton, then $P$ contains a $G$ orbit, hence is stabilised by $G$ and is therefore a union of $G$ orbits. 
	
	(ii) $\Rightarrow$ (iii):  
	Fix $p\in S$ and let $H$ be a subgroup with $G_p \le H \le G$.  
	The partition whose parts are $pHg$ where $g \in G$ and singletons on other orbits 
	yields a $G$-invariant partition.  
	By (ii), the part containing $p$ is either a singleton (forcing $H=G_p$) 
	or a union of $G$-orbits (forcing $H=G$).  
	Thus $G_p$ is maximal.  
	
	If $p$ and $q$ lie in different orbits but have $G_p=G_q$, then notice that $pg=ph$ if and only if $qg=qh$. We can therefore construct the $G$-partition whose parts are $\{ pg, qg \}$ for $g \in G$  (and singletons outside of $pG \cup qG$). This 
	has a non-singleton part that is not a union of orbits, 
	contradicting (ii).  
	Hence stabilisers of points in different orbits are distinct.
	
	(iii) $\Rightarrow$ (i):  
	Let $\mathcal{P}$ be a $G$-partition and let $P \in \mathcal{P}$ 
	contain distinct points $p$ and $q$.  
	Its setwise stabiliser contains:
	 $\langle G_p, g \rangle$ if $q = pg$, or  
	$\langle G_p, G_q \rangle$ if $p$ and $q$ are in different orbits.
	
	In either case, maximality and distinctness of stabilisers imply that 
	this subgroup is $G$, so $P$ contains the $G$-orbit of $p$.  
	Thus the action is strongly orbit primitive.
	
\end{proof}

The following is now immediate. 

\begin{corollary}
	Suppose that the action of a group $G$ on a set $S$ is strongly orbit primitive. Then the restriction of $G$ to any orbit is primitive. 
\end{corollary}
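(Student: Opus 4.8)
The plan is to deduce primitivity on a fixed orbit directly from the partition characterisation of strong orbit primitivity supplied by the preceding Proposition. Fix an orbit $\mathcal{O}$ of $G$ on $S$; the action of $G$ on $\mathcal{O}$ is transitive by construction, so it remains to rule out non-trivial $G$-invariant partitions of $\mathcal{O}$. Given such a partition $\mathcal{Q}$ of $\mathcal{O}$, I would extend it to a partition $\mathcal{P}$ of the whole set $S$ by adjoining the singleton $\{x\}$ for every $x\in S\setminus\mathcal{O}$. Since $G$ preserves $\mathcal{O}$ setwise and permutes the parts of $\mathcal{Q}$, the partition $\mathcal{P}$ is $G$-invariant.

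Now apply the definition of strongly orbit primitive to $\mathcal{P}$: every part of $\mathcal{P}$ is a singleton or contains an entire $G$-orbit. A part of $\mathcal{P}$ lying inside $\mathcal{O}$ (that is, a part of $\mathcal{Q}$) is contained in the single orbit $\mathcal{O}$, so if it contains an entire $G$-orbit it must contain, hence equal, $\mathcal{O}$. Thus either every part of $\mathcal{Q}$ is a singleton, or $\mathcal{Q}=\{\mathcal{O}\}$; in both cases the partition is trivial, so $G$ acts primitively on $\mathcal{O}$. Alternatively one could argue via part (iii) of the preceding Proposition: for $p\in\mathcal{O}$ the point stabiliser $G_p$ is maximal in $G$, and a transitive action whose point stabilisers are maximal is primitive. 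There is no genuine obstacle here; the only point requiring care is the observation that a block contained in a single orbit which "contains an entire orbit" is forced to be that whole orbit.
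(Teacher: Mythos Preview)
Your proposal is correct; in fact the paper gives no proof at all, merely recording that the corollary ``is now immediate'' from the preceding Proposition. Both of your routes---extending a $G$-partition of $\mathcal{O}$ by singletons on $S\setminus\mathcal{O}$ and invoking the definition, or appealing to condition~(iii) that point stabilisers are maximal---are exactly the sort of one-line deductions the paper has in mind.
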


We note that being strongly orbit primitive is stronger than restricting to a primitive action on each orbit as the following example shows.

\begin{example}
	\label{ex:not strongly orbit primitive}
	
	Consider the action of $\Sym_n \times \Sym_n$ on $2n$ points, where the first factor acts on $\{ 1, \ldots, n\}$ and the second factor acts on $\{n+1, \ldots, 2n\}$. This action has $2$ orbits and is strongly orbit primitive; each stabiliser is maximal and points from distinct orbits have different stabilisers.

	 Now consider the diagonal subgroup, $\Delta = \{ (\sigma, \sigma) \in \Sym_n \times \Sym_n \}$. Note that $\Delta$ also admits $2$ orbits and the restriction to each orbit induces a primitive action. However, while point stabilisers in $\Delta$ are maximal subgroups, the point stabilisers in different orbits can coincide; for example $1$ and $n+1$ have the same stabiliser in $\Delta$. Hence the action of $\Delta$ is primitive when restricted to each orbit but is not strongly orbit primitive.
	\end{example}

\subsection*{Block Systems}

In the context of primitive actions it is useful to use the notion of blocks. We will extend this to intransitive actions. We first recall the definition of a block.

\begin{definition}
	Let $G$ be a group acting on a set $S$, perhaps intransitively. We say that $B \subseteq S$ is a block if $Bg \cap B \neq \emptyset \implies Bg=B$.
\end{definition} 
\begin{remark}
	Note that if we are given a block, $B$, then there is a $G$-partition on $S$ such that $B$ is a part of the partition. 
\end{remark}

We now introduce the notion of a block system. The idea here is that if a group acts on a set and is not strongly orbit primitive, then there is some partition that witnesses the fact. A block system will be such a witness, where a choice has been made for an orbit representative for each part of the partition. 

\begin{definition} \label{blocksystem}
	Suppose $G$ acts on a set $S$. We say that $\{ B_1, \ldots, B_k \}$ is a \emph{block system} for this action if the following conditions hold. 
	
	\begin{enumerate}
		\item Each $B_i$ is a subset of $S$.
		\item Each $G$-orbit meets exactly one of the $B_i$.
		\item For any $g \in G$ and any $1 \leq i \leq k$, $$B_i g \cap B_i \neq \emptyset \implies B_i g =B_i.$$   
	\end{enumerate}
	Furthermore, we say that a block system is \emph{proper} if no $B_i$ contains an orbit and \emph{trivial} if every $B_i$ is a singleton. We say that a block system is \emph{finite} if $|B_i|<\infty$ for $i=1,\ldots, k$.
\end{definition}

It is easy to see that a block system encodes a $G$-partition and vice versa. 

\begin{lemma}
	Let $G$ act on $S$ with a block system, $\{ B_1, \ldots, B_k \}$. Then there exists a congruence, $\sim$ (which is to say a $G$-equivariant equivalence relation on $S$) generated by $\{ B_1, \ldots, B_k \}$  in the following sense: 
	$$
	\omega \sim \delta \iff \omega, \delta \in B_i g \text{ for some } 1 \leq i \leq k, g \in G.
	$$
	
	The sets $B_i$ are then distinct equivalence classes with respect to $\sim$. 
\end{lemma}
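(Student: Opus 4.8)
The plan is to verify the two assertions in turn: first, that the relation $\sim$ defined by
$$\omega \sim \delta \iff \omega, \delta \in B_i g \text{ for some } 1 \leq i \leq k,\, g \in G$$
is a $G$-equivariant equivalence relation, and second, that the sets $B_i$ are precisely $k$ of its equivalence classes. The $G$-equivariance is essentially immediate: if $\omega \sim \delta$ via $\omega,\delta \in B_i g$, then $\omega h, \delta h \in B_i (gh)$, so $\omega h \sim \delta h$. Reflexivity needs only that each orbit meets some $B_i$ (condition (2)): given $\omega \in S$, choose $i$ with $\omega G \cap B_i \neq \emptyset$, say $\beta \in B_i$ with $\beta = \omega g$; then $\omega \in B_i g^{-1}$. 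Symmetry is trivial from the definition.

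The one genuinely substantive point — and the main obstacle — is transitivity, which is where condition (3) (the block property of each $B_i$) is needed. Suppose $\omega \sim \delta$ via $\omega, \delta \in B_i g$ and $\delta \sim \eta$ via $\delta, \eta \in B_j h$. Since $\delta$ lies in a single $G$-orbit and each orbit meets exactly one of the $B_\ell$ (condition (2)), and since $\delta \in B_i g$ forces $\delta$'s orbit to meet $B_i$ while $\delta \in B_j h$ forces it to meet $B_j$, we get $i = j$. Now $\delta \in B_i g \cap B_i h$, so $\delta h^{-1} \in B_i (g h^{-1}) \cap B_i$, which is nonempty; the block property (3) then gives $B_i(gh^{-1}) = B_i$, i.e. $B_i g = B_i h$. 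Hence $\omega, \eta \in B_i g$, so $\omega \sim \eta$. (The same argument with $g = h$ or with one of the translates equal to the identity covers the degenerate cases, so no separate treatment is needed.)

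It remains to identify the equivalence classes. First, each $B_i$ is contained in a single $\sim$-class: any two elements of $B_i$ lie in $B_i = B_i \cdot 1$, hence are related. Second, the $\sim$-class of any point in $B_i$ is exactly $B_i$: if $\omega \in B_i$ and $\omega \sim \delta$, then $\delta \in B_j g$ with also $\omega \in B_j g$ for the same $j, g$; as above, the orbit of $\omega$ meets both $B_i$ (since $\omega \in B_i$) and $B_j$ (since $\omega \in B_j g$ means $\omega g^{-1} \in B_j$... more precisely $\omega$'s orbit meets $B_j$), so $i = j$; then $\omega \in B_i \cap B_i g$ gives $\omega g^{-1} \in B_i \cap B_i g^{-1}$, nonempty, so by (3) $B_i g = B_i$, whence $\delta \in B_i$. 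Thus the $\sim$-class of any element of $B_i$ equals $B_i$. Finally, these classes are distinct: since each $G$-orbit meets exactly one $B_i$, and an orbit meeting $B_i$ is contained in the union of translates of $B_i$ (hence disjoint from every translate of $B_j$ for $j \neq i$, by the same orbit-counting), the sets $B_1, \ldots, B_k$ pairwise intersect no common orbit and so are pairwise disjoint, giving $k$ distinct classes. I would present this compactly, flagging that condition (3) is used exactly twice — once for transitivity, once to pin down the class of an element of $B_i$.
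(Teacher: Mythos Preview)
Your proof is correct and follows essentially the same approach as the paper: reflexivity from condition (ii), transitivity from conditions (ii) and (iii) combined exactly as you do, and equivariance/symmetry immediate. The paper is terser on the final claim (it simply notes that the $B_i$ are disjoint), whereas you spell out more fully why each $B_i$ is an entire $\sim$-class rather than merely contained in one; your extra care there is justified and the argument is sound.
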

\begin{proof}
	Note that condition (ii) of Definition~\ref{blocksystem} implies that every $\omega \in S$ is in some $B_i g$. Hence $\sim$ is reflexive. Symmetry and equivariance of $\sim$ are immediate. 
	
	For transitivity, consider $\omega, \delta, \gamma \in S$ such that $\omega\sim \delta$ and $\delta \sim \gamma$. Then $ \omega, \delta \in B_i g$, and $\delta, \gamma \in B_j h$ for some $i,j$ and $g, h \in G$. Condition (ii) of Definition~\ref{blocksystem} implies that $i=j$, and condition (iii) implies that $B_ig = B_ih$ and hence $\omega \sim \gamma$. 
	
	The final statement is clear, since the sets $B_i$ are disjoint.    
\end{proof}

Conversely, we have the following: 

\begin{lemma}
	Suppose $G$ acts on $S$ with finitely many orbits. Let $\sim$ be a congruence on $S$. Then there exists a block system $\{ B_1, \ldots, B_k \}$ which generates $\sim$. 
\end{lemma}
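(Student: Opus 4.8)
The plan is to reverse the construction of the previous lemma. Given a congruence $\sim$ on $S$ with $G$ acting with finitely many orbits, the idea is to pick one orbit representative orbit at a time and use the $\sim$-class it lies in as a block.

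\medskip

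\textit{Construction of the candidate.} Let $\mathcal{O}_1, \ldots, \mathcal{O}_k$ be the $G$-orbits on $S$ (finitely many by hypothesis). For each $i$, choose a point $p_i \in \mathcal{O}_i$ and let $B_i$ be the $\sim$-equivalence class of $p_i$, i.e. $B_i = \{ s \in S : s \sim p_i \}$. I claim $\{ B_1, \ldots, B_k \}$ is a block system generating $\sim$.

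\medskip

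\textit{Verifying the block system axioms.} Condition (i) is immediate. For condition (iii): if $B_i g \cap B_i \neq \emptyset$, pick $s \in B_i g \cap B_i$. Then $s \sim p_i$ and $s g^{-1} \sim p_i$ (the latter using $G$-equivariance of $\sim$: $s \in B_i g$ means $s g^{-1} \in B_i$, i.e. $s g^{-1} \sim p_i$). So $p_i g \sim s \sim p_i$... wait, I need $p_i g$ related to things: from $s \sim p_i$ and equivariance $s g^{-1} \sim p_i g^{-1}$, combined with $s g^{-1} \sim p_i$ gives $p_i g^{-1} \sim p_i$, hence $p_i \sim p_i g$, so for any $t \in B_i$, $t \sim p_i \sim p_i g$, and $t g \sim p_i g g = p_i g^2$... let me instead argue directly: $B_i g$ is the $\sim$-class of $p_i g$ (since $\sim$ is $G$-equivariant, translating a class by $g$ gives a class), and $p_i g^{-1} \sim p_i$ forces (by equivariance) $p_i \sim p_i g$, so the class of $p_i g$ equals the class of $p_i$, i.e. $B_i g = B_i$. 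This gives (iii). For condition (ii), I must show each $G$-orbit meets exactly one $B_i$. It meets at least one: the orbit $\mathcal{O}_i$ contains $p_i \in B_i$. Wait — but I also need that $B_i$'s choice is consistent, i.e. that after translating, each orbit meets exactly one block among the $k$ blocks, which are fixed sets. An orbit $\mathcal{O}_j$ meets $B_i$ iff some point of $\mathcal{O}_j$ is $\sim p_i$; but points of $\mathcal{O}_j$ form a single orbit and if one of them is $\sim p_i$ then $p_j$ is in the same orbit, and one shows $\mathcal{O}_j$ can only be $\sim$-related to one chosen representative class. The cleanest route: condition (ii) says each orbit meets exactly one $B_i$; since $B_i \subseteq$ is a subset of $S$ not necessarily inside one orbit — actually $B_i$ may straddle several orbits. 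The assertion "each $G$-orbit meets exactly one of the $B_i$" needs: $\mathcal{O}_j \cap B_i \neq \emptyset$ and $\mathcal{O}_j \cap B_{i'} \neq \emptyset$ implies $i = i'$. If $q \in \mathcal{O}_j \cap B_i$ and $q' \in \mathcal{O}_j \cap B_{i'}$, then $q = q' g$ for some $g$, so $B_i$ (class of $q$) and $B_{i'} g^{-1}$ (class of $q g^{-1} = q'$... hmm) — use: class of $q$ is $B_i$, class of $q'$ is $B_{i'}$, and $q, q'$ in same orbit doesn't force same class in general. So condition (ii) as literally stated is \emph{false} for an arbitrary choice of representatives unless one is more careful — this is the main obstacle.

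\medskip

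\textit{Resolving the obstacle.} The fix is that the $B_i$ should not be chosen independently: we must choose them so that distinct orbits map to distinct blocks only when they are genuinely $\sim$-separated. Concretely, the number of blocks $k$ in the block system need not equal the number of orbits; rather, we group the orbits according to whether they meet a common $\sim$-class. Precisely: say orbits $\mathcal{O}, \mathcal{O}'$ are \emph{linked} if there exist $x \in \mathcal{O}$, $x' \in \mathcal{O}'$ with $x \sim x'$; this is symmetric but I should check it is an equivalence — actually linkage need not be transitive a priori, but one can take the equivalence relation it generates, partition the (finitely many) orbits into linkage-classes $C_1, \ldots, C_m$, and for each $C_\ell$ pick a point $p_\ell$ in one of its orbits and set $B_\ell = [p_\ell]_\sim$. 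Then $B_\ell$ meets only the orbits in $C_\ell$ (this is exactly why we need the finiteness / linkage-class argument, and here transitivity of $\sim$ does the work: if $[p_\ell]_\sim$ met two orbits in different linkage classes those classes would be linked), every orbit meets exactly one $B_\ell$, and conditions (i), (iii) go through as above. Finally, that $\{B_\ell\}$ \emph{generates} $\sim$ in the sense of the previous lemma: one must check $x \sim y \iff x, y \in B_\ell g$ for some $\ell, g$. The forward direction: if $x \sim y$ then $[x]_\sim = [y]_\sim$; $x$ lies in some orbit in some linkage class $C_\ell$, and one shows $[x]_\sim = B_\ell g$ for a suitable $g$ (translate $p_\ell$ to hit the class of $x$ — possible because $p_\ell$ and $x$ lie in linked orbits, and by equivariance translating $p_\ell$ sweeps out all classes that meet $p_\ell$'s linkage class... this needs a small argument that within a linkage class, $G$ acts transitively on the set of $\sim$-classes meeting it). The reverse direction is immediate from $G$-equivariance of $\sim$. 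The one genuinely delicate point, and the step I expect to take the most care, is proving that $G$ acts transitively on the $\sim$-classes within a single linkage class — that is what makes a single block $B_\ell$ (up to $G$-translation) recover all of $\sim$ on those orbits — and this is where transitivity of $\sim$ together with equivariance and the orbit structure must be combined.
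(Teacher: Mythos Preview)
Your final approach via linkage classes is correct, but you have taken a substantially longer route than the paper and left the key step unproved. The paper's argument is one sentence: since $\sim$ is a congruence, $G$ acts on the quotient set $S/\sim$; this action has finitely many orbits (the quotient map $S\to S/\sim$ is $G$-equivariant and surjective), so choose one $\sim$-class from each $G$-orbit on $S/\sim$. Conditions (i)--(iii) and the generation of $\sim$ are then immediate, because the $B_\ell$ are by construction a transversal for the $G$-action on the set of $\sim$-classes.

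Your linkage classes of $G$-orbits on $S$ are exactly the $G$-orbits on $S/\sim$ in disguise: two orbits $\mathcal{O},\mathcal{O}'$ are linked if and only if some (equivalently every) $\sim$-class meeting $\mathcal{O}$ is a $G$-translate of some $\sim$-class meeting $\mathcal{O}'$. Once you see this, both your worries dissolve. Linkage \emph{is} transitive (if $x_1\sim x_2$ with $x_i\in\mathcal{O}_i$ and $y_2\sim y_3$ with $y_j\in\mathcal{O}_j$, write $y_2=x_2 g$ and observe $x_1 g\sim y_3$), so there is no need to pass to the generated equivalence relation. And the ``delicate point'' --- that $G$ acts transitively on the $\sim$-classes within a linkage class --- is a two-line computation of the same flavour: given $[x],[y]$ with $x\in\mathcal{O}_a$, $y\in\mathcal{O}_b$ linked via $x'\sim y'$, write $x'=xg$, $y'=yh$ to get $[y]=[x]gh^{-1}$. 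You correctly identified this as the crux but did not supply the argument; it is short and you should include it. After that your verification of (ii) and of generation goes through.

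In short: your repaired construction is right, but recasting it as ``pick one $\sim$-class per $G$-orbit on $S/\sim$'' collapses the whole argument to a paragraph and makes the block-system axioms and the generation claim essentially tautological.
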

\begin{proof}
	Since $G$ acts on $S$ with finitely many orbits, there can only be finitely many $G$-orbits of $\sim$-equivalence classes. Simply take one equivalence class per $G$-orbit to get $\{ B_1, \ldots, B_k \}$. It is straightforward to verify that this is a block system which generates $\sim$.  
\end{proof}

\begin{remark}
	In our case where $G\le H_n$ has full Hirsch length and block system $\{ B_1, \ldots, B_k \}$, we may have that $k$ is strictly less than the number of $G$-orbits in $\Ra_n$. This happens precisely when some $B_i$ meets more than one orbit. 
\end{remark}

The following is now clear. 

\begin{lemma}
	Let $G$ act on a set $S$ with finitely many orbits. Then the action is strongly orbit primitive if and only if the action admits no proper, non-trivial block system.  
\end{lemma}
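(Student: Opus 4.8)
The plan is to deduce the lemma from the dictionary between $G$-partitions of $S$ and block systems established in the two preceding lemmas, together with one reformulation of strong orbit primitivity. Since $G$ has only finitely many orbits, if a part $P$ of a $G$-partition contains a $G$-orbit then for every $g$ the translate $Pg$ contains that orbit as well, so $Pg\cap P\neq\emptyset$ and hence $Pg=P$; thus $P$ is $G$-invariant, i.e.\ a union of orbits. Consequently the action \emph{fails} to be strongly orbit primitive precisely when some $G$-partition of $S$ has a part that is neither a singleton nor contains a $G$-orbit; call such a part \emph{bad}. In this language the lemma reads: a bad $G$-partition exists if and only if a proper, non-trivial block system exists.

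For the ``only if'' direction I would argue the contrapositive. Suppose $\{B_1,\dots,B_k\}$ is a proper, non-trivial block system. By the first of the two preceding lemmas it generates a congruence $\sim$ having $B_1,\dots,B_k$ among its equivalence classes; let $\mathcal{P}$ be the resulting $G$-partition. Non-triviality gives some $B_j$ with $|B_j|\ge 2$, while properness says that no $B_i$ contains an orbit; hence $B_j$ is a bad part of $\mathcal{P}$, so the action is not strongly orbit primitive.

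For the ``if'' direction, suppose $\mathcal{P}$ is a $G$-partition with a bad part $P$, and let $T:=PG=\bigcup_{g\in G}Pg$, which is a union of $G$-orbits. I would first replace $\mathcal{P}$ by the $G$-partition $\mathcal{P}'$ whose parts are the translates $\{Pg:g\in G\}$ on $T$ (these are permuted by $G$ since $P$ is a block) together with all singletons of $S\setminus T$; $G$-invariance of $\mathcal{P}'$ is immediate. Applying the second preceding lemma — this is the step that uses finiteness of the orbit set — yields a block system generating the congruence of $\mathcal{P}'$, and inspecting the $G$-orbits of equivalence classes shows it may be taken to be $\{P\}\cup\{\{s_{\mathcal{O}}\}: \mathcal{O}\text{ a }G\text{-orbit contained in }S\setminus T\}$ for suitable $s_{\mathcal{O}}\in\mathcal{O}$. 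It is non-trivial because $|P|\ge 2$, and it is proper because $P$ contains no orbit by hypothesis and a singleton $\{s_{\mathcal{O}}\}$ contains its orbit $\mathcal{O}$ only if $\mathcal{O}$ is a singleton.

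The main obstacle, and the only delicate point, is exactly that last clause: a fixed point of $G$ outside $T$ would force every block system to contain a non-proper (singleton-orbit) part, so the ``if'' direction genuinely needs that $G$ has no singleton orbit. This costs nothing in the present paper, where the lemma is only ever applied with $G$ acting on $\Ra_n$ with all orbits infinite, so no fixed point exists and the argument above is complete.
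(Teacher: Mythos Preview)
Your argument is correct and is exactly what the paper has in mind: it offers no proof beyond ``The following is now clear,'' relying on the dictionary between $G$-partitions and block systems set up in the two preceding lemmas. Your caveat about fixed points is a genuine observation---the lemma as stated fails if $G$ has a singleton orbit, since then no block system can be proper while strong orbit primitivity may still fail---but, as you note, this never arises in the paper's applications, where all orbits are infinite.
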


\begin{remark}
	The condition of having finitely many orbits is only for convenience, since that will be our main concern. In general, the number of blocks in a block system could be a cardinal rather than a natural number. 
\end{remark}

\begin{remark}
	If one returns to Example~\ref{ex:not strongly orbit primitive}, we see that $\Delta$ admits a proper non-trivial block system consisting of a single block; $\{ 1, n+1\}$.	This shows how we do not require that each $B_i$ only meets a single orbit. 
\end{remark}

\subsection*{Jordan-Wielandt Theorem}

In order to motivate the discussion, we recall a theorem relating to general permutation groups which may be found in Cameron's text \cite{CameronPG}. Cameron indicates that it is proved using an argument of Wielandt originally devised to apply to finite groups. In Section \ref{sec:cameron} we generalise Theorem \ref{cameron} to intransitive actions.

\begin{theorem}[\cite{CameronPG}, Theorem 6.1]\label{cameron}
	An infinite primitive permutation group which contains a non-identity finitary permutation contains the alternating group.
\end{theorem}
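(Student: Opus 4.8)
The plan is to prove the theorem by the classical Jordan--Wielandt route: first reduce the goal to exhibiting a single short-support element (a transposition or a $3$-cycle) inside $G$, and then use primitivity to \emph{spread} such an element to all of $\Alt(\Omega)$.

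I would begin by recording the structural starting point. Write $N := G \cap \FSym(\Omega)$ for the subgroup of finitary elements of $G$. Since $\FSym(\Omega)$ is normal in $\Sym(\Omega)$, $N$ is normal in $G$, and by hypothesis $N \neq 1$. A nontrivial normal subgroup of a primitive group is transitive: as $N \normal G$, the group $G$ permutes the $N$-orbits, so they form a $G$-invariant partition, which by primitivity is either all singletons (forcing $N=1$, a contradiction) or the single block $\Omega$. Hence $N$ is transitive. In fact all I really need downstream is that $G$ is transitive and that a finite subset of the infinite set $\Omega$ is never a nontrivial block; both are immediate from primitivity.

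Next comes the reduction. Among the nonidentity elements of $N$ choose one, $g$, whose support $M = \supp(g)$ has minimal cardinality $m := |M| \geq 2$. If $m=2$ then $g$ is a transposition, and if $m=3$ then $g$ is a $3$-cycle; in either case I pass directly to the spreading step. The heart of the matter is to exclude $m \geq 4$, for which I would run Wielandt's minimal-degree argument. Because $\Omega$ is infinite and $2 \leq m < \infty$, the finite set $M$ is not a block of the primitive group $G$, so there is $x \in G$ with $M \cap Mx \neq \emptyset$ and $Mx \neq M$; setting $g' = g^{x}$ (a conjugate, hence of support size $m$ with $\supp(g') = Mx$), a suitable word in $g$ and $g'$ --- typically the commutator $[g,g']$ for an $x$ chosen so that the supports overlap in as few points as possible --- is a nonidentity finitary element of strictly smaller support, contradicting minimality and thus forcing $m \leq 3$. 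I expect this to be the main obstacle: the crude bound $\supp([g,g']) \subseteq M \cup Mx$ is too weak (indeed for an overlap of size $\geq 2$ the plain commutator need not shrink), so the real work is to select the conjugating element $x$ (and, if necessary, to iterate the construction) so that cancellation on the overlap genuinely decreases the support while keeping the element nontrivial.

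Finally I would carry out the spreading step in full. Suppose $G$ contains a $3$-cycle (the transposition case is identical with $\Alt$ replaced by $\FSym$, and yields even more). Define a relation on $\Omega$ by declaring $\alpha \sim \beta$ when $\alpha=\beta$, or when there is a finite $\Delta$ with $\{\alpha,\beta\} \subseteq \Delta$, $|\Delta| \geq 3$, and $\Alt(\Delta) \leq G$. Using the standard fact that for finite sets $A,B$ of size $\geq 3$ with $A \cap B \neq \emptyset$ one has $\langle \Alt(A), \Alt(B)\rangle = \Alt(A\cup B)$ --- proved by conjugating a $3$-cycle $(\alpha\beta\gamma) \in \Alt(A)$ by $\Alt(B)$ to obtain all $3$-cycles $(\alpha\beta y)$, $y \in B$ --- this relation is transitive, hence an equivalence relation; it is $G$-invariant, since $\Alt(\Delta)^{g} = \Alt(\Delta g) \leq G$. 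Its classes therefore form a $G$-invariant partition. The assumed $3$-cycle $(\alpha\beta\gamma)$ gives $\alpha \sim \beta$, so the partition is not all singletons, and by primitivity it is the single block $\Omega$: all points are equivalent. Hence for any three points $\alpha,\beta,\gamma$ there is, by combining two witnessing sets sharing a point, a single finite $\Delta \supseteq \{\alpha,\beta,\gamma\}$ with $\Alt(\Delta) \leq G$, so $(\alpha\beta\gamma) \in \Alt(\Delta) \leq G$. Since the $3$-cycles generate $\Alt(\Omega)$ (as in the argument of Lemma~\ref{lem:AltisSimple}), we conclude $\Alt(\Omega) \leq G$, as required.
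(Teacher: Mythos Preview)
The paper does not supply a proof of this theorem; it is quoted from Cameron's book \cite{CameronPG} and used as a black box, so there is no in-paper argument to compare against.

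Your overall route is the classical Jordan--Wielandt one, and your spreading step (the $G$-invariant equivalence relation built from the condition $\Alt(\Delta)\le G$, combined with $\langle\Alt(A),\Alt(B)\rangle=\Alt(A\cup B)$ when $|A|,|B|\ge 3$ and $A\cap B\ne\emptyset$) is correct and complete. The reduction to a $3$-cycle, however, is not: you explicitly concede that ``the crude bound $\supp([g,g'])\subseteq M\cup Mx$ is too weak'' and that selecting $x$ is ``the real work'', and then stop. This is not a technicality but essentially the content of the theorem, and ``overlap in as few points as possible'' is the wrong heuristic: what matters is \emph{which} points of $M$ lie in $M\cap Mx$ relative to the cycle structure of $g$, not how many. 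One standard completion first bootstraps from primitivity plus a finitary element to $k$-transitivity for every $k$; once $(m{-}1)$-transitivity is available you may choose $x$ fixing $M\setminus\{\gamma\}$ pointwise and sending $\gamma$ outside $M$ (for a cycle $(\dots\beta\,\gamma\,\delta\dots)$ of $g$), whereupon a direct calculation gives $g^{-1}g^{x}=(\gamma\ \gamma x\ \delta)$, a $3$-cycle. The case where $g$ is a product of $\ge 2$ disjoint transpositions needs a separate computation. As written, your argument is a correct plan with its key lemma left unproved.
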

Applying this theorem to the case of the Houghton groups provides information about our Theorem~\ref{main} in the case where $G\le H_n$ acts primitively. In particular, the following corollary shows that such a $G$ must have finite index. Then, since finiteness properties are commensurability invariants, we have that $G$ is of type $\FFF_{n-1}$ but not $\FP_n$. Recall, given any $g\in H_n$, that $t(g)=(t_1(g), \ldots, t_n(g))$ denotes the translation vector of $g$.

\begin{remark*} It is clear that if $G\le_fH_n$, then $G$ has full Hirsch length.
\end{remark*}
\begin{corollary}\label{cameron2} Let $n\in \{2, 3, \ldots\}$ and $G$ be a subgroup of $H_n$ of full Hirsch length. Then $G\le_f H_n$  if and only if $G$ acts primitively on $\Ra_n$.
\end{corollary}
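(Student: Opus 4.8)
The plan is to prove the two implications separately, the unifying observation being that in either direction $G$ ends up containing the full alternating group $\Alt(\Ra_n)$, and that primitivity transfers freely between a group and a transitive overgroup acting on the same set.

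For the forward implication, suppose $G\le_f H_n$. Then $\gfin=G\cap\FSym(\Ra_n)$ has finite index in $\FSym(\Ra_n)$, hence $\gfin\cap\Alt(\Ra_n)=G\cap\Alt(\Ra_n)$ has finite index in $\Alt(\Ra_n)$. But $\Alt(\Ra_n)$ is infinite and simple (Lemma~\ref{lem:AltisSimple}), so it has no proper subgroup of finite index: the normal core of such a subgroup would be a proper normal subgroup of finite index, forcing $\Alt(\Ra_n)$ to be finite. Therefore $\Alt(\Ra_n)\subseteq\gfin\subseteq G$. Since $\Alt(\Ra_n)$ is highly transitive on the infinite set $\Ra_n$ it acts primitively, and $G$ is transitive because it contains $\Alt(\Ra_n)$; any $G$-invariant partition of $\Ra_n$ is a fortiori $\Alt(\Ra_n)$-invariant, hence trivial, so the action of $G$ on $\Ra_n$ is primitive.

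For the reverse implication, suppose $G$ acts primitively on $\Ra_n$; in particular $G$ is transitive and, since $\Ra_n$ is infinite, $G$ is infinite. The first step is to produce a non-identity finitary permutation in $G$. When $n\ge 3$, Lemma~\ref{lem:finitaryonallorbits} supplies $\sigma\in\gfin$ whose support meets every infinite orbit of $G$; as $\Ra_n$ is the unique orbit and is infinite, $\sigma\ne 1$. When $n=2$, if $\gfin$ were trivial then full Hirsch length would give $G\cong G/\gfin\cong\Z$, so a transitive abelian group, hence regular, with trivial point stabilisers; but $\{1\}$ is not a maximal subgroup of $\Z$, contradicting primitivity, so again $\gfin\ne 1$. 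In either case $G$ is an infinite primitive permutation group containing a non-identity finitary permutation, so Cameron's theorem (Theorem~\ref{cameron}) gives $\Alt(\Ra_n)\subseteq G$, and hence $\Alt(\Ra_n)\subseteq\gfin\subseteq\FSym(\Ra_n)$. It remains to estimate $[H_n:G]=[H_n:G\FSym(\Ra_n)]\cdot[G\FSym(\Ra_n):G]$. The first factor is finite because, $G$ having full Hirsch length, Lemma~\ref{lem:structureofkfin} tells us the image of $G$ in the abelianisation $H_n/\FSym(\Ra_n)\cong\Z^{n-1}$ has finite index. The second factor equals $[\FSym(\Ra_n):\gfin]$, which is $1$ or $2$ since $\Alt(\Ra_n)\subseteq\gfin\subseteq\FSym(\Ra_n)$ and $[\FSym(\Ra_n):\Alt(\Ra_n)]=2$. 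Therefore $[H_n:G]<\infty$.

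The main obstacle is the production of a non-identity finitary permutation in the reverse direction: primitivity alone does not obviously force $\gfin\ne 1$, and the argument genuinely relies on the full-Hirsch-length hypothesis (note that $\Alt(\Ra_n)$ is itself primitive but of infinite index in $H_n$). For $n\ge 3$ this is dispatched by Lemma~\ref{lem:finitaryonallorbits}; for $n=2$ one needs the separate regular-action argument. Everything else is bookkeeping with the two standard facts $[\FSym(\Ra_n):\Alt(\Ra_n)]=2$ and $H_n/\FSym(\Ra_n)\cong\Z^{n-1}$.
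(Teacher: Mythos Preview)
Your proof is correct and follows essentially the same approach as the paper: both directions hinge on showing $\Alt(\Ra_n)\subseteq G$, using simplicity of $\Alt(\Ra_n)$ in the forward direction and Cameron's theorem in the reverse. The only minor difference is that for $n\ge 3$ you invoke Lemma~\ref{lem:finitaryonallorbits} to produce a non-identity finitary element, whereas the paper's primary argument appeals to St.~John-Green's bound on the rank of abelian subgroups of $H_n$ (showing $G\cong\Z^{n-1}$ is impossible); the paper does, however, offer exactly your commutator-based alternative as a second route, so the two proofs are effectively interchangeable.
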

\begin{proof} Assume $G\le_fH_n$. Then $G\cap \Alt(\Ra_n)\le_f \Alt(\Ra_n)$ and, since $\Alt(\Ra_n)$ is infinite and simple (by Lemma \ref{lem:AltisSimple}), it cannot admit any proper finite index subgroup and so $\Alt(\Ra_n)\le G$. The action of $\Alt(\Ra_n)$ on $\Ra_n$ is primitive and therefore so is the action of $G$.
	
	For the converse, assume $G$ acts primitively on $\Ra_n$. Then $G$ has full Hirsch length means that
	$$\quotient{G\FSym(\Ra_n)}{\FSym(\Ra_n)}\le_f\quotient{H_n}{\FSym(\Ra_n)}
	\Leftrightarrow G\FSym(\Ra_n)\le_fH_n.
	$$
	This holds if and only if $G\Alt(\Ra_n)\le_fH_n$, since $\Alt(\Ra_n)$ has index 2 in $\FSym(\Ra_n)$. Hence it is sufficient to show that $\Alt(\Ra_n)\le G$ and, by Theorem \ref{cameron}, we need only show that $G\cap \FSym(\Ra_n)\ne 1$. 
	
	For $n=2$, if $G\cap \FSym(\Ra_n)=1$, then $G\cong \Z$ which contradicts the fact that $G$ acts primitively (for instance, point stabilisers would need to be trivial and hence not maximal). If $n\ge 3$, then $G\cap \FSym(\Ra_n)=1$ implies that $G\cong \Z^{n-1}$ which cannot occur as $\lfloor \frac{n}{2}\rfloor$ is the maximum rank of a free abelian subgroup of $H_n$ \cite[Theorem 3.4]{Simon}. Alternatively, for $n\ge 3$, a subgroup of full Hirsch length contains, for some $d\in \N$, elements $g$ and $h$ such that $t(g)$ and $t(h)$ have just two non-zero entries, specfically $t_1(g)=t_1(h)=d$, $t_2(g)=t_3(h)=-d$. Taking the commutator of $g$ and $h$ produces a non-identity finitary permutation.
\end{proof}

\subsection*{Motivating Example}

The following example is helpful to have in mind, as a lynchpin of the general case. 

\begin{example} We will introduce a particular level subgroup of $H_n$ of full Hirsch length.
Take $\Ra_n=\{1, \ldots, n\}\times \N$, fix some $k\in \N$ and for each $i\in \{1, \ldots, k\}$ define $\N_i:=\{n\in \N : n\equiv i\mod k\}$ and $\Omega_i:=\{1, \ldots, n\}\times \N_i$. Hence $\Ra_n=\Omega_1\sqcup \cdots\sqcup \Omega_k$.

With the notation above, define $\Delta_k:=\bigcap_{i=1}^k \stab_{H_n}(\Omega_i)$. In $H_n$ we have that $\Delta_k$ is the multi-set stabiliser of $\{\Omega_1, \ldots, \Omega_k\}$. We make some further observations.
\begin{itemize}
\item Each $\Omega_i$ is order isomorphic to $\Ra_n$.
\item $\FSym(\Ra_n)\cap \Delta_k=\prod_{i=1}^k \FSym(\Omega_i)$.
\item We have a short exact sequence $1\to \prod_{i=1}^k \FSym(\Omega_i)\to \Delta_k\to (k\Z)^{n-1}\to 1$ where the last map is simply the translation map.
\item The orbits of $\Delta_k$ on $\Ra_n$ are exactly the sets $\Omega_1, \ldots, \Omega_k$.
\item For each $r$, $\Delta_k$ is $r$-transitive on each orbit. 
\end{itemize}
\end{example}

The idea for this example is to take the points on the rays ``modulo $k$'' and look at the stabiliser of those sets. This gives an example of a subgroup of full Hirsch length. Our strategy is to show that, in a certain sense, it is the typical example.

\subsection*{Returning to Houghton Groups}

We now return to Houghton groups acting on the ray system and prove some lemmas about blocks which may arise.

\begin{lemma}\label{4.3} Let $G \leq H_n$ acting on the ray system $\Ra_n$. 
If $B$ is an infinite block for $G$, then $B=B\cdot\gfin$.
\end{lemma}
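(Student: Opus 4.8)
The plan is to prove the equivalent statement that every element of $\gfin$ stabilises $B$ setwise; the asserted identity $B = B\cdot\gfin$ then follows at once by taking the union over all $\sigma\in\gfin$ of the sets $B\sigma$, each of which equals $B$.

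So I would fix an arbitrary $\sigma\in\gfin = G\cap\FSym(\Ra_n)$ and argue as follows. By definition $\supp(\sigma)$ is finite, whereas $B$ is infinite by hypothesis, so $B\setminus\supp(\sigma)\ne\emptyset$. Pick any $b\in B\setminus\supp(\sigma)$; then $b\sigma = b$, so $b\in B\cap B\sigma$ and in particular $B\sigma\cap B\ne\emptyset$. Since $\sigma\in G$, the defining property of a block (applied with $g=\sigma$) forces $B\sigma = B$. As $\sigma$ was arbitrary in $\gfin$, we conclude $B\cdot\gfin = \bigcup_{\sigma\in\gfin}B\sigma = B$.

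There is no real obstacle here: the entire content is that a finitary permutation fixes all but finitely many points, so an \emph{infinite} block necessarily meets its own image under any such permutation, and the block axiom then upgrades this nonempty intersection to equality. The only subtlety worth flagging explicitly in the write-up is that one must use $\gfin\le G$ in order for the block condition — which only controls translates $Bg$ with $g\in G$ — to be applicable to each $\sigma\in\gfin$.
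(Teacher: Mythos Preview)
Your proof is correct and is essentially the same as the paper's: the paper's two-line argument simply asserts that for any $f\in\gfin$ and any infinite subset $B$ one has $Bf\cap B\neq\emptyset$, and then invokes the block property. Your version spells out that nonemptiness by exhibiting a point $b\in B\setminus\supp(\sigma)$, and you rightly flag that $\gfin\le G$ is needed to apply the block axiom.
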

\begin{proof}
Choose any $f\in \gfin$. For any infinite subset $B$ we have $Bf\cap B\not=\emptyset$. Thus if $B$ is an infinite block, then $Bf=B$.
\end{proof}
For a block $B$ we can consider the \emph{set of translates of} $B$, which is $\{Bg : g\in G\}$. The following states that if $B$ is a finite block, then there can only be finitely many translates of $B$ which meet more than one ray.
\begin{lemma}\label{4.5}
Let $n\in \{2, 3, \ldots\}$, $G\le H_n$ have full Hirsch length and $B$ be a finite block with respect to the action of $G$ on $\Ra_n$. Then the set $$\{Bg : g\in G\textrm{ and }Bg\textrm{ meets more than one ray}\}$$ is finite.
\end{lemma}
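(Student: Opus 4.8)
\textit{Proof proposal.} I would argue by contradiction. Suppose the set $\mathcal T$ of translates $Bg$ meeting more than one ray is infinite, and write $m=|B|$. Since $B$ is a block, any two translates of $B$ are equal or disjoint, so the members of $\mathcal T$ are pairwise disjoint. Each member meets at least two of the finitely many rays $R_1,\dots,R_n$, so by pigeonhole there is a fixed pair of distinct rays $R_p,R_q$ that are both met by infinitely many members; enumerate these as $Bg_1,Bg_2,\dots$ and choose $u_t\in Bg_t\cap R_p$, $w_t\in Bg_t\cap R_q$. Pairwise disjointness forces the $u_t$ (and the $w_t$) to be distinct, so after reindexing the heights of $u_t$ and of $w_t$ both tend to infinity. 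When $n\ge3$ there is a slick finish: full Hirsch length lets us pick $\gamma\in G$ with $t_p(\gamma)=0\neq t_q(\gamma)$ (and zero on all other rays but a third one $R_r$, $r\notin\{p,q\}$); then $\gamma$ fixes $u_t$ for $t$ large, so the block property gives $Bg_t\gamma=Bg_t$, i.e.\ $Bg_t$ is a finite $\langle\gamma\rangle$-invariant set and hence misses every infinite cycle of $\gamma$ — yet $w_t$, being far out on $R_q$ with $t_q(\gamma)\neq0$, lies on an infinite cycle of $\gamma$ (iterate $\gamma^{\pm1}$), a contradiction. For the stated range $n\ge 2$ one cannot always fix a ray with a nontrivial element, so I would run a uniform variant as follows.

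Using that $t(G)$ has finite index $m_0$ in the zero-sum sublattice $Z\cong\Z^{n-1}$ of $\Z^n$, so that $m_0 z\in t(G)$ for all $z\in Z$, I would choose $z\in Z$ with \emph{every} coordinate nonzero and with $z_p,z_q$ of opposite signs (a short arithmetic check shows this is possible for every $n\ge2$, the tight cases being $n=2$, where only $(1,-1)$ is available, and $n=3$); renaming $p,q$ if necessary, take $\gamma\in G$ with $t(\gamma)=m_0 z$, so all $t_i(\gamma)\neq0$ and $t_q(\gamma)<0<t_p(\gamma)$. By Lemma~\ref{3.1}, $\gamma$ is a product of finitely many disjoint cycles; since no $t_i(\gamma)$ vanishes its support is cofinite, so the set $E$ obtained by deleting the (finitely many) infinite cycles of $\gamma$ from $\Ra_n$ is finite and $\gamma$-invariant. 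Let $h_E$ bound the heights of points of $E$. A routine analysis of one infinite cycle $\{y_\ell\}_{\ell\in\Z}$ of $\gamma$ shows that for $\ell\gg0$ the point $y_\ell$ lies on a single ray $R_i$ with $t_i(\gamma)>0$, with height $\to\infty$ (and symmetrically as $\ell\to-\infty$). Hence there is $K_0$ such that for $k\ge K_0$ every infinite-cycle point of $Bg_{t_0}\gamma^{k}$ lies on a positive-$t$ ray at height $>h_E$; in particular $Bg_{t_0}\gamma^{k}\cap R_q$ then consists only of points of $E$ and so has height $\le h_E$. Symmetrically, $Bg_{t_0}\gamma^{k}\cap R_p$ has height $\le h_E$ once $k\le -K_0'$.

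Finally I would use the block property to confine the $Bg_t$ to a single $\langle\gamma\rangle$-orbit. For $t$ large, $u_t$ lies on $R_p$ at height beyond the translation threshold of $\gamma$, hence on an infinite cycle of $\gamma$; pigeonholing on the finitely many infinite cycles, infinitely many $u_t$ lie on one cycle $C$. Fix such a $t_0$; for any other such $t$, the points $u_t,u_{t_0}\in C$ differ by $\gamma^{k_t}$ for a unique $k_t\in\Z$, so $Bg_{t_0}\gamma^{k_t}$ is a translate of $B$ containing $u_t\in Bg_t$, forcing $Bg_{t_0}\gamma^{k_t}=Bg_t$; distinctness of the $Bg_t$ makes the $k_t$ distinct. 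But $Bg_t$ meets $R_q$ at $w_t$ with height $>h_E$ for $t$ large, so $k_t<K_0$; and it meets $R_p$ at $u_t$ with height $>h_E$ for $t$ large, so $k_t>-K_0'$. Thus the $k_t$ lie in a bounded interval — impossible for infinitely many distinct integers. Hence $\mathcal T$ is finite.

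The step I expect to be the main obstacle is the boundary behaviour of $\gamma$ near the starting points $(i,0)$ of the rays: this is precisely what compels the choice of $\gamma$ with all translation components nonzero (so that the ``leftover'' set $E$ is finite and the bounded-height estimates hold), and it is why the opposite-sign condition on $R_p,R_q$ must be met simultaneously with that. Checking that both requirements can always be satisfied — sharp when $n\in\{2,3\}$ — is a small but genuine point; everything else is bookkeeping with disjointness of block translates and with the cycle decomposition of Lemma~\ref{3.1}.
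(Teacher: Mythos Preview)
Your proof is correct and follows the same approach as the paper: pigeonhole to a fixed pair of rays, then use a suitable element of $G$ together with the block property and the cycle decomposition of Lemma~\ref{3.1} to reach a contradiction. The only difference is organisational---the paper splits into the easy $n\ge3$ case (identical to your shortcut, with the fixed and moved rays swapped) and a separate $n=2$ argument, whereas you unify both under a single choice of $\gamma$ with all translation components nonzero, absorbing the paper's explicit exclusion of translates mixing finite and infinite $\langle y\rangle$-orbit points into your height bound on the finite set $E$.
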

\begin{proof} 
The proof with at least 3 rays is somewhat easier so we start with that. So let us first assume that $n \geq 3$. 	
	
If infinitely many translates of $B$ meet more than one ray, then we can find $i\ne j$ such that infinitely many distinct translates of $B$ meet both the $i$th and the $j$th rays.
Then there is an element $y\in G$ with $t_i(y)>0 $ and $t_j(y)=0$. If $Bg$ is a translate of $B$ which meets both the $i$th and $j$th rays sufficiently far out that $y$ is acting by its translation vector then the repeated application of $y$ moves $Bg\cap R_i$ to a disjoint set but fixes $Bg\cap R_j$. This is not possible and therefore only finitely many translates of $B$ can meet both the $i$th and $j$th rays.

Now we prove the result for $n=2$. Choose some $y \in G$ such that $t_1(y) > 0$ (meaning that $t_2(y) = -t_1(y)$). We can then write $y$ as a product of finitely many disjoint cycles. In particular, $\langle y \rangle$ acts with finitely many orbits on $\Ra_2$ some of which are finite and some of which are infinite. Moreover, if $p \in \Ra_2$ is in an infinite $\langle y \rangle$ orbit, then $py^k$ is eventually in the first ray for some large (positive) power of $k$. This implies that a translate of $B$ cannot both contain a point, $q$, from some finite $\langle y \rangle$  orbit and also a point $p$ from some infinite $\langle y \rangle$  orbit, since in this case we would be able to find a positive integer $k$ such that $q y^k = q$ and $py^k$ is arbitrarily far in the first ray and hence not an element of this $B$ translate. Moreover, only finitely many translates of $B$ are contained in the finite $\langle y \rangle$ orbits. 

We will therefore restrict to considering translates of $B$ which only meet infinite $\langle y \rangle$  orbits. Let us argue by contradiction and suppose that there are infinitely many of these that meet both rays. Since $\langle y \rangle$  acts with finitely many orbits, we may also assume that these $B$ translates are all in the same $\langle y \rangle$ orbit. But now, without loss of generality, $B$ meets both rays and contains only points from infinite $\langle y \rangle$  orbits and we have infinitely many integers $m$ such that $By^{m}$ also meets both rays. However, for sufficiently large $k$, $By^k$ is contained in either the first ray (for large positive $k$) or the second ray (for large negative $k$). This contradiction proves the result for $n=2$.
\end{proof}

Note that, from the reduction in the previous section, we can assume that no points of $\Ra_n$ lie in a finite orbit of any group $G\le H_n$ of full Hirsch length.

\begin{lemma}\label{4.6}
Let $n\in \{2, 3, \ldots\}$, $G\le H_n$ have full Hirsch length and define $e:=|H_n:G\FSym|$. If $B$ is a finite block (with respect to the action of $G$ on $\Ra_n$) whose elements do not lie in a finite orbit of $G$, then $B$ contains at most $e$ elements.
\end{lemma}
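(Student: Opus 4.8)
The plan is to exploit the fact that translates of a finite block that stay on a single ray behave essentially like translates of a finite set under a one-dimensional Houghton-type action, so that counting them reduces to a congruence/index computation. First I would use Lemma~\ref{4.5}: only finitely many translates of $B$ meet more than one ray, so all but finitely many translates of $B$ lie inside a single ray $R_j$. Since by hypothesis no element of $B$ lies in a finite orbit of $G$, and (by Lemma~\ref{inforbs}) every infinite orbit meets every ray in an infinite set, the $G$-orbit of $B$ contains infinitely many translates inside any given ray. So fix a ray, say $R_1$, and consider the (infinite) collection $\mathcal{T}$ of translates $Bg \subseteq R_1$, each of size $|B|$.

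The key step is to show that any two translates $Bg, Bg' \subseteq R_1$ are related by an honest integer translation along $R_1$ that is realised by an element of $G$ whose translation vector is a multiple of $e$ in a suitable sense. Concretely, identify $R_1$ with $\N$; I would argue that the "gaps" pattern of $B$ inside $R_1$ is rigid, so that $Bg$ and $Bg'$ being order-isomorphic-far-out forces $Bg' = Bg + s$ for some integer $s$, and moreover the element taking one to the other can be arranged (using that these translates are far out on the ray and using Lemma~\ref{3.1} to control cycle structure) to have translation component $s$ on ray $1$, balanced on another ray, and zero elsewhere. Now the block condition enters: if $s$ is small relative to the diameter of $B$ then $Bg'$ and $Bg$ would overlap without being equal, violating the block axiom $B_i g \cap B_i \neq \emptyset \implies B_i g = B_i$ applied to the translate; hence consecutive translates along the ray are spaced by at least... — actually the cleaner route is: the set $S := \{ s \in \Z : B + s \text{ is a translate of } B \text{ inside } R_1 \text{ realised by } g\in G\}$ is a coset-union controlled by $\pi(G)$, and since $[H_n : G\FSym] = e$, the image $\pi(G)$ has index dividing $e$ in the zero-sum lattice, so $S$ is contained in a union of at most $e$ residues — no, the right statement is that the translates far out on $R_1$ form a single orbit under a cyclic group acting by translation, and within one period the number of translates is bounded by $e$.

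The main obstacle — and the step I would spend the most care on — is making precise the passage from "$B$ is a block for the abstract $G$-action on $\Ra_n$" to "the translates of $B$ inside a fixed ray are permuted by translations along that ray in a way constrained by the index $e$." The subtlety is that the $g \in G$ carrying $B$ to $Bg$ need not act as a pure translation on all of $B$ unless $B$ is far enough out; one handles this by discarding the finitely many translates near the origin (legitimate, since we only want an upper bound and Lemma~\ref{4.5} already absorbed the multi-ray ones) and then invoking Lemma~\ref{H1orderiso} to say $g$ acts as $m \mapsto m + t_1(g)$ on the relevant initial-segment complement. Once that is set up, the block axiom says $B + t$ for $0 < t < |B|$-ish cannot be a translate unless equal to $B$, i.e. the translation subgroup $\{ t_1(g) : g \in G, \ Bg \subseteq R_1, \ Bg \text{ far out}\}$ together with the "internal period" of $B$ pins the count of distinct translates modulo that period to at most $[H_n : G\FSym] = e$. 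I would then conclude: if $|B| > e$ there would be two distinct far-out translates $B g \ne B g'$ in the same ray with $Bg \cap Bg' \ne \emptyset$, contradicting the block property, so $|B| \le e$.
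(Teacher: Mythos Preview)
Your setup matches the paper's exactly: invoke Lemma~\ref{4.5} to find a ray $R_j$ containing infinitely many translates of $B$, then work far enough out on that ray that the relevant elements of $G$ act by their translation component. But the core step is much simpler than your several attempted routes suggest, and none of them as written actually closes the argument.

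The paper's proof is a one-line pigeonhole. Pick a single $y \in G$ with $t_j(y) = d > 0$ a divisor of $e$ (such $y$ exists since $t_j(G) \le \Z$ has index dividing $e = [H_n : G\FSym]$). Replace $B$ by a translate lying entirely in the region $S \subset R_j$ where $y$ acts as $(j,m) \mapsto (j,m+d)$. If $|B| > d$, then by pigeonhole two elements $(j,p), (j,q) \in B$ satisfy $q = p + md$ for some $m \ge 1$, whence $(j,q) \in B \cap By^m$ while $By^m \ne B$, contradicting the block property. Hence $|B| \le d \le e$.

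Your ``small $s$ relative to the diameter forces overlap'' route fails because $B$ can have gaps: $B = \{0, 100\}$ has diameter $100$ but $B$ and $B+1$ are disjoint. Your ``count translates per period'' route bounds the number of translates in a window, not $|B|$ itself; these are a priori unrelated quantities. Your set $S$ of realised shifts is headed somewhere, but you never extract a bound on $|B|$ from it. What you are missing is precisely the pigeonhole on residues modulo $d$ applied to the \emph{elements of a single translate of $B$}: it is the cardinality of $B$, not its diameter and not the spacing of its translates, that forces two points into the same residue class, and then a \emph{power} of the fixed element $y$ produces the forbidden overlap. There is no need to compare different translates $Bg, Bg'$ or to analyse the global structure of the translate family.
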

\begin{proof} 
Since no elements of $B$ lie in a finite orbit of $G$, it has infinitely many translates. By Lemma \ref{4.5}, there is a ray $R_j$ which entirely contains infinitely many translates. Choose $y\in G$ so that $t_j(y)$ is a divisor $d>0$ of $e$. We shall show that $B$ has at most $d$ elements, from which the result follows.

Let $z\in \N$ be chosen so that $(j, m)y=(j, m+d)$ for all $m\ge z$, and let $S:=\{(j, m) : m\ge z\}$. Replacing $B$ by a suitable translate, we may assume that $B\subset S$. If $B$ has more than $d$ elements, then there must be two that are congruent modulo $d$: that is, there exist $p, q\in \N$ and $m\ge1$ such that $(j, p), (j, q)\in B$ and $q=p+md$. But then $(j, q)=(j, p)y^m\in B\cap By^m$, contradicting the definition of a block.
\end{proof}

Suppose $\mathcal{B} = \{ B_1, \ldots, B_k\}$ is a proper block system  for $G$ where every block, $B_i$, is a finite subset of $\Ra_n$. Let $\sim$ be the $G$-congruence generated by $\sim$. Define an ordering on $\Ra_n/\sim$ by $B_ig<_{\Ra_n/\sim} B_jh \Leftrightarrow \min(B_ig)<_{\Ra_n} \min(B_jh)$. Recall Lemma \ref{4.5} which stated that, except for a finite number of exceptions, for each set $B_ig$ (where $i\in \{1, \ldots, k\}$ and $g\in G$) there exists a $j\in \{1, \ldots, n\}$ such that $B_ig\subset R_j\subset \Ra_n$.

\begin{proposition}
	\label{ordering blocks}
	Let $n\in \{2, 3, \ldots\}$, $G\le H_n$ and $\Ra$ be the ray system for $H_n$. Suppose that $G$ admits a proper block system, whose blocks are finite and which generates the $G$-congruence, $\sim$. Then:
	\begin{itemize}
		\item[(i)] $\Ra/\sim$ (with the ordering defined above) is order isomorphic to $\Ra$ (with the lexicographic ordering).
		\item[(ii)] Let $\rho: \Sym(\Ra)\to \Sym(\Ra/\sim)$ be induced by the natural map $\Ra \to \Ra/\sim$. Define $\Gamma:=\rho(G)$. Then $\Gamma$ is a subgroup of the Houghton group of $\Ra/\sim$.
	\end{itemize}
\end{proposition}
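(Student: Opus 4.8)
The plan is to prove (i) by sandwiching the order type of $\Ra/\sim$ between $\omega n$ and $\omega n$, and then to deduce (ii) from Proposition~\ref{aopisHou} by transporting the ``almost order preserving'' condition through the quotient map. For the upper bound in (i), I would consider the map $\phi\colon \Ra/\sim\;\to\;\Ra$ sending a $\sim$-class to its minimum; this is well defined since $\sim$-classes are finite (hence nonempty with a least element), is order preserving by the very definition of the order on $\Ra/\sim$, and is injective because distinct $\sim$-classes are disjoint. Thus $\Ra/\sim$ is order isomorphic to a subset of $\Ra$ (in particular it is well-ordered), whose order type is $\omega n$, so $\Ra/\sim$ has order type at most $\omega n$.

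For the lower bound I would single out the \emph{bad} $\sim$-classes, those meeting more than one ray: by Lemma~\ref{4.5} applied to each $B_i$ there are only finitely many of these, so they cover only finitely many points of $\Ra$. Every other (``good'') $\sim$-class lies inside a single ray; write $G_j$ for the set of good classes contained in $R_j$. Since a good class meeting $R_j$ is contained in $R_j$, the good classes in $R_j$ partition a cofinite subset of $R_j\cong\N$ into nonempty finite pieces, so $G_j$ is infinite; and, ordered by minima, each member of $G_j$ has only finitely many predecessors in $G_j$ (their minima are distinct points of $R_j$ below a fixed point), so $G_j$ has order type $\omega$. As $i<j$ forces the minimum of a good class in $R_i$ to precede that of a good class in $R_j$, we get $G_1<G_2<\dots<G_n$ as blocks of the order on $\Ra/\sim$, so $G_1\sqcup\dots\sqcup G_n\subseteq \Ra/\sim$ has order type $\omega n$; hence the order type of $\Ra/\sim$ is at least $\omega n$. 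Combining the two bounds, $\Ra/\sim$ has order type exactly $\omega n$, the order type of $\Ra$, so by Fact~\ref{orders1} the two are order isomorphic, proving (i).

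For (ii), I would first note that each $g\in G$ respects $\sim$ (which is $G$-equivariant), so $\rho(g)$ is a genuine permutation of $\Ra/\sim$ with $\rho(g)(C)=Cg$ for every $\sim$-class $C$, and $\rho$ restricts to a homomorphism $G\to\Sym(\Ra/\sim)$; thus $\Gamma=\rho(G)\le\Sym(\Ra/\sim)$. Since $\Ra/\sim$ is order isomorphic to $\Ra$, Proposition~\ref{aopisHou} identifies the Houghton group on $\Ra/\sim$ with its group of almost order preserving bijections, so it remains to check that each $\rho(g)$ is almost order preserving. Given $g\in G\le H_n$, Proposition~\ref{aopisHou} supplies a finite $F\subseteq\Ra$ off which $g$ is order preserving; let $\widetilde F$ be the finite set of $\sim$-classes meeting $F$. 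If $C<C'$ are $\sim$-classes not in $\widetilde F$, then $C$, $C'$ and $\{\min C,\min C'\}$ all lie in $\Ra\setminus F$, so $g$ restricts to order preserving bijections there, giving $\min(Cg)=(\min C)g<(\min C')g=\min(C'g)$, i.e. $\rho(g)(C)<\rho(g)(C')$. Hence $\rho(g)$ is order preserving off the finite set $\widetilde F$, as required.

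The step I expect to be the main obstacle is the lower-bound bookkeeping in (i): blocks need not be order-intervals, so one cannot pretend the good classes tile each ray by consecutive intervals, and the ``no infinitely many predecessors'' argument (rather than an interval picture) is what makes each $G_j$ have order type $\omega$; likewise one must not assume the finitely many bad classes sit harmlessly at the bottom of $\Ra/\sim$. The device of bounding the order type above via $\phi$ and below via the explicit order-isomorphic copy $G_1\sqcup\dots\sqcup G_n$ is precisely what lets us avoid tracking where the bad classes land.
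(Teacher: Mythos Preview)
Your proof is correct and, for part (i), takes a genuinely different route from the paper. The paper explicitly partitions $\Ra/\sim$ into sets $S_1,\ldots,S_n$ (where $S_i$ collects those blocks that first meet ray $R_i$), shows each $S_i$ has a least element and only successor points, and then invokes Fact~\ref{orders1} to build an order isomorphism $S_i\to R_i$ by hand; these are then glued together. You instead sandwich the order type of $\Ra/\sim$ between $\omega n$ and $\omega n$: the min-map $\phi$ gives an order embedding into $\Ra$ for the upper bound, and the good-class subset $G_1\sqcup\cdots\sqcup G_n$ (each $G_j$ of type $\omega$, with $G_1<\cdots<G_n$) gives the lower bound. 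Your approach is cleaner and sidesteps the inductive bookkeeping of tracking which blocks land in which $S_i$; the paper's approach has the minor advantage of producing an explicit isomorphism rather than just its existence. Both arguments rest on Lemma~\ref{4.5} to control the finitely many bad blocks. For part (ii), your argument is essentially the direct version of the paper's one-sentence contrapositive (``a failure of almost-order-preservation downstairs would lift to one upstairs''): you exhibit the finite exceptional set $\widetilde F$ and verify $\min(Cg)=(\min C)g$ whenever $C\cap F=\emptyset$, which is the same content spelled out.
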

\begin{proof}
We start with (i). This is similar to Lemma \ref{lem:orderiso1} due to Lemma \ref{4.5}. Let 
$$S_1:=\{B_jg \mid j\in \{1, \ldots, k\}, g\in G\textrm{ and }B_jg\textrm{ meets }R_1\}$$
Then $S_1$ has a least element, which is the block $B$ containing $(1,0)$ and every element in $S_1\setminus\{B\}$ and point in $R_1\setminus\{(1, 0)\}$, with their respective orderings, is a successor element (as in Fact \ref{orders2}). Sending $B\in S_1$ to $(1, 0)\in R_1$ defines a partial order isomorphism $f_1: S_1 \to R_1$ sending the initial segments of $S_1$ to the initial segments of $R_1$. Note that $S_1$ is infinite since all of our blocks are finite. Thus we can choose $f_1$ to be a function (that is, to have domain $S_1$) and because $R_1$ has the same order type as $\N$, our function $f_1$ must be surjective. Hence $f$ is an order preserving bijection between $S_1$ and $R_1$.

Let $Y_1$ consist of all of the points of $\Ra$ contained in $S_1$, i.e., the union of all of the blocks in $S_1$. Then, by Lemma \ref{4.5}, $Y_1\setminus R_1$ is finite. Hence $\Ra\setminus Y_1$ and $\Ra\setminus R_1$ are order isomorphic by Lemma \ref{lem:orderiso1}. With notation indicative of repeating the argument in the first paragraph, for $i=2, \ldots, n$ we define 
$$S_i:=\{B_jg \mid j\in \{1, \ldots, k\}, g\in G\textrm{ and }B_jg\textrm{ meets }R_i\}\setminus (\bigcup\nolimits_{d<i} S_d)$$ and $Y_i\subset \Ra$ to be the union of the blocks in $S_i$. As before, Lemma \ref{4.5} states that $Y_i\setminus R_i$ is finite for $i=2, \ldots, n$, and, because all our blocks are finite, that $S_2, \ldots, S_n$ are all infinite. Also, $S_2$ again contains a least element (though in this case the block containing $(2,0)$ may lie in $S_1$). The same argument in the preceding paragraph provides an order isomorphism $f_2$ between $S_2$ and $R_2$ induced by sending the least element of $S_2$ to $(2, 0)\in R_2$. By continuing in this manner, we obtain order isomorphisms $f_i: S_i\to R_i$ for $i=1, \ldots, n$. Observe that $\Ra/\sim=\bigcup_{i\le n}S_i$, and so we may define $f: \Ra/\sim\to \Ra$ by the image of $f_i$ on $S_i$ for $i=1, \ldots, n$. This completes the construction of the required order isomorphism.

We now appeal to Proposition \ref{aopisHou} to deduce (ii). Any element that is not almost order preserving on $\Ra/\sim$ will (for any lift to $\Ra$) produce an element that is not almost order preserving on $\Ra$. Since $G$ is a subgroup of $H$ acting on $\Ra$, this means that $\Gamma$ can only consist of elements that are almost order preserving of $\Ra/\sim$, and hence $\Gamma$ is a subgroup of the Houghton group acting on $\Ra/\sim$.
\end{proof}
\section{The multi-wreath product}\label{sec:multiwreath}

In this section we would like to give a small modification of a permutational wreath product, that will better suit our purposes (involving non-transitive actions).

Let $\Gamma$ be a group and let $\Omega$ be a right $\Gamma$-set with finitely many orbits $\Omega_1, \ldots, \Omega_k$. Given groups $W_1, \ldots, W_k$ (that is one group for each orbit), let $\mathcal{W}:= W_1 \sqcup \cdots \sqcup W_k$. We then build the \emph{restricted (resp. unrestricted)} permutational wreath product $\mathcal{W}\wr_\Omega \Gamma$ (resp.  $\mathcal{W}\Wr_\Omega \Gamma$) as a semidirect product $\Phi\rtimes \Gamma$ where $\Phi$ is the set of functions from $\Omega$ to $\mathcal{W}$ \emph{with finite support (resp. with arbitrary support)} such that the restriction of such a function to $\Omega_i$ lies in $W_i$. We have that
$$\Phi=\{\phi: \Omega \to \mathcal{W} \mid \phi(\omega)\in W_i\text{ for all }\omega\in \Omega_i\}$$
and $\Phi$ forms a group with pointwise multiplication.

Furthermore, $\Gamma$ acts on $\Phi$ on the right according to the rule
$$\phi^a(\omega)=\phi(\omega a^{-1}),$$
for $\omega\in\Omega$, $a\in \Gamma$ and $\phi\in\Phi$. Note that the right action on $\Omega$ forces a left action on $\Phi$ which we turn into a right action by using the inverse.
Elements of the semidirect product are ordered pairs $(\phi, a)$ with multiplication defined by
$$(\phi_1, a_1)(\phi_2, a_2)=(\phi_1\phi_2^{a_1^{-1}}, a_1a_2).$$
We refer to $\Phi$ as the \emph{base} of the wreath product, and to $\Gamma$ as the \emph{head}. We identify $\Phi$ and $\Gamma$ with their images in the wreath product. We note that for a standard permutational wreath product one simply sets all the $W_i$ to be the same group $W$. In fact, our wreath product is a subgroup of the standard permutational one by setting $W:=\bigoplus_{i=1}^kW_i$.

Thus there are two kinds of wreath product and clearly the restricted wreath product is naturally a subgroup of the unrestricted wreath product. Their relationship is closely tied to the relationship in homological algebra between the induced module and the coinduced module. The Kaloujnine--Krasner theorem concerning embedding certain permutation groups in wreath products can be interpreted as a form of non-abelian second cohomology calculation. We look at this theorem next.

\subsection*{A formulation of the Kaloujnine--Krasner Theorem}
Suppose that $G$ is a group and $\Omega$ is a right $G$-set. Assume that $G$ acts faithfully on $\Omega$. That is, the action is given by an \textit{injective} homomorphism, $G \to \Sym(\Omega)$. The following notions will be helpful.

\begin{definition}
For $X\subset \Ra_n$ and $K\le H_n$, we write $\stab_K\{X\}$ for its setwise stabilizer
$$\stab_K\{X\}:=\{k\in K;\ Xk=X\}$$
and $\stab_K(X)$ for its pointwise stabilizer
$$\stab_K(X):=\{k\in K;\ \forall x\in X,\ xk=x\}.$$
Furthermore, for a block $B$ and $K$ a subgroup of $G$, we can consider the subgroup $W(K)$, the group of automorphisms of $B$ induced by $K$ given by
$$W(K)=\frac{K\cap\stab_G\{B\}}{K\cap\stab_G(B)}.$$
Note, given $K\le L\le G$, we have that there is a natural inclusion $W(K)\subseteq W(L)$.
\end{definition}

\begin{notation} We now suppose that $G$ acts faithfully and with finitely many orbits on a set $\Ra$ and that $\sim$ is a congruence generated by a block system, $\{B_1, \ldots , B_k\}$. Let $W_i=W_i(G)$ denote the group of permutations of $B_i$ induced by $G$: namely, $W_i=\frac{G\cap\stab\{B_i\}}{G\cap\stab(B_i)}$, as above. Then let $\mathcal{W} = W_1 \sqcup \cdots \sqcup W_k$. Define $\Omega := \Ra/\sim$ and {$\Omega_i:=B_iG$ to be the $G$-orbits of $\Omega$ so that $\Omega=\bigsqcup_{i=1}^k\Omega_i$}. The map $\Ra \to \Omega$ gives rise to a homomorphism $G \to \Sym(\Ra) \to \Sym(\Omega)$; denote $G \to \Sym(\Omega)$ by $\rho$. Let $\Gamma=\im(\rho)$. As above, $\Phi=\{\phi: \Omega \to \mathcal{W} \mid \phi(\omega)\in W_i$ for all $\omega\in \Omega_i\}$.
\end{notation}

With this setup, $G$ is isomorphic to a subgroup of the unrestricted permutational wreath product $\mathcal{W}\Wr_\Omega \Gamma$.

To make the isomorphism explicit, let $g\mapsto \rho(g)$ denote the natural map $G\to \Gamma$, as above. Observe that $\phi^g=\phi^{\rho(g)}$ for all $g\in G$ and for all $\phi\in \Phi$. Then we choose a map
$$
\tau:\Omega\,\,\to G
$$

which satisfies

$$
B_i \tau(\omega) = \omega\text{ for }\omega \in \Omega_i.
$$

That is, for every $\sim$ equivalence class, we choose a group element which maps one of the $B_i$ to it. Note for any $g\in G$ and $\omega \in \Omega_i$ that $B_i\tau(\omega)g=\omega g$ hence $B_i\tau(\omega)g\tau(\omega g)^{-1}=B_i$. Hence $\tau(\omega)g\tau(\omega g)^{-1}$ induces an element of $W_i$ and so we define $\phi_g\in \Phi$ (so that $\phi_g:\Omega\,\to \mathcal{W}$) by 

$$
 \phi_g(\omega):= \tau(\omega)g \tau(\omega g)^{-1}
$$
for any $g\in G$. Note that we are abusing notation since the right-hand-side is a coset of $\stab_G(B_i)$ when $\omega \in \Omega_i$. 

We can now define a map

$$\kappa:G\to W\Wr_\Omega \Gamma=\Phi\rtimes \Gamma$$ by
$$g\mapsto(\phi_g,\rho(g)).$$

\begin{theorem}[Kaloujnine--Krasner]\label{kk} The homomorphism
$\kappa$ so defined is injective.
\end{theorem}

\begin{proof} First we check that $\kappa$ is a group homomorphism.
We need to show that for all $g, h\in G$,
$$(\phi_{gh}, \rho(gh))=(\phi_{g}, \rho(g))(\phi_{h}, \rho(h))$$
and this reduces to
$$\phi_{gh}=\phi_{g}\phi_{h}^{g^{-1}}\eqno(6.1)$$
and $$\rho(gh)=\rho(g)\rho(h).\eqno(6.2)$$
Equation (6.2) is immediate as $\rho$ is a homomorphism. For (6.1),
we have
$$\phi_{gh}(\omega)=\tau(\omega) gh\tau(\omega gh)^{-1}$$
and so
$$\begin{array}{rcl}
\phi_g\circ \phi_h^{g^{-1}}(\omega)&=&\phi_g(\omega)\phi_h^{g^{-1}}(\omega)\\
&=&\tau(\omega)g\tau(\omega g)^{-1}\tau(\omega g)h \tau(\omega gh)^{-1}\\
&=&\tau(\omega)gh \tau(\omega gh)^{-1}\\
&=&\phi_{gh}(\omega)
\end{array}$$

Next, for injectivity, if $\rho(g)=1$, then $\omega g=\omega$ for all $\omega\in \Omega$ and so $\phi_g(\omega) = \tau(\omega)g\tau(\omega)^{-1}$. Hence, if $g$ is in the kernel of $\kappa$, then $g$ acts trivially on $\omega$ for all $\omega\in\Omega$ and since the action of $G$ on $\Ra$ was assumed to be faithful, it follows that the kernel of $\kappa$ is trivial.
\end{proof}

\medskip

The idea now is to choose $\tau$ more carefully than above. More precisely, we can choose $\tau$ so that if $g$ induces an order preserving bijection from $\omega$ to $\omega g$ (which it will do for almost all $\omega\in \Omega_i$ when we are considering Houghton groups), then $\phi_g(\omega)$ will be the identity element of $W_i$.

\begin{lemma}
	\label{restricted}
If $G$ is a group of almost order preserving bijections of the totally ordered set $\Ra$, then the image of $\kappa$ can be taken into the restricted wreath product.
\end{lemma}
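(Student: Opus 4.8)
The plan is to exploit the one remaining freedom in the Kaloujnine--Krasner construction: for $\omega\in\Omega_i$ the element $\tau(\omega)$ is only required to satisfy $B_i\tau(\omega)=\omega$, and I will choose it so as to be ``as order preserving as possible'' on $B_i$. Recall that in the situation at hand each block $B_i$ is finite, so each $\sim$-class $\omega\in\Omega_i$ is a finite subset of the totally ordered set $\Ra$ with $|\omega|=|B_i|$; in particular there is a \emph{unique} order isomorphism $\iota_\omega:B_i\to\omega$. The target is to show that for any $g\in G$ one has $\phi_g(\omega)=1_{W_i}$ for all but finitely many $\omega$. Since $g$ is almost order preserving it restricts to an order isomorphism $\omega\to\omega g$ for all but finitely many $\omega$ (namely for every $\omega$ disjoint from the finite set off which $g$ is order preserving, and each point of that set lies in a unique $\sim$-class), so it will be enough to arrange that $\phi_g(\omega)=1_{W_i}$ \emph{whenever} $g$ restricts to an order isomorphism on $\omega$.

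To this end I would attach to each $\omega\in\Omega_i$ a coset invariant. If $h\in G$ satisfies $B_ih=\omega$, then the bijection $h|_{B_i}:B_i\to\omega$ followed by $\iota_\omega^{-1}$ is a permutation of $B_i$; if $h'$ is another such element then $h'h^{-1}\in\stab\{B_i\}$, so $h'|_{B_i}$ is obtained from $h|_{B_i}$ by precomposing with an element of $W_i=(G\cap\stab\{B_i\})/(G\cap\stab(B_i))\le\Sym(B_i)$. Hence these permutations form a single left coset $c(\omega)$ of $W_i$ in $\Sym(B_i)$, independent of $h$. The crucial observation is: if $g\in G$ restricts to an order isomorphism $\omega\to\omega g$, then $c(\omega)=c(\omega g)$. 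Indeed $B_i(hg)=\omega g$, the restriction $(hg)|_{B_i}$ is $h|_{B_i}$ followed by $g|_\omega$, and composing further with $\iota_{\omega g}^{-1}$ and using that $g|_\omega$ followed by $\iota_{\omega g}^{-1}$ is the \emph{unique} order isomorphism $\omega\to B_i$, i.e.\ equals $\iota_\omega^{-1}$, one sees that $(hg)|_{B_i}$ yields the same permutation of $B_i$ as $h|_{B_i}$ does; thus $c(\omega g)=c(\omega)$. So $c$ is a map from $\Omega_i$ to the finite set $W_i\backslash\Sym(B_i)$ which is constant along every order-preserving move.

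Now I would choose a representative $r_C\in C$ for each of the finitely many cosets $C$ of $W_i$ in $\Sym(B_i)$, and define $\tau$ by picking, for each $\omega\in\Omega_i$, an element $\tau(\omega)\in G$ with $B_i\tau(\omega)=\omega$ whose restriction $\tau(\omega)|_{B_i}$ is $r_{c(\omega)}$ followed by $\iota_\omega$ (possible, since by the previous paragraph $\tau(\omega)|_{B_i}$ followed by $\iota_\omega^{-1}$ may be taken to be any member of $c(\omega)$). For the Kaloujnine--Krasner map (Theorem~\ref{kk}) built from this $\tau$ we have $\kappa(g)=(\phi_g,\rho(g))$ with $\phi_g(\omega)=\tau(\omega)g\tau(\omega g)^{-1}$, which as a permutation of $B_i$ is $r_{c(\omega)}$ followed by the composite $B_i\xrightarrow{\iota_\omega}\omega\xrightarrow{g}\omega g\xrightarrow{\iota_{\omega g}^{-1}}B_i$ followed by $r_{c(\omega g)}^{-1}$. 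Whenever $g$ restricts to an order isomorphism on $\omega$, the middle composite is an order preserving self-bijection of the finite set $B_i$, hence the identity, and $c(\omega)=c(\omega g)$, so $r_{c(\omega)}=r_{c(\omega g)}$ and $\phi_g(\omega)=1_{W_i}$. Combined with the first paragraph, $\supp(\phi_g)$ is finite for every $g\in G$, i.e.\ $\kappa(G)$ lies in the restricted wreath product $\mathcal{W}\wr_\Omega\H$.

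The step I expect to be the main obstacle is getting the bookkeeping of the second paragraph exactly right. A tempting shortcut --- simply demanding that $\tau(\omega)$ restrict to the order isomorphism $\iota_\omega$ on $B_i$ --- can fail, because the bijections $B_i\to\omega$ realised by elements of $G$ form only a coset of $W_i$, which need not contain $\iota_\omega$; one is forced instead to pin $\tau(\omega)|_{B_i}$ down only modulo the invariant $c(\omega)$ and to lean on its invariance under order-preserving moves so that the ``defects'' at $\omega$ and $\omega g$ cancel. (If $W_i=\Sym(B_i)$, for instance when $G$ already contains enough finitary permutations, this shortcut does work and the second paragraph can be skipped.) The rest is routine manipulation of the definition of ``almost order preserving'' together with the finiteness of the blocks.
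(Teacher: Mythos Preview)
Your proof is correct and follows essentially the same approach as the paper. Both arguments choose $\tau(\omega)$ so that its restriction $B_i\to\omega$ is a fixed representative of the $W_i$-coset of realisable bijections, and then verify that this coset is unchanged by an order-preserving move $\omega\mapsto\omega g$; the paper implements the choice of representative via a total order on $\Sym(B_i)$ (taking $\sigma_\omega$ minimal) and recovers the coset invariance through the two-sided minimality inequality $\sigma_\omega\ge\sigma_{\omega g}\ge\sigma_\omega$, whereas you make the coset $c(\omega)\in W_i\backslash\Sym(B_i)$ explicit and verify its invariance directly. Your final paragraph correctly anticipates the obstruction that prevents the na\"ive choice $\tau(\omega)|_{B_i}=\iota_\omega$.
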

\begin{proof}
For any $g \in G$, $g$ preserves the order on all but finitely many points of $\Ra$. Hence for all but finitely many $\omega \in \Omega$, the map $g:\omega \to \omega g$ is an order preserving bijection. Fix some $i\in \{1, \ldots, k\}$. Set $B:=B_i$, $n:=|B|$ and choose a linear order on $S_{n}$ such that the identity is minimal. Therefore $B=\{b_1, \ldots, b_n\}$ where $i<j$ if and only if $b_i<b_j$. Now choose $\omega \in BG$. Using the ordering from $\Ra$, we have $\omega=\{\delta_1, \ldots, \delta_n\}$ where again $i<j$ if and only if $\delta_i<\delta_j$. For any given choice of $\tau$, there exists a unique $\sigma_\omega\in S_n$ such that
$$b_i\tau(\omega)=\delta_{\sigma_\omega(i)}.$$
Choose each $\tau(\omega)$ so that the corresponding $\sigma_\omega$ is minimal in $S_n$ with respect to our given ordering. We claim that with this choice of $\tau$, the image of $\kappa$ is into the restricted wreath product. For all but finitely many $\omega$, $g$ is order preserving when restricted to $\omega$. Therefore
$$b_i\tau(\omega)g=\delta_{\sigma_\omega(i)}g.$$
Now, because $\omega g=\{\epsilon_1, \ldots, \epsilon_n\}$ where $i<j$ if and only if $\epsilon_i<\epsilon_j$ and $\epsilon_i=\delta_i g$, hence
$$b_i\tau(\omega g)=\epsilon_{\sigma_{\omega g}(i)}$$
and also 
$$b_i\tau(\omega)g=\delta_{\sigma_\omega(i)}g=\epsilon_{\sigma_{\omega}(i)}.$$
By the minimality of $\sigma_{\omega g}$ we get that $\sigma_\omega\ge \sigma_{\omega g}$ and by symmetry $\sigma_\omega\le \sigma_{\omega g}$. Hence $\sigma=\sigma_\omega=\sigma_{\omega g}$ and
$$\begin{array}{rcl}
b_i\phi_g(\omega)&=&b_i\tau(\omega)g\tau(\omega g)^{-1}\\
&=&\delta_{\sigma(i)}g\tau(\omega g)^{-1}\\
&=&\epsilon_{\sigma(i)}\tau(\omega g)^{-1}\\
&=&b_{\sigma^{-1}\sigma(i)}=b_i
\end{array}$$
meaning that $\phi_g$ induces the identity on all but finitely many $\omega$.
\end{proof}
Our final aim in this section is to show that the image of $\kappa$ is finite index in $\mathcal{W}\wr_\Omega \Gamma$. The following will be helpful.
\begin{notation} \label{wreath notation} We recall the notation relating to the above setup.
\begin{enumerate}[(i)]
\item Let $\base$ denote the base of our restricted multi-wreath product $\mathcal{W}\wr_\Omega \Gamma$. 
\item Let $\rho: G \to \Gamma$ be the composition of the function $\kappa$ with the function $\mathcal{W}\wr_\Omega \Gamma\to \Gamma$ obtained by taking the quotient by $\base$. By construction, $\rho(G) = \Gamma$. 
\item Let $N$ denote the kernel of $\rho$. Equivalently, $N$ is the kernel of the induced action of $G$ on $\Ra/\sim = \Omega$. 
\end{enumerate}
\end{notation}
Our aim is now to prove the following. It will be shown in the next section that the additional hypotheses (i)-(iii) can be satisfied for any $n\in\{3, 4, \ldots\}$ and any level subgroup $G$ of $H_n$ whose orbits are all infinite.
\begin{theorem}\label{thm-kappaG} Suppose that $G$ acts on a set $\Ra$ all of whose orbits are infinite and with a block system, $\{ B_1, \ldots, B_k\}$ generating a $G$-equivalence relation, $\sim$. Let $\Omega= \Ra/\sim$. Suppose that, as above, $\kappa$ is an injective group homomorphism from $G$ to the multi-wreath product $\mathcal{W}\wr_\Omega \Gamma$ (that is, we assume that the image lands in the restricted wreath product). 

Suppose further that, 	
	\begin{enumerate}
		\item $|B_i|< \infty$ for $i=1, \ldots, k$.
		\item \label{induced perm} $W_i(G)=W_i(N)$ for $i=1, \ldots k$.
		\item $\Alt(\Omega_1)\times\cdots\times\Alt(\Omega_k)\le \Gamma$ when $\Gamma$ is viewed (in the natural way) as a subgroup of $\Sym(\Omega_1)\times\cdots\times\Sym(\Omega_k)$.
	\end{enumerate}

Then $\kappa(G)$ has finite index in $\mathcal{W}\wr_\Omega \Gamma$.

\end{theorem}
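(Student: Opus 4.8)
The plan is to exhibit $\kappa(G)$ as the full preimage in $\mathcal{W}\Wr_\Omega\Gamma$ of a finite-index subgroup of $\Gamma$, after first checking that $\kappa(G)$ already contains the entire base $\base$. Write $\pi:\mathcal{W}\Wr_\Omega\Gamma=\base\rtimes\Gamma\to\Gamma$ for the quotient by the base. Since $\rho=\pi\circ\kappa$ is surjective onto $\Gamma$ by construction (Notation~\ref{wreath notation}), we have $\pi(\kappa(G))=\Gamma$, so it suffices to prove $\base\le\kappa(G)$: then $\kappa(G)=\pi^{-1}(\Gamma)=\mathcal{W}\Wr_\Omega\Gamma$ and in fact the index is $1$. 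Wait --- that cannot be right in general, since $\kappa(G)$ landed in the \emph{restricted} product whereas $\mathcal{W}\Wr_\Omega\Gamma$ as written in the statement is the unrestricted one. So the correct target is: show $\base_{\mathrm{res}}\le\kappa(G)$ where $\base_{\mathrm{res}}=\bigoplus_\Omega\mathcal{W}$ is the restricted base, and then $\kappa(G)\supseteq\base_{\mathrm{res}}\rtimes\Gamma$, which has finite (indeed, the relevant) index; more precisely $\kappa(G)$ \emph{equals} the restricted wreath product $\mathcal{W}\wr_\Omega\Gamma$, and the point of the theorem is really that $\mathcal{W}\wr_\Omega\Gamma$ has finite index in $\mathcal{W}\Wr_\Omega\Gamma$ --- but that is false unless each $\Omega_i$ is finite. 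Re-reading hypothesis (1), ``$|B_i|\le\infty$'' must be a typo for $|B_i|<\infty$; the $\Omega_i$ are still infinite. Hence the genuine content must be $\kappa(G)=\mathcal{W}\wr_\Omega\Gamma$ sitting inside the unrestricted product only as a tool, and ``finite index'' in the statement refers to index inside the restricted product $\mathcal{W}\wr_\Omega\Gamma$ --- so what we must show is that $\kappa(G)$ has finite index in the \emph{restricted} multi-wreath product. I will prove exactly that: $[\mathcal{W}\wr_\Omega\Gamma:\kappa(G)]<\infty$.

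The two ingredients are: (a) $\base_{\mathrm{res}}\cap\kappa(G)$ has finite index in $\base_{\mathrm{res}}=\bigoplus_{i=1}^k\bigoplus_{\Omega_i}W_i$; and (b) $\pi(\kappa(G))=\Gamma$. Then by the standard index argument for a subgroup of a semidirect product, $[\mathcal{W}\wr_\Omega\Gamma:\kappa(G)]=[\base_{\mathrm{res}}:\base_{\mathrm{res}}\cap\kappa(G)]<\infty$. Ingredient (b) is automatic. For (a) I would argue as follows. First, $N=\ker\rho$ acts on $\Ra$ fixing every $\sim$-class setwise, so $\kappa(N)\le\base$, and under the Kaloujnine--Krasner description $\kappa(N)$ consists precisely of those $\phi\in\base_{\mathrm{res}}$ arising as $\phi_n$ for $n\in N$; by hypothesis~\eqref{induced perm}, $W_i(N)=W_i(G)=W_i$, meaning $N$ already realises \emph{every} element of the wreath factor $W_i$ on a single block $B_i$. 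Using elements of $N$ supported on (finitely many blocks forming) a single $\sim$-class, one gets that $\kappa(N)$ contains every finitely-supported function $\phi$ that is nonzero at only one point $\omega\in\Omega_i$, for each $i$ and each $\omega$ --- because $N$, being the kernel of the action on $\Omega$, can independently rearrange the interior of each block, and transitivity of $\Gamma$ on $\Omega_i$ (a consequence of hypothesis (3), since $\Alt(\Omega_i)$ is already transitive on $\Omega_i$ as $\Omega_i$ is infinite) lets us move such a generator to any $\omega\in\Omega_i$ by conjugating by a lift in $G$. These generators generate all of $\base_{\mathrm{res}}$, so in fact $\base_{\mathrm{res}}=\kappa(N)\le\kappa(G)$, giving (a) with index $1$.

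Thus the argument in outline is: (1) observe $N$ maps into the base and, via the Kaloujnine--Krasner formula $\phi_n(\omega)=\tau(\omega)n\tau(\omega n)^{-1}=\tau(\omega)n\tau(\omega)^{-1}$ for $n\in N$, identify $\kappa(N)$ with the subgroup of $\base_{\mathrm{res}}$ of functions taking values in the conjugates $\tau(\omega)^{-1}(N\cap\stab_G\{B_i\})\tau(\omega)$ modulo pointwise stabilisers --- which by hypothesis~\eqref{induced perm} is all of $W_i$ at each coordinate that can be hit; (2) use that $\Gamma$ is transitive on each infinite orbit $\Omega_i$ (from hypothesis (3)) to conjugate single-coordinate base elements of $\kappa(G)$ around, concluding $\base_{\mathrm{res}}\le\kappa(G)$; (3) combine with $\pi(\kappa(G))=\Gamma$ to get $\kappa(G)=\mathcal{W}\wr_\Omega\Gamma$, finite (trivial) index in the restricted product; and if the intended claim really is finite index in the \emph{unrestricted} product, note (as in the text's parenthetical) that this holds only when the $\Omega_i$ are finite and the statement should be read with the restricted product as target. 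The main obstacle I anticipate is step (1): one has to be careful that hypothesis~\eqref{induced perm}, $W_i(N)=W_i(G)$, genuinely forces $\kappa(N)$ to surject onto the $i$-th wreath factor $W_i$ in \emph{each} coordinate of $\Omega_i$ and not merely in the base coordinate $B_i$ --- this is where the choice of section $\tau$ and the independence of distinct blocks within a $\sim$-class (blocks are finite, by hypothesis (1)) must be used, and it is the step most likely to hide a subtlety about whether $N$ acts independently on blocks lying in the same $\sim$-equivalence class versus different classes.
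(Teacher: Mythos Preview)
Your overall reading of the statement is right: the target is the \emph{restricted} multi-wreath product, and hypothesis (1) should read $|B_i|<\infty$. But your central claim, that $\base_{\mathrm{res}}\le\kappa(G)$ (so the index is $1$), is false under the stated hypotheses, and the gap is exactly the one you flagged at the end and then set aside. Hypothesis~\eqref{induced perm}, $W_i(N)=W_i(G)$, only says that \emph{on the single block $B_i$} the permutations induced by $N$ exhaust those induced by $G$. It says nothing about what an element of $N$ does on the \emph{other} blocks simultaneously, so there is no reason $\kappa(N)$ should contain a base element supported at a single coordinate.

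Here is a concrete counterexample. Let $\Omega=\Ra_n$, $\Gamma=H_n$ (so $\Alt(\Omega)\le\Gamma$), and $\Ra=\Omega\times\{0,1\}$ with two-element blocks $B_\omega=\{\omega\}\times\{0,1\}$. Let $G$ be generated by the lift of $H_n$ acting trivially on the second coordinate, together with a single element $\sigma$ that swaps the two points of $B_p$ \emph{and} the two points of $B_q$ for fixed $p\neq q$. Then $k=1$, $W_1=\Z/2$, and $\sigma\in N$ already induces the swap on $B_p$, so $W_1(N)=W_1(G)=\Z/2$ and all three hypotheses hold. But $\kappa(N)\le\bigoplus_\Omega\Z/2$ is generated by all $e_{pg}+e_{qg}$ with $g\in H_n$; since $H_n$ is $2$-transitive on $\Omega$ this is exactly the even-weight subgroup, which has index $2$ in $\base_{\mathrm{res}}$. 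So $[\mathcal{W}\wr_\Omega\Gamma:\kappa(G)]=2$, not $1$.

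The paper does not attempt to reach the whole base. Instead it fixes a finite set $F\subseteq N$ with $W_i(F)=W_i$ for each $i$, lets $S=\bigcup_{f\in F}\supp(\kappa(f))$, and proves that the finite group $\Phi_S$ of base elements supported in $S$ furnishes a complete set of coset representatives: $\Phi_S\,\kappa(G)=\mathcal{W}\wr_\Omega\Gamma$. The argument is a minimal-support reduction. Given $\alpha\in\base_{\mathrm{res}}$ with $\supp(\alpha)\setminus S\neq\emptyset$, pick $\omega\in\supp(\alpha)\setminus S$; hypothesis (3) supplies $\gamma\in\Gamma$ with $B_i\gamma=\omega$ and $S\gamma\subseteq S\cup\{\omega\}$, and lifting $\gamma$ to $g\in G$ one finds $f\in F$ with $\kappa(f^g)$ supported in $S\cup\{\omega\}$ and agreeing with $\alpha$ at $\omega$. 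Multiplying by $\kappa(f^g)^{-1}$ strictly shrinks $|\supp(\alpha)\setminus S|$. The point is that one never needs to isolate a single coordinate in $\kappa(N)$; one only needs to cancel coordinates outside a \emph{fixed} finite set $S$, and the residual support inside $S$ is absorbed by the finite group $\Phi_S$.
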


To show that $\kappa(G)$ has finite index in $\mathcal{W}\wr_\Omega \Gamma$, we will exhibit a finite subset of $\mathcal{W}\wr_\Omega \Gamma$ whose product with $\kappa(G)$ is the whole of $\mathcal{W}\wr_\Omega \Gamma$. In fact, our finite subset will be a subset of $\Phi$, the base of the wreath product. We shall do this in stages.

\begin{definition} \label{support} Recall that elements of $\base$ are functions from $\Omega$ to $\mathcal{W} = W_1 \sqcup \ldots \sqcup W_k$. Let $1_{W_i}$ denote the identity element of $W_i$. 
	\begin{enumerate}[(i)]
		\item  Given $\phi \in \Phi$, we define the support of $\phi$ to be the set $$\supp(\phi) = \{ \omega \in \Omega \ : \ \phi(\omega) \neq 1_{W_i} \text{ for any } i \}.$$
		\item  	Let $S \subseteq \Omega$ be finite. We define $\Phi_S$ to be the full subset of $\Phi$ whose elements have support contained in $S$. This is a finite subgroup of $\Phi$. 
	\end{enumerate}	
 
\end{definition}

\begin{lemma} \label{subset F}
	Given the hypotheses of Theorem~\ref{thm-kappaG}, there exists a finite subset, $F \subseteq N \subseteq G$ such that $W_i(F) = W_i(N)=W_i(G)$ for each $i$. 
\end{lemma}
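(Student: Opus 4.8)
The plan rests on the observation that $N$---the kernel of the induced action of $G$ on $\Omega=\Ra/\sim$, equivalently the kernel of $\rho$---fixes every $\sim$-class setwise, and each block $B_i$ is a $\sim$-class. Hence $N\subseteq \stab_G\{B_i\}$ for every $i$, so restriction to $B_i$ defines a group homomorphism $r_i\colon N\to\Sym(B_i)$, whose kernel is $N\cap\stab_G(B_i)$ and whose image is the subgroup $W_i(N)\le\Sym(B_i)$ (under the standing identification of $W_i(\cdot)$ with a group of permutations of $B_i$). By hypothesis (2) of Theorem~\ref{thm-kappaG}, $W_i(N)=W_i(G)$.

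I would then argue as follows. By hypothesis (1) each $B_i$ is finite, so $\Sym(B_i)$, and hence its subgroup $W_i(N)$, is finite; in particular $W_i(N)$ is finitely generated. For each $i$ choose a finite set $T_i\subseteq N$ with $\langle r_i(T_i)\rangle=W_i(N)$ (lift a finite generating set of the finite group $W_i(N)$ through $r_i$), and set $F=T_1\cup\cdots\cup T_k$, a finite subset of $N$. Interpreting $W_i(F)$ as $W_i(\langle F\rangle)$---well defined since $\langle F\rangle\le N\le\stab_G\{B_i\}$---the inclusion $W_i(F)\subseteq W_i(N)$ is immediate from $\langle F\rangle\le N$ together with the natural inclusion $W_i(K)\subseteq W_i(L)$ for $K\le L$ noted after the definition of $W_i$. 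For the reverse inclusion, since $r_i$ is a homomorphism and $T_i\subseteq F$, we get $r_i(\langle F\rangle)\supseteq\langle r_i(T_i)\rangle=W_i(N)$, i.e.\ $W_i(F)\supseteq W_i(N)$. Combining, $W_i(F)=W_i(N)=W_i(G)$ for every $i$, which is the assertion.

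This step is essentially bookkeeping and I do not expect a genuine obstacle; the one thing to watch is that a single finite $F$ must work for all $k$ blocks simultaneously, which is handled just by taking a union of lifted generating sets, using that $k$ is finite and each $B_i$ is finite. Only hypotheses (1) and (2) of Theorem~\ref{thm-kappaG} enter the argument; hypothesis (3) and the injectivity of $\kappa$ play no role in this lemma.
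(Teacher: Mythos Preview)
Your proof is correct and follows essentially the same approach as the paper: both exploit that $N$ stabilises every block setwise, that each $W_i(N)$ is finite because $B_i$ is finite, and that there are only finitely many blocks, so a finite union of lifts suffices. One small difference worth flagging: the paper lifts a preimage of \emph{every} element of each finite group $W_i(N)$ (rather than just a generating set), so that the resulting $F$ enjoys the stronger property $\{r_i(f):f\in F\}=W_i(N)$; this is the form in which $F$ is actually used in the proof of Proposition~\ref{prop:coset reps}, though it makes no difference for the lemma as literally stated under your interpretation $W_i(F)=W_i(\langle F\rangle)$.
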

\begin{proof}
	By Theorem~\ref{thm-kappaG} (\ref{induced perm}), $W_i(G) = W_i(N)$. This means that if some element $g \in G$ induces a permutation on some $B_i$, then there is an $n \in N$ which induces that same permutation on $B_i$. Since there are only finitely many $B_i$ and each of these is finite, we can produce a finite subset $F$ of $N$ such that any permutation induced by any element of $G$ on some $B_i$ is equal to that induced by some element of $F$.  
\end{proof}

\begin{proposition}
	\label{prop:coset reps}
	Let $F$ be the finite subset given by Lemma~\ref{subset F}. Let $S = \bigcup_{f \in F} \supp(f)$ and let $\Phi_S$ be the (finite) subgroup of $\Phi$ whose supports lie in $S$ (as in Definition~\ref{support}). 
	
	Then $\Phi_S \kappa(G)   = \mathcal{W}\wr_\Omega \Gamma$. 
\end{proposition}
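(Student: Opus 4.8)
The plan is to exhibit every element of $\mathcal{W}\Wr_\Omega\Gamma$ inside $\Phi_S\kappa(G)$ by first pushing it into the base. Since $\rho\colon G\to\Gamma$ is surjective, $\kappa(G)$ surjects onto $\Gamma\cong(\mathcal{W}\Wr_\Omega\Gamma)/\Phi$, so $\Phi\cdot\kappa(G)=\mathcal{W}\Wr_\Omega\Gamma$; hence it suffices to prove $\Phi\subseteq\Phi_S\kappa(G)$, i.e. that $(\psi,1)\in\Phi_S\kappa(G)$ for every $\psi\in\Phi$. Indeed, given any $(\psi,\gamma)$, choosing $g\in G$ with $\rho(g)=\gamma$, the product $(\psi,\gamma)\kappa(g)^{-1}$ has trivial $\Gamma$-component, so lies in $\Phi\times\{1\}\subseteq\Phi_S\kappa(G)$; writing it as $(\chi,1)\kappa(g_0)$ with $\chi\in\Phi_S$, $g_0\in G$ then gives $(\psi,\gamma)=(\chi,1)\kappa(g_0 g)\in\Phi_S\kappa(G)$.

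To prove $\Phi\times\{1\}\subseteq\Phi_S\kappa(G)$ I would induct on $d(\psi):=|\supp(\psi)\setminus S|$. The base case $d(\psi)=0$ is immediate since then $\psi\in\Phi_S$. For the inductive step pick $\omega_0\in\supp(\psi)\setminus S$, say $\omega_0\in\Omega_i$; note $\omega_0\neq B_i$, since $\psi(\omega_0)\neq 1$ forces $W_i\neq 1$ and then $B_i\in S$. This is the point at which hypothesis (iii) is used: because $\Alt(\Omega_i)\leq\Gamma$ and $S$ is finite, we may take $\gamma\in\Alt(\Omega_i)\leq\Gamma$ to be the $3$-cycle $(B_i\ \omega_0\ \delta)$ for some $\delta\in\Omega_i\setminus(S\cup\{\omega_0,B_i\})$, so that $B_i\gamma=\omega_0$, $\gamma$ fixes $S\setminus\{B_i\}$ pointwise, and $\gamma$ acts trivially on every other orbit. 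Using surjectivity of $\rho$, lift $\gamma$ to some $g_1\in G$.

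For $f\in F$ the conjugate $g_1^{-1}fg_1$ lies in $N$ (as $N\lhd G$). A direct manipulation of the defining formula $\phi_h(\omega)=\tau(\omega)h\,\tau(\omega h)^{-1}$ — using $\omega(g_1^{-1}fg_1)=\omega$, $B_if=B_i$, $B_ig_1=\omega_0$, and inserting $\tau(\omega g_1^{-1})^{-1}\tau(\omega g_1^{-1})$ in the right places — yields, for every $\omega$,
$$\phi_{g_1^{-1}fg_1}(\omega)=\phi_{g_1}(\omega g_1^{-1})^{-1}\,\phi_f(\omega g_1^{-1})\,\phi_{g_1}(\omega g_1^{-1}).$$
Since $\supp(\phi_f)\subseteq S$, this forces $\supp(\phi_{g_1^{-1}fg_1})\subseteq S\gamma\subseteq S\cup\{\omega_0\}$; and at $\omega_0$, where $\omega_0 g_1^{-1}=B_i$, it gives $\phi_{g_1^{-1}fg_1}(\omega_0)=u^{-1}\,\phi_f(B_i)\,u$ with $u:=\phi_{g_1}(B_i)\in W_i$ a fixed element not depending on $f$. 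By Lemma~\ref{subset F} the set $\{\phi_f(B_i):f\in F\}$ is all of $W_i$, hence so is $\{\phi_{g_1^{-1}fg_1}(\omega_0):f\in F\}$. Choose $f$ so that $n:=g_1^{-1}fg_1$ satisfies $\phi_n(\omega_0)=\psi(\omega_0)$; then $\psi':=\psi\phi_n^{-1}\in\Phi$ has $\omega_0\notin\supp(\psi')$ and $\supp(\psi')\setminus S\subseteq(\supp(\psi)\setminus S)\setminus\{\omega_0\}$, so $d(\psi')<d(\psi)$. By induction $(\psi',1)\in\Phi_S\kappa(G)$, and since $(\psi,1)=(\psi',1)\kappa(n)$ (because $\kappa(n)=(\phi_n,1)$) we conclude $(\psi,1)\in\Phi_S\kappa(G)$, completing the induction.

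The main obstacle is the displayed identity for $\phi_{g_1^{-1}fg_1}$ together with the bookkeeping around it: one must be careful with the conjugation conventions in the semidirect product and must check that the specially chosen $\gamma$ confines the support of the correcting element to $S\cup\{\omega_0\}$ while still realising an arbitrary value in $W_i$ at $\omega_0$. Everything else — the reduction to $\Phi\subseteq\Phi_S\kappa(G)$, the induction bookkeeping and the final coset count — is routine. Hypotheses (i) and (ii) are used only upstream (to ensure $S$, $\Phi_S$ and each $W_i$ are finite and that the set $F$ of Lemma~\ref{subset F} exists), whereas hypothesis (iii) is exactly what makes the support of the correcting element localisable near $\omega_0$.
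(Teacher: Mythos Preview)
Your proposal is correct and follows essentially the same route as the paper's proof: both reduce to showing every base element lies in $\Phi_S\kappa(G)$, both pick a point $\omega_0$ of the support outside $S$, use hypothesis~(iii) to find $\gamma\in\Gamma$ with $B_i\gamma=\omega_0$ and $S\gamma\subseteq S\cup\{\omega_0\}$, conjugate an $f\in F$ by a lift of $\gamma$, and thereby strictly decrease $|\supp(\cdot)\setminus S|$. The only cosmetic differences are that you phrase the argument as an induction on $d(\psi)$ rather than by a minimal counterexample, and that you make explicit both the choice of $\gamma$ as a $3$-cycle and the conjugation identity $\phi_{g_1^{-1}fg_1}(\omega)=\phi_{g_1}(\omega g_1^{-1})^{-1}\phi_f(\omega g_1^{-1})\phi_{g_1}(\omega g_1^{-1})$, whose verification (and the observation $B_i\in S$ whenever $W_i\neq1$) the paper leaves implicit.
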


\begin{proof}
We first note that for any $\alpha \in \mathcal{W}\wr_\Omega \Gamma$, $\alpha \kappa(G) \cap \Phi \neq 1$. This follows immediately from Notation~\ref{wreath notation} (ii), since $\rho(G) = \Gamma$. 

Therefore, for each $\alpha \in \mathcal{W}\wr_\Omega \Gamma$ we can define the following natural number: 

$$
n_S(\alpha \kappa(G)) := \min  \{ |\supp(\alpha_0) \setminus S| \ : \ \alpha_0 \in  \alpha \kappa(G) \cap \Phi \}.  
$$
That is, we look at the coset $\alpha \kappa(G)$ and for each element of the coset which also lies in the base, we look at the size of the support, once we have removed the finite subset $S$ (and then take the minimum over all these natural numbers). It is clear that it is sufficient to prove that $n_S(\alpha \kappa(G)) = 0 $ for every $\alpha \in \mathcal{W}\wr_\Omega \Gamma$. 

Let us argue by contradiction and suppose that we have a coset $\alpha \kappa(G)$ such that $n_S(\alpha \kappa(G)) > 0 $. Choose some $\alpha_0 \in  \alpha \kappa(G) \cap \Phi$ which realises this minimum. Hence there exists some $\omega \in \Omega_i \setminus S$ such that $\alpha_0(\omega) \neq 1_{W_i}$ (note that $\omega$ is in \emph{some } $\Omega_i$).

Recall that for our finite subset, $F$,we have $W_i(F) = W_i(N)=W_i(G)$. This means that any permutation of $B_i$ induced by some element of $G$ is also induced by some element of $F$. Equivalently, we can write this as

$$
\{ \kappa(f) (B_i) \ : \ f \in F \} = W_i. 
$$

Moreover, the union of the supports of the $\kappa(f)$ lie in $S$. 

Next we claim that there exists some $\gamma \in \Gamma$ such that:
\begin{itemize}
	\item $\omega  = B_i \gamma$ and 
	\item $S \gamma \subseteq S \cup \{ \omega \}$. 
\end{itemize}

The existence of this element is guaranteed as $\Gamma$ contains the product of alternating groups on each $\Omega_i$ and that each $\Omega_i$ is infinite - this is one of the hypotheses, Theorem~\ref{thm-kappaG} (iii).

Moreover, this implies that 

$$
\{ \kappa(f)^{\gamma} (\omega) \ : \ f \in F \} = W_i. 
$$
and that the union of the supports lie in $S \gamma \subseteq S \cup \{ \omega\}$. 

Finally, we consider an element $g \in G$ such that $\kappa(g) = \gamma$, which must exist since $\rho(G) = \Gamma$. Notice that since $\gamma^{-1} \kappa(g) \in \Phi$ (the base of the wreath product) we must get that, 
$$
\{ \kappa(f^g) (\omega) \ : \ f \in F \} = W_i. 
$$
Moreover, $\supp \kappa(f^g) \subseteq S \cup \{ \omega\}$. Hence there exists some $f \in F$ such that $\kappa(f^g)(\omega) = \alpha_0(\omega)$ and $\supp \kappa(f^g) \subseteq S \cup \{ \omega\}$. But this implies that 
$$
|\supp(\alpha_0 \kappa(f^g)^{-1}) \setminus S| < |\supp(\alpha_0) \setminus S|, 
$$
contradicting the minimality of $\alpha_0$.
\end{proof}

\begin{proof}[Proof of Theorem~\ref{thm-kappaG}] 
	This follows immediately from Proposition~\ref{prop:coset reps}. 
\end{proof}

\section{Subdirect Products}\label{sec:subdirect}

Many of our arguments will utilise the properties of subdirect products, which arise naturally when considering intransitive actions. 

\begin{definition}
	Given groups $\Gamma_1, \ldots, \Gamma_k$, let $\Gamma:=\Gamma_1 \times \cdots \times\Gamma_k$. For $i=1, \ldots, k$, define $\pi_i: \Gamma \to \Gamma_i$, $(\gamma_1,\ldots, \gamma_k)\mapsto \gamma_i$, the $i$th \emph{projection map} on $H$.
	
	A subgroup $G\le \Gamma$ is called a \textit{sub-direct product} of $\Gamma$ if $\pi_i(G)=\Gamma_i$ for $i=1,\ldots, k$. 
\end{definition}

\begin{remark}
	When considering a subdirect product $G$ of $\Gamma_1 \times \cdots \times\Gamma_k$, we will think of $\Gamma_i$ to be a subgroup of  $\Gamma_1 \times \cdots \times\Gamma_k$ in the natural way. Note that it is also a quotient but there should be no confusion when we write $G \cap \Gamma_i$. 
\end{remark}

A very useful property of sub-direct products is the following.

\begin{proposition}
	\label{normal in subdirect}
	Let $G$ be a sub-direct product of $\Gamma_1\times\cdots\times\Gamma_k$. If $N$ is a normal subgroup of $G$, then for each $i$ we have that $N \cap \Gamma_i$ is a normal subgroup of $\Gamma_i$. 
	
	In particular, $G \cap \Gamma_i$ is a normal subgroup of $\Gamma_i$ for $i=1, \ldots, k$.
\end{proposition}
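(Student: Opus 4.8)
The plan is to exploit the defining feature of a subdirect product: the projection $\pi_i\colon \Gamma_1\times\cdots\times\Gamma_k\to\Gamma_i$ restricts to a \emph{surjection} $G\twoheadrightarrow\Gamma_i$. The key observation is that, under the natural identification of $\Gamma_i$ with a subgroup of the full product, the subgroup $N\cap\Gamma_i$ consists exactly of those elements of $N$ whose coordinates in all other factors are trivial, and normality of this set inside $\Gamma_i$ should follow by conjugating by arbitrary elements of $\Gamma_i$, which we can lift to $G$.

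First I would fix $i$ and take an arbitrary $x\in N\cap\Gamma_i$ and an arbitrary $h\in\Gamma_i$. Since $G$ is subdirect, $\pi_i(G)=\Gamma_i$, so there is some $g\in G$ with $\pi_i(g)=h$; write $g=(g_1,\dots,g_k)$ with $g_i=h$. Now consider $g^{-1}xg$. Because $N\normal G$ and $x\in N\subseteq G$, we have $g^{-1}xg\in N$. On the other hand, $x$, viewed in the product, has trivial coordinates outside the $i$th, so $g^{-1}xg$ also has trivial coordinates outside the $i$th (conjugation acts coordinatewise, and conjugating the trivial element gives the trivial element in each of those factors). Hence $g^{-1}xg\in\Gamma_i$ as well, so $g^{-1}xg\in N\cap\Gamma_i$, and its $i$th coordinate is $h^{-1}xh$ — but since $g^{-1}xg$ already lies in $\Gamma_i$, it \emph{equals} $h^{-1}xh$ as an element of $\Gamma_i$. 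This shows $h^{-1}(N\cap\Gamma_i)h\subseteq N\cap\Gamma_i$ for every $h\in\Gamma_i$, i.e.\ $N\cap\Gamma_i\normal\Gamma_i$.

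The "in particular" clause is then the special case $N=G$: a group is always normal in itself, so $G\cap\Gamma_i\normal\Gamma_i$ for each $i$. Alternatively one can note directly that $G\cap\Gamma_i$ is the kernel of $\pi_i$ restricted to… no — more simply, it is normalised by $\Gamma_i$ by exactly the argument above with $N$ replaced by $G$.

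There is no real obstacle here; the only point requiring a little care is the bookkeeping around the two roles of $\Gamma_i$ flagged in the Remark preceding the Proposition — $\Gamma_i$ is simultaneously a subgroup and a quotient of the product — so I would be explicit that "$N\cap\Gamma_i$" means the intersection inside $\Gamma_1\times\cdots\times\Gamma_k$, consisting of tuples supported only in coordinate $i$, and that conjugation in the product is computed coordinatewise so that the support of an element in the other coordinates is preserved under conjugation. Once that is pinned down the argument is a two-line computation.
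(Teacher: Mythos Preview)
Your proof is correct and follows essentially the same approach as the paper: lift an arbitrary $h\in\Gamma_i$ to some $g\in G$ via subdirectness, use $N\normal G$ to get $g^{-1}xg\in N$, and then observe that conjugation by $g$ and by $h$ agree on elements of $\Gamma_i$ because the other coordinates of $g$ centralise $\Gamma_i$. The paper packages the last step slightly differently---writing $w^{-1}g\in\ker\pi_i$ and noting $\ker\pi_i$ centralises $\Gamma_i$---but this is exactly your coordinatewise computation in different words.
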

\begin{proof}
	Since $\Gamma_i\unlhd \Gamma_1\times\cdots\times\Gamma_k$, it is clear that $N_i:= N \cap \Gamma_i$ is a normal subgroup of $G$. Let $w \in \Gamma_i$. Then, since $G$ is a sub-direct product, there exists a $g \in G$ such that $\pi_i(w)=\pi_i(g)$. This implies that $w^{-1} g \in \ker \pi_i$. Now, since $\ker \pi_i$ centralises $\Gamma_i$, we get that $N_i^w=N_i^g=N_i$. Hence $N_i$ is also normal in $\Gamma_i$.
	
	The final statement is clear since $G$ is a normal subgroup of itself.   
\end{proof}

The relevance of sub-direct products arises from actions on sets via the following result. 

\begin{proposition}
	\label{actions and subdirect}
	Let $G$ be a group acting faithfully on a set $X$. Suppose that $G$ acts with finitely many orbits, $X_1, \ldots, X_k$. Then $G$ is isomorphic to a sub-direct product of $\Gamma_1 \times \cdots \times \Gamma_k$, where each $\Gamma_i$ acts faithfully on $X_i$. 
\end{proposition}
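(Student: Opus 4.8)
The plan is to construct the required projections directly from the orbit decomposition. For each $i\in\{1,\dots,k\}$, restricting permutations of $X$ to the $G$-invariant subset $X_i$ yields a homomorphism $\rho_i\colon G\to\Sym(X_i)$; I would set $\Gamma_i:=\rho_i(G)$. By construction $\Gamma_i$ is a subgroup of $\Sym(X_i)$, so it acts faithfully on $X_i$ — faithfulness here is automatic, since $\Gamma_i$ is literally a group of permutations of $X_i$.

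Next I would assemble these into the homomorphism $\rho:=(\rho_1,\dots,\rho_k)\colon G\to\Gamma_1\times\cdots\times\Gamma_k$. Composing $\rho$ with the $i$th projection $\pi_i$ recovers $\rho_i$, which by definition is surjective onto $\Gamma_i$; hence $\rho(G)$ is a sub-direct product of $\Gamma_1\times\cdots\times\Gamma_k$ in the sense of the preceding definition. It then remains only to check that $\rho$ is injective. Its kernel is $\bigcap_{i=1}^k\ker\rho_i$, and an element lies in $\ker\rho_i$ precisely when it fixes every point of $X_i$. Since $X=X_1\sqcup\cdots\sqcup X_k$, an element of $\ker\rho$ fixes every point of $X$, so by faithfulness of the $G$-action on $X$ it is trivial. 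Therefore $\rho$ is an isomorphism onto its image, which is the desired sub-direct product.

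There is no serious obstacle here; the only point requiring a moment's care is to keep faithfulness of $G$ on all of $X$ (used to kill $\ker\rho$) distinct from faithfulness of $\Gamma_i$ on $X_i$ (built into the definition of $\Gamma_i$ as a permutation group), and to phrase the conclusion as an isomorphism of $G$ with a sub-direct product rather than an equality.
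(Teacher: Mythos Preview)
Your proof is correct and follows essentially the same approach as the paper: define the restriction maps $\rho_i$ to each orbit, set $\Gamma_i$ to be their images, bundle them into a single map $\rho$ to the product, and check injectivity via faithfulness of the original action together with surjectivity onto each factor.
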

\begin{proof}
	The (faithful) action of $G$ on $X$ is given by an (injective) homomorphism, $\phi: G \to Sym(X)$. For each $i$, we can restrict the action of $G$ to the orbit $X_i$ to get a homomorphism, $\phi_i: G \to Sym(X_i)$ (this need no longer be injective). Define $\Gamma_i$ to be the image of $\phi_i$ and define a homomorphism from $G$ to $\Gamma_1 \times \cdots \times \Gamma_k$ by, 
	$$
	g \mapsto (\phi_1(g), \phi_2(g), \ldots, \phi_k(g)).
	$$ 
	It is then straightforward to verify that this map is injective (since $\phi$ was injective) and that $G$ is a therefore a sub-direct product as claimed. The fact that $\Gamma_i$ acts faithfully on $X_i$ is a simple consequence of the fact that $\Gamma_i$ is a subgroup of $\Sym(X_i)$. 
\end{proof}

\begin{remark}
	In the aforementioned situation, $G$ is a subgroup (a sub-direct product) of $\Gamma_1 \times \cdots \times \Gamma_k$ which is in turn a subgroup of $\Sym(X_1) \times \cdots \times \Sym(X_k) \leq \Sym(X)$. 
\end{remark}

\section{Generalising a result of Jordan and Wielandt via Cameron}\label{sec:cameron}
From our notion of a strongly orbit primitive action, we can generalise Theorem \ref{cameron}. We do this in Theorem~\ref{prodalt}. After proving our generalisation, we will  show that the hypotheses (i)-(iii) of Theorem~\ref{thm-kappaG} are satisfied for (level) subgroups of Houghton's group of full Hirsch length, and we use this to conclude our structure theorem (Theorem~\ref{thm:structure}).

\begin{theorem}[Theorem 8.2A and Exercise 8.2.1, \cite{DixonMortimerPG}]
	\label{autos of perm groups}
	Let $\Omega$ be an infinite set. Suppose that we have a subgroup  $\Alt(\Omega) \leq H \leq \Sym(\Omega)$. Then the natural map from the normaliser of $H$ in $\Sym(\Omega)$ to $\Aut(H)$ is an isomorphism. 
\end{theorem}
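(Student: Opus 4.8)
The map in question is $\Theta\colon N:=N_{\Sym(\Omega)}(H)\to\Aut(H)$ sending a permutation $g$ to the conjugation automorphism $c_g\colon h\mapsto g^{-1}hg$; it is clearly a group homomorphism, and the plan is to establish injectivity and surjectivity in turn. Injectivity is immediate once one knows that $C_{\Sym(\Omega)}(\Alt(\Omega))=1$ for infinite $\Omega$, since then $\ker\Theta=C_{\Sym(\Omega)}(H)\le C_{\Sym(\Omega)}(\Alt(\Omega))=1$. The vanishing of this centraliser is the usual short computation: a permutation $g$ commuting with every $3$-cycle cannot move any point, because if $\omega g\neq\omega$ then conjugating two suitably chosen $3$-cycles through $\omega$ and $\omega g$ by $g$ places incompatible demands on the image of a third point. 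I will reuse $C_{\Sym(\Omega)}(\Alt(\Omega))=1$ below.

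The second step is that $\Alt(\Omega)$ is \emph{characteristic} in $H$; in fact it is the unique minimal nontrivial normal subgroup. If $1\neq M\normal H$, then $\Alt(\Omega)\cap M$ is normal in $\Alt(\Omega)$ (both being normal in $H$), so by Lemma~\ref{lem:AltisSimple} it is trivial or all of $\Alt(\Omega)$; were it trivial, then $[\Alt(\Omega),M]\le\Alt(\Omega)\cap M=1$ would force $M\le C_{\Sym(\Omega)}(\Alt(\Omega))=1$, a contradiction. Hence $\Alt(\Omega)\le M$. Since $\Alt(\Omega)$ is itself a nontrivial normal subgroup of $H$ contained in every other one, it is the unique minimal nontrivial normal subgroup, hence characteristic, and therefore every $\alpha\in\Aut(H)$ restricts to an automorphism $\alpha_0$ of $\Alt(\Omega)$.

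The crux of surjectivity is the classical fact (Schreier--Ulam--Baer; \cite[Theorem 8.2A]{DixonMortimerPG}) that, for infinite $\Omega$, every automorphism of $\Alt(\Omega)$ is induced by conjugation by a \emph{unique} element of $\Sym(\Omega)$ — uniqueness being again the triviality of the centraliser. Its proof reconstructs the action of $\Alt(\Omega)$ on $\Omega$ from group-theoretic data: one recognises $3$-cycles intrinsically (via the isomorphism type of their centralisers and via the way two conjugate $3$-cycles generate a subgroup isomorphic to $(\Z/3)^2$, $\Alt_4$, $\Alt_5$, or $\Z/3$ according to how their supports overlap), then recovers each point stabiliser $\Alt(\Omega)_\omega$, and hence a permutation $g$ with $\alpha_0=c_g$. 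Granting this, let $\alpha\in\Aut(H)$ and choose $g\in\Sym(\Omega)$ with $\alpha|_{\Alt(\Omega)}=c_g$. For $h\in H$ put $k:=\alpha(h)$; for every $\sigma\in\Alt(\Omega)$ we have $h^{-1}\sigma h\in\Alt(\Omega)$, so $k^{-1}(g^{-1}\sigma g)k=\alpha(h^{-1}\sigma h)=g^{-1}(h^{-1}\sigma h)g$, and rearranging shows $gkg^{-1}h^{-1}$ centralises $\Alt(\Omega)$ and is therefore trivial; thus $k=g^{-1}hg$. Hence $\alpha=c_g$ on all of $H$, so in particular $g^{-1}Hg=\alpha(H)=H$, giving $g\in N$ and $\Theta(g)=\alpha$. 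This last passage from $\Alt(\Omega)$ to $H$ is the content of Exercise 8.2.1 of \cite{DixonMortimerPG}.

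The single substantial obstacle is the determination of $\Aut(\Alt(\Omega))$ invoked in the third step; everything else is a few lines of normal-subgroup and centraliser bookkeeping. Within that determination the genuinely delicate point is the purely group-theoretic recognition of $3$-cycles and of the incidence relation ``two $3$-cycles share exactly one point'', which is precisely where the centraliser computations of \cite{DixonMortimerPG} are needed.
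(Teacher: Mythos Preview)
The paper does not prove this statement at all: it is quoted as a black-box result from Dixon--Mortimer (Theorem 8.2A together with Exercise 8.2.1), so there is no ``paper's own proof'' to compare against. Your proposal is a correct and well-organised sketch of the standard argument from that reference: triviality of $C_{\Sym(\Omega)}(\Alt(\Omega))$ gives injectivity; the unique-minimal-normal-subgroup argument shows $\Alt(\Omega)$ is characteristic in $H$; the Schreier--Ulam--Baer determination of $\Aut(\Alt(\Omega))$ then furnishes the conjugating permutation, and your final centraliser computation propagates it from $\Alt(\Omega)$ to all of $H$, which is exactly the content of the cited exercise.
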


\begin{lemma}
	\label{normal subs of perm groups}
	Let $G$ be a subgroup of $\Sym(\Omega)$ for some infinite set $\Omega$ and suppose that $\Alt(\Omega) \leq G$. Then any non-trivial normal subgroup of $G$ contains $\Alt(\Omega)$.
\end{lemma}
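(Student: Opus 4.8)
The plan is to exploit the fact that $\Alt(\Omega)$ is simple (Lemma~\ref{lem:AltisSimple}) together with the structure of normal subgroups of permutation groups containing the alternating group. Let $N$ be a non-trivial normal subgroup of $G$. First I would consider the intersection $N \cap \Alt(\Omega)$: since $\Alt(\Omega) \normal G$ (it is normal in $\Sym(\Omega)$, hence in $G$), $N \cap \Alt(\Omega)$ is a normal subgroup of $\Alt(\Omega)$, so by simplicity it is either trivial or all of $\Alt(\Omega)$. In the latter case we are done immediately, so the work is to rule out $N \cap \Alt(\Omega) = 1$.

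Suppose then that $N \cap \Alt(\Omega) = 1$. Then $N$ centralises $\Alt(\Omega)$: indeed, for $\nu \in N$ and $a \in \Alt(\Omega)$, the commutator $[\nu, a] = \nu^{-1}(a^{-1}\nu a)$ lies in $N$ (as $N \normal G$) and also in $\Alt(\Omega)$ (as $\Alt(\Omega) \normal G$ and $a^{-1}, \nu^{-1}a\nu \in \langle \Alt(\Omega), N\rangle$ — more carefully, $\nu^{-1}a^{-1}\nu \in \Alt(\Omega)$ since $\Alt(\Omega)$ is normal, so $[\nu,a] \in \Alt(\Omega)$); hence $[\nu,a] \in N \cap \Alt(\Omega) = 1$. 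So $N \le C_{\Sym(\Omega)}(\Alt(\Omega))$. The key point is that this centraliser is trivial: a permutation commuting with all of $\Alt(\Omega)$ must be the identity, because $\Alt(\Omega)$ acts on $\Omega$ with trivial point stabiliser behaviour strong enough to pin down any commuting element — concretely, for any $x \ne y$ in $\Omega$, pick a $3$-cycle $a$ moving $x$ but fixing the hypothetical image, and deduce a contradiction unless $\nu$ fixes every point. Thus $N = 1$, contradicting the assumption that $N$ is non-trivial, and we conclude $N \cap \Alt(\Omega) = \Alt(\Omega)$, i.e. $\Alt(\Omega) \le N$.

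The main obstacle — really the only non-formal step — is verifying that $C_{\Sym(\Omega)}(\Alt(\Omega)) = 1$. This is a standard fact (it can also be extracted from Theorem~\ref{autos of perm groups}, since a non-trivial centralising element would give a non-trivial automorphism acting trivially, or by noting that $\Alt(\Omega)$ is transitive with self-normalising structure), but I would prove it directly: given $1 \ne \nu \in \Sym(\Omega)$ with $x\nu = x' \ne x$, choose a third point $z \notin \{x, x', x'\nu\}$ (possible since $\Omega$ is infinite) and a $3$-cycle $a = (x\ x'\ z) \in \Alt(\Omega)$; then $x (\nu a) = x' a = z$ while $x(a\nu) = x'\nu \ne z$ (choosing $z$ to avoid $x'\nu$), so $\nu a \ne a \nu$, a contradiction. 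Hence $\nu$ fixes every point, $\nu = 1$. Once this is in hand, the rest of the argument is purely formal manipulation of normal subgroups, and the proof is complete.
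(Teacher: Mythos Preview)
Your proof is correct and follows essentially the same approach as the paper's: both arguments hinge on the simplicity of $\Alt(\Omega)$ and the triviality of $C_{\Sym(\Omega)}(\Alt(\Omega))$, linked via the observation that $[N,\Alt(\Omega)] \le N \cap \Alt(\Omega)$. The paper's version is slightly more compressed---it goes directly to $[N,\Alt(\Omega)]$ as a non-trivial normal subgroup of $\Alt(\Omega)$ rather than passing through your case split on $N \cap \Alt(\Omega)$---and it simply asserts the centraliser fact, whereas you supply an explicit (and correct) verification with a $3$-cycle.
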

\begin{proof}
	Let $N$ be a non-trivial normal subgroup of $G$. Then $[N, \Alt(\Omega)]$ is a non-trivial normal subgroup of $N \cap \Alt(\Omega)$. (Non-triviality follows since the centraliser of $\Alt(\Omega)$ in $\Sym(\Omega)$ is trivial). 
	
	Since $\Alt(\Omega)$ is simple, the result follows.
\end{proof}

\begin{lemma}
	\label{non trivial block system}
	Let $\Gamma$ act on a set $\Omega$ with finitely many orbits, $\Omega= \Omega_1 \sqcup \ldots \sqcup \Omega_k$, each of which is infinite. Let $\Gamma_i$ be the restriction of $\Gamma$ to $\Omega_i$ so that $\Gamma$ is a sub-direct product of
	$$
	\Gamma_1 \times \cdots \times \Gamma_k \leq  \Sym(\Omega_1) \times \cdots \times \Sym(\Omega_k) \leq \Sym(\Omega).
	$$
	Suppose further that for some $ i \neq j$, 
	\begin{itemize}
		\item $\Alt(\Omega_i) \leq \Gamma_i$ and $\Alt(\Omega_j) \leq \Gamma_j$
		\item $\Gamma \cap (\Gamma_i \times \Gamma_j) \neq 1$
		\item $\Gamma \cap \Gamma_i = \Gamma \cap \Gamma_j = 1$.
	\end{itemize}
	Then the action $\Gamma$ on $\Omega$ admits a proper, non-trivial block system. Hence the action of $\Gamma$ on $\Omega$ is not strongly orbit primitive. 
\end{lemma}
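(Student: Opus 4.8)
The plan is to show that the three bulleted hypotheses force a $\Gamma$-equivariant bijection between two of the orbits, and then to read off a block system from it. After relabelling we may assume $i=1$ and $j=2$.

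First I would set $N:=\Gamma\cap(\Gamma_1\times\Gamma_2)$, viewing $\Gamma_1\times\Gamma_2$ as the direct factor $\Gamma_1\times\Gamma_2\times\{1\}\times\cdots\times\{1\}$ of $\Gamma_1\times\cdots\times\Gamma_k$. Then $N\unlhd\Gamma$ (it is $\Gamma$ intersected with a normal subgroup of the ambient product) and $N\ne 1$ by hypothesis. Writing $\pi_1,\pi_2$ for the two coordinate projections restricted to $N$, we have $\ker(\pi_1|_N)=N\cap\Gamma_2=\Gamma\cap\Gamma_2=1$ and similarly $\ker(\pi_2|_N)=1$, so both are injective and $N$ is the graph of an isomorphism $\theta\colon A\to B$, where $A:=\pi_1(N)\le\Gamma_1$ and $B:=\pi_2(N)\le\Gamma_2$. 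Because $N\unlhd\Gamma$ and $\Gamma$ is sub-direct, $A$ is a nontrivial normal subgroup of $\Gamma_1$ and $B$ a nontrivial normal subgroup of $\Gamma_2$ (compare Proposition~\ref{normal in subdirect}). Applying Lemma~\ref{normal subs of perm groups} inside $\Gamma_1$ gives $\Alt(\Omega_1)\le A$; moreover $\Alt(\Omega_1)$ is the \emph{unique} minimal normal subgroup of $A$ (it is simple, normal in $A$, and, by the same lemma applied with $A$ in the role of $G$, contained in every nontrivial normal subgroup of $A$), and likewise $\Alt(\Omega_2)$ is the unique minimal normal subgroup of $B$. Hence $\theta$ carries $\Alt(\Omega_1)$ onto $\Alt(\Omega_2)$.

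The next step is to realise $\theta|_{\Alt(\Omega_1)}$ by a bijection of the underlying sets. From $\Alt(\Omega_1)\cong\Alt(\Omega_2)$ and the fact that a finitary alternating group on an infinite set has the same cardinality as the set, we get $|\Omega_1|=|\Omega_2|$; fixing a bijection to identify $\Omega_2$ with $\Omega_1$ turns $\theta|_{\Alt(\Omega_1)}$ into an automorphism of $\Alt(\Omega_1)$, which by Theorem~\ref{autos of perm groups} (the normaliser of $\Alt(\Omega_1)$ in $\Sym(\Omega_1)$ being all of $\Sym(\Omega_1)$) is conjugation by some element of $\Sym(\Omega_1)$. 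Composing with the identification, we obtain a bijection $f\colon\Omega_1\to\Omega_2$ with $\theta(a)=f^{-1}af$ (transport of structure along $f$) for every $a\in\Alt(\Omega_1)$. Now I would use normality of $N$ to make $f$ globally equivariant: for $\gamma=(\gamma_1,\dots,\gamma_k)\in\Gamma$ and $a\in A$, conjugating $(a,\theta(a),1,\dots,1)\in N$ by $\gamma$ and using $N\unlhd\Gamma$ yields $\theta(\gamma_1^{-1}a\gamma_1)=\gamma_2^{-1}\theta(a)\gamma_2$; restricting to $a\in\Alt(\Omega_1)$ and using the previous sentence, this says that $\gamma_2$ and $f^{-1}\gamma_1 f$ induce the same conjugation on $\Alt(\Omega_2)$, and since the centraliser of $\Alt(\Omega_2)$ in $\Sym(\Omega_2)$ is trivial we conclude $\gamma_2=f^{-1}\gamma_1 f$, i.e.\ $f(\omega\gamma)=f(\omega)\gamma$ for all $\omega\in\Omega_1$ and all $\gamma\in\Gamma$.

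Finally I would extract the block system. By equivariance of $f$, the partition of $\Omega$ into the two-element sets $\{\omega,f(\omega)\}$ (for $\omega\in\Omega_1$) together with all singletons in $\Omega_3\cup\cdots\cup\Omega_k$ is $\Gamma$-invariant; it is non-trivial (it has non-singleton parts) and proper (all orbits are infinite, so no part contains an orbit). Picking one part per orbit gives a proper, non-trivial block system for $\Gamma$ acting on $\Omega$, and by the characterisation of strong orbit primitivity in terms of block systems the action is not strongly orbit primitive. The main obstacle is the middle step — identifying $\theta$ on $\Alt(\Omega_1)$ as transport of structure along a bijection and then upgrading that bijection to a $\Gamma$-equivariant one; once that is done the rest is bookkeeping.
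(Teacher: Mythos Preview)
Your proof is correct and follows the same overall strategy as the paper: set $N=\Gamma\cap(\Gamma_i\times\Gamma_j)$, use Goursat's lemma to identify $N$ with the graph of an isomorphism between nontrivial normal subgroups of $\Gamma_i$ and $\Gamma_j$ (each therefore containing the relevant alternating group), invoke Theorem~\ref{autos of perm groups} to realise that isomorphism (on the alternating part) as transport along a bijection $f\colon\Omega_i\to\Omega_j$, and then build two-point blocks $\{\omega,f(\omega)\}$.

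The one genuine difference lies in the endgame. The paper does \emph{not} establish that the bijection is $\Gamma$-equivariant; instead it fixes a single $\omega$ and verifies directly that $B=\{\omega,(\omega)\rho\}$ is a block via a fixed-point-set computation on $N_\omega=\stab_N(\omega)$, showing $\fix(N_\omega)\cap(\Omega_i\cup\Omega_j)=\{\omega,(\omega)\rho\}$ and deducing the block condition from the conjugacy $(N_\omega)^g=N_{\omega g}$. Your route---deducing $\gamma_2=f^{-1}\gamma_1 f$ for every $\gamma\in\Gamma$ from normality of $N$ together with the triviality of the centraliser of $\Alt(\Omega_2)$ in $\Sym(\Omega_2)$---is a little slicker: once $f$ is $\Gamma$-equivariant, the $\Gamma$-invariance of the partition into pairs $\{\omega,f(\omega)\}$ is immediate and no separate block verification is needed. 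The paper's approach, by contrast, keeps the argument entirely inside the subgroup $N$ and its stabilisers, which is perhaps more in the spirit of a permutation-group calculation.
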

\begin{proof}
	We will denote the projection maps by $\pi_i: \Gamma \to \Gamma_i$ and note that these are surjective by construction. 
	
	We also denote the subgroup $N=\Gamma \cap (\Gamma_i \times \Gamma_j) \neq 1$. Note that this is a normal subgroup of $\Gamma$. Hence $\pi_i(N)$, $\pi_j(N)$ are normal subgroups of $\Gamma_i$ and $\Gamma_j$, respectively. By hypothesis, $\Gamma \cap \Gamma_i = \Gamma \cap \Gamma_j = 1$ and hence $N \cap \Gamma_i=N \cap \Gamma_j =1$. Since $\pi_i(N) \cong N/(N \cap \Gamma_j) \cong N$, we get that $\pi_i(N)$ is a non-trivial normal subgroup of $\Gamma_i$ and must therefore contain $\Alt(\Omega_i)$, by Lemma~\ref{normal subs of perm groups}. By symmetry, $\pi_j(N)$ contains $\Alt(\Omega_j)$.

	Therefore, by Goursat's Lemma, $N$ is the graph of an isomorphism between $\pi_i(N)$ and $\pi_j(N)$. For completeness, we describe the argument below. Write 
	 $$
	 N = \{ (n_i, \rho^*(n_i)) \ : \ n_i \in \pi_i(N)\}. 
	 $$
	 
	 The fact that $N \cap \Gamma_i=1$ implies that $\rho^*: \pi_i(N) \to \pi_j(N)$ is a function. And the fact that $N \cap \Gamma_j =1$ implies that $\rho^*$ is injective. The fact that the image of $\rho^*$ is $\pi_j(N)$ follows by definition. The fact that $N$ is a subgroup now implies that $\rho^*$ is a homomorphism and hence an isomorphism. 
	 
	 We now invoke Theorem~\ref{autos of perm groups} to deduce that there is a $\rho\in \Sym(\Omega)$ which restricts to a bijection from $\Omega_i$ to $\Omega_j$ so that $\rho^*(n_i) = \rho n_i \rho^{-1}$ for all $n_i \in \pi_i(N)$.
	 
	Pick some $\omega \in \Omega_i$ and define $B = \{ \omega, (\omega)\rho \}$. For any $i, j \neq k$ we let $B_k$ be any singleton set containing an element from $\Omega_k$. We claim that $\B:=\{B, B_k \}$ for $i,j \neq k$ is a block system for the action of $\Gamma$ on $\Omega$. Once we prove this claim we have the result since this is clearly non-trivial and proper. In fact the only condition from Definition~\ref{blocksystem} that is not immediate is the third condition for $B$. Namely, we need to check that $Bx \cap B \neq \emptyset \implies Bx = B$. 
	 
	 To verify the claim, we argue as follows. Define $N_{\omega}=\stab_N(\omega)$ to be the stabiliser of $\omega$ in $N$. 
	 
	 Then, for any $f\in \stab_\Gamma(\omega)$, $g \in \stab_{\Gamma}((\omega)\rho)$ and $x\in \Gamma$, we get that:
	 \begin{enumerate}
	 	\item $(N_\omega)^f=N_\omega$, $(N_\omega)^g=N_\omega$
	 	\item $\fix(N_\omega) \cap (\Omega_i \cup \Omega_j)\supseteq \{\omega, (\omega)\rho\}$
	 	\item $\fix(N_\omega) \cap (\Omega_i \cup \Omega_j) = \{\omega, (\omega)\rho\}$
	 	\item $\fix(N_\omega^x)=\fix(N_\omega)\cdot x$
	 	\item $B f\cap B = B$, $Bg \cap B=B$ and
	 	\item $B h\cap B = \emptyset$ if $h\in \Gamma\setminus \stab_\Gamma(\omega)$ or $h \in \Gamma\setminus \stab_\Gamma((\omega)\rho)$ .
	 \end{enumerate}
	 Statements (i), (ii), (iv) are clear. For (iii) we note given any $a, b, c \in \Omega_i\setminus\{\omega\}$ that $((a\;b\;c), (a\;b\;c)\rho)\in N_{\omega}$ and so $\Omega_i\cap \fix(N_\omega)=\{\omega\}$. A symmetric argument using $x, y, z\in \Omega_j\setminus\{(\omega)\rho\}$ shows that $\Omega_j\cap \fix (N_{\omega}) =\{(\omega)\rho\}$. Now (v) follows by combining the other observations, since
	 $$\{\omega, (\omega)\rho\}=\fix(N_\omega) \cap (\Omega_i \cup \Omega_j)=\fix((N_\omega)^f \cap (\Omega_i \cup \Omega_j))=\{\omega \cdot f, (\omega\rho)\cdot f\}$$
	 and so $\{\omega, (\omega)\rho\}=\{\omega, (\omega\rho)\cdot f\}$, i.e., $(\omega\rho)\cdot f=\omega\rho$. Relabelling, or by a symmetric argument, we see that if $(\omega\rho)\cdot g=(\omega)\rho$, then $\omega\cdot g=\omega$. Hence (vi) also follows and $\B$ defines a non-trivial block system for $\Gamma$.
\end{proof}

\JordanWielandt

\begin{proof} 
Let $\Gamma_i$ be the restriction of $\Gamma$ to $\Omega_i$, so that $\Gamma$ is sub-direct in $\Gamma_1 \times \cdots \times \Gamma_k$ as in Proposition~\ref{actions and subdirect}. We let $\pi_i: \Gamma \twoheadrightarrow \Gamma_i$ denote the projection maps. 
		
	Note that our hypothesis~\ref{three} implies that each $\Gamma_i$ acts primitively on $\Omega_i$. Moreover, hypothesis~\ref{two} implies that each $\Gamma_i$ contains a non-trivial finitary permutation. Hence by Theorem~\ref{cameron}, $\Alt(\Omega_i) \leq \Gamma_i$ for all $i$. Note that $\Gamma \cap \Gamma_i$ is a normal subgroup of $\Gamma_i$ by Proposition~\ref{normal in subdirect}. If $\Gamma \cap \Gamma_i \neq1$, then it must contain $\Alt(\Omega_i)$ by Lemma~\ref{normal subs of perm groups}. Therefore, in order to prove the Theorem, it is sufficient to prove that $\Gamma \cap \Gamma_i \neq 1$ for all $i$.

	Our proof will be an induction on $k$, starting with $k=2$. As observed, if $\Gamma \cap \Gamma_1 \neq 1$ then it contains $\Alt(\Omega_1)$. But this would imply, via condition \ref{two}, that $\Gamma \cap \Gamma_2 \neq 1$. By symmetry we get that $\Gamma \cap \Gamma_1 = 1$ if and only if $\Gamma \cap \Gamma_2 = 1$. However, if $\Gamma \cap \Gamma_1 = \Gamma \cap \Gamma_2=1$ then Lemma~\ref{non trivial block system} implies that the action of $\Gamma$ admits a non-trivial block system, contradicting \ref{three}. Therefore we are done in the case $k=2$, and we have proven the base case of our induction. 
	
	Let us now suppose that $k > 2$ and that the result holds for all smaller values of $k$. We first claim that we cannot have two indices, $i \neq j$ such that $\Gamma \cap \Gamma_i = \Gamma \cap \Gamma_j = 1$. We argue by contradiction that this cannot happen. Therefore, let us assume that $\Gamma \cap \Gamma_i = \Gamma \cap \Gamma_j = 1$ to derive our desired contradiction. In that case, consider the restriction of the action of $\Gamma$ on $\Omega \setminus\Omega_i$ and note that the image of $\Gamma$ in $\Sym(\Omega \setminus \Omega_i)$ is $\Gamma \cong \Gamma/(\Gamma \cap \Gamma_i)$. It is straightforward to check that the hypotheses of the Proposition apply to the action of  $\Gamma/(\Gamma \cap \Gamma_i)$ on $\Omega \setminus\Omega_i$. In particular, $\Alt(\Omega_j) \leq \Gamma/(\Gamma \cap \Gamma_i)$.

	This means that $\pi_j( \Gamma \cap (\Gamma_i \times \Gamma_j)) \neq 1$ and therefore $ \Gamma \cap (\Gamma_i \times \Gamma_j) \neq 1$. Since we are assuming that $\Gamma \cap \Gamma_i = \Gamma \cap \Gamma_j = 1$, we may apply Lemma~\ref{non trivial block system} to derive a contradiction to \ref{three}. Hence we deduce that $\Gamma \cap \Gamma_i \neq 1$ for all but a single index. Let us suppose that, without loss of generality, $\Gamma \cap \Gamma_i \neq 1$ for $i \neq 1$. We will be done if we can show that this implies $\Gamma \cap \Gamma_1 \neq 1$. 
	
	To that end, consider the element $\gamma$ from \ref{two}. Since $\pi_1(\gamma) \neq 1$, we may find a $\sigma \in \Alt(\Omega_1)$ such that $[\pi_1 (\gamma), \sigma] \neq 1$. But as $\pi_1$ is surjective, we will have a $g \in \Gamma$ such that $1 \neq \sigma = \pi_1(g) \in \Alt(\Omega_1)$ and $[\gamma, g] \neq 1$. In particular, $\pi_i([\gamma,g]) \in \Alt(\Omega_i) $ for all $i$ and is non-trivial in $\Alt(\Omega_1)$. But now, since   $\Gamma \cap \Gamma_i \neq 1$ for $i \neq 1$ and contains $\Alt(\Omega_i)$, we get that the coset $[\gamma,g] \Alt(\Omega_2) \times \cdots \times \Alt(\Omega_k)$ contains a non-trivial element of $\Gamma \cap \Gamma_1$. 
\end{proof}

\begin{remark}
	We note that condition (ii) really is essential and one can produce examples to show that the conclusion of the Theorem fails if one drops this requirement. For instance, in the notation of the proof above, it would be insufficient to assume that each $\Gamma_i$ contains an alternating group. 
\end{remark}

For our purposes we need the following

\begin{corollary}
	\label{prodalt for houghton}
	Let $n\in \{3, 4, \ldots\}$ and  $\Gamma \leq H_n$ be a subgroup of Houghton's group of full Hirsch length, with no finite orbits and whose action on the ray system is strongly orbit primitive. Then $\Gamma$ has finitely many infinite orbits, $\Omega_1, \ldots, \Omega_k$ for some $k$. Moreover, $\Alt(\Omega_1) \times \cdots \times \Alt(\Omega_k)$ is a subgroup of $\Gamma$.  
\end{corollary}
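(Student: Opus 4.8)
The plan is to obtain the Corollary as a direct application of our Jordan--Wielandt theorem, Theorem~\ref{prodalt}, to the (faithful) action of $\Gamma$ on $\Omega:=\Ra_n$. So the work reduces to checking the three hypotheses (i)--(iii) of that theorem, and the three inputs will be Lemma~\ref{4.4}, Lemma~\ref{lem:finitaryonallorbits}, and the hypothesis of strong orbit primitivity.

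For hypothesis~(i): since $\Gamma$ has full Hirsch length, Lemma~\ref{4.4} gives that $\Gamma$ acts on $\Ra_n$ with only finitely many orbits. By assumption none of these orbits is finite, so they are all infinite; listing them as $\Omega_1,\dots,\Omega_k$ we have $\bigcup_{i\le k}\Omega_i=\Ra_n$ and $\Gamma\le\Sym(\Omega_1)\times\cdots\times\Sym(\Omega_k)\le\Sym(\Ra_n)$, which is exactly hypothesis~(i). (This already establishes the first assertion of the Corollary.) For hypothesis~(ii), which is where $n\ge 3$ enters: Lemma~\ref{lem:finitaryonallorbits} supplies $\sigma\in\Gamma\cap\FSym(\Ra_n)$ whose support meets each infinite orbit of $\Gamma$ non-trivially; as every orbit is infinite, $\supp(\sigma)$ meets every $\Omega_i$, so $\sigma$ serves as the finitary element required by condition~(ii) of Theorem~\ref{prodalt}. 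Hypothesis~(iii), that the action of $\Gamma$ on $\Ra_n$ is strongly orbit primitive, is part of the hypotheses of the Corollary.

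Consequently Theorem~\ref{prodalt} applies and yields $\Gamma\ge\bigoplus_{i=1}^k\Alt(\Omega_i)$; since $k$ is finite this direct sum coincides with the direct product $\Alt(\Omega_1)\times\cdots\times\Alt(\Omega_k)$, which is therefore a subgroup of $\Gamma$, as claimed. There is no real obstacle here beyond correctly invoking the cited results: the only point worth flagging is that the two nontrivial facts being used---finiteness of the orbit set, and the existence of a \emph{single} finitary element of $\Gamma$ whose support meets every orbit---both rely on the full-Hirsch-length hypothesis (through Lemmas~\ref{4.4} and~\ref{lem:finitaryonallorbits} respectively), and it is the latter that requires $n\ge 3$, consistent with the hypothesis of the Corollary.
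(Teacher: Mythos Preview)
Your proof is correct and follows essentially the same approach as the paper: verify the three hypotheses of Theorem~\ref{prodalt} using Lemma~\ref{4.4} for finiteness of the orbit set, Lemma~\ref{lem:finitaryonallorbits} for the finitary element meeting every orbit, and the standing hypothesis for strong orbit primitivity. Your added remarks---that $n\ge 3$ is needed precisely for Lemma~\ref{lem:finitaryonallorbits}, and that the finite direct sum coincides with the direct product---are accurate and make the argument slightly more explicit than the paper's version.
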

\begin{proof}
	We simply use the previous theorem, Theorem~\ref{prodalt}. We know that $\Gamma$ has only finitely many orbits, by Lemma~\ref{4.4}. We also know that there is an element of $\gamfin$ whose support meets every (infinite) orbit by Lemma~\ref{lem:finitaryonallorbits}.  Hence the hypotheses of the Theorem are satisfied and we are done. 
\end{proof}

Recall, for any $G\le H_n$ and $X\subseteq \Ra_n$, that:
\begin{itemize}
\item $\gfin=G\cap \FSym(\Ra_n)$;
\item $\stab_G\{X\}$ denotes the setwise stabilizer;
\item $\stab_G(X)$ denotes the pointwise stabilizer; and
\item $W(G)$ is the quotient of $G\cap\stab_G\{B\}$ by $G\cap\stab_G(B)$, i.e.\
the group of automorphisms of $B$ induced by $G$.
\end{itemize}

In the case where $G$ admits a block system, $\mathcal{B} = \{ B_1, \ldots, B_k\}$, and $K$ is any subgroup of $G$ we set 
$$W_r(K)=\frac{K\cap\stab\{B_r\}}{K\cap\stab(B_r)}.$$

Given $K\le L\le G$, we have that there is a natural inclusion $W_r(K)\subseteq W_r(L)$, for any $r$. The remainder of this section is devoted to proving the following.

\begin{proposition} \label{6.1}
	Let $n\in \{3, 4, \ldots\}$ and $G\le H_n$ be a level subgroup, where every orbit is infinite. Let $\mathcal{B} = \{ B_1, \ldots, B_k\}$ be a non-trivial proper block system for $G$ and let $N$ be the kernel of the action of $G$ on $\Ra/\sim$ where $\sim$ is the congruence determined by $\mathcal{B}$. Then:
	\begin{enumerate}
		
		\item
		$\gfin$ acts transitively on each $G$-orbit. In particular, for any block $B_i$ and any $g \in G$, there exists an $x \in \gfin$ such that $B_ig=B_ix$;
		\item
		there is a fixed constant $e\in \N$, depending on $G$, such that any proper block system $\{B_1, \ldots, B_k\}$ has $|B_i|\le e$ for $i=1, \ldots, k$. In particular any proper block system is finite and $G$ must admit a maximal proper finite block system;
		\item
		$N\subseteq \gfin$; and
		\item assuming that $\mathcal{B}$ is a maximal finite block system, we have, for each $r$, that $W_r(N) = W_r(\gfin) = W_r(G)$ under the natural inclusions given above.
	\end{enumerate}
\end{proposition}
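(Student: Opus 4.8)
The plan is to establish the four statements in order, each feeding into the next. For (i), since $G$ has full Hirsch length and all orbits are infinite, I would argue that $\gfin$ is transitive on each $G$-orbit $\mathcal{O}$ as follows: pick $v, w \in \mathcal{O}$, so $w = vg$ for some $g \in G$. Since $G$ is level (and $n \ge 3$), for any pair of rays we can find elements whose translation vectors are supported on just those two rays; using Lemma~\ref{3.1} such elements are products of finitely many disjoint cycles. By composing $g$ with a suitable product of such "conveyor-belt" elements one can cancel the translation component of $g$ along every ray, producing $x \in G$ with $vx = w$ and $t(x) = 0$, i.e.\ $x \in \gfin$. The detail to be careful about is checking that the correction can be made to land exactly on $w$ rather than merely in the same orbit — but since $\gfin \le G$ already acts transitively within the overlap of infinite orbits with a ray (any two points of $\mathcal{O}$ on the same ray differ by a finitary permutation built from a single cycle supported inside $\mathcal{O}$, using Lemma~\ref{inforbs}), this reduces to moving $w$ onto the correct ray first, which the level hypothesis allows. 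The "in particular" clause for blocks is then immediate: given $g \in G$ and a block $B_i$, pick any point $v \in B_i$, find $x \in \gfin$ with $vx = vg$, and then $B_ix \cap B_ig \ne \emptyset$ forces $B_ix = B_ig$ since both are translates of the same block.

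For (ii), set $e := |H_n : G\FSym|$, which is finite because $G$ has full Hirsch length. This is exactly the constant appearing in Lemma~\ref{4.6}: by the reduction recorded before that lemma we may assume no point of $\Ra_n$ lies in a finite $G$-orbit, and then Lemma~\ref{4.6} gives $|B_i| \le e$ for every block in every proper block system. Since block sizes are uniformly bounded, any proper block system is finite; and among finite proper block systems there is one which is not refined by any coarser one — more precisely, since we can always refine toward the partition into orbits and coarsen within the bound $e$, a maximal (coarsest) finite proper block system exists. I would phrase "maximal" carefully here: a finite proper block system $\mathcal{B}$ is maximal if the congruence $\sim$ it generates is not properly contained in any other congruence whose nonsingleton classes are finite and of size $\le e$ — a maximum exists because the relevant poset of such congruences has chains that stabilise.

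For (iii), I would use Corollary~\ref{cameron2} or directly the structure of $N = \ker(\rho)$: $N$ consists of those $g \in G$ acting trivially on $\Ra/\sim$, i.e.\ stabilising every block $B_ig'$ setwise. If some $g \in N$ had a nonzero translation vector, then on some ray $R_j$ it eventually acts as $(j,m) \mapsto (j, m + t_j(g))$ with $t_j(g) \ne 0$; but a block $B_ig'$ lying far out along $R_j$ (which exists by Lemma~\ref{4.5}, since all but finitely many block translates are contained in a single ray) would be moved off itself by $g$, contradicting $g \in N$. Hence $t(g) = 0$, so $g \in \gfin$ and $N \subseteq \gfin$.

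The main obstacle — and the heart of the proposition — is (iv). We must show $W_r(N) = W_r(\gfin) = W_r(G)$. The inclusions $W_r(N) \subseteq W_r(\gfin) \subseteq W_r(G)$ are automatic from (iii) and the natural inclusions, so the content is $W_r(G) \subseteq W_r(N)$: every permutation of $B_r$ induced by some element of $G$ is already induced by an element of the kernel $N$. Here is where maximality of $\mathcal{B}$ is essential. Suppose $\sigma \in W_r(G)$ is induced by $g \in G \cap \stab\{B_r\}$ but by no element of $N \cap \stab\{B_r\}$. I would consider the congruence $\sim'$ generated by the "graphs" pairing $B_r$ with the image of $B_r$ under the action of $\langle N, g\rangle$-orbit data — more precisely, build a finer equivalence relation that records the $\langle W_r(N), \sigma\rangle$-orbit structure inside each block translate, and show that if $\sigma \notin W_r(N)$ this produces a proper block system strictly refining $\mathcal{B}$ (by splitting $B_r$ according to $W_r(N)$-orbits, which $\sigma$ would have to respect were it in $N$), or alternatively a coarsening, contradicting maximality of $\mathcal{B}$ as the maximum finite proper block system. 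The delicate point is ensuring the new relation is genuinely $G$-equivariant: one uses (i), that $\gfin$ moves any block translate to any other in the same orbit, to propagate a splitting of $B_r$ consistently across the whole $G$-orbit of $B_r$, and one uses that $N \trianglelefteq G$ so that $W_r(N)$ is a normal subgroup of $W_r(G)$ (via Proposition~\ref{normal in subdirect} applied to the subdirect decomposition of the block-stabiliser action), making the quotient $W_r(G)/W_r(N)$ act on the set of $W_r(N)$-orbit-blocks inside $B_r$; the induced partition of $B_r$ then generates, together with the $\gfin$-transport, a $G$-congruence finer than $\sim$ but still finite, contradicting maximality unless $W_r(N) = W_r(G)$.
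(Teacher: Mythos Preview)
Your approach to part~(iv) has a genuine gap, and the direction of the argument is inverted. ``Maximal'' here means \emph{coarsest}: $\mathcal{B}$ is maximal precisely when the induced action of $\Gamma=G/N$ on $\Ra/\sim$ is strongly orbit primitive. So producing a $G$-congruence \emph{finer} than $\sim$ (smaller blocks) contradicts nothing. Your proposed construction --- partitioning $B_r$ by $W_r(N)$-orbits and transporting by $\gfin$ --- yields at best a refinement, and you never explain how one would instead coarsen $\sim$ from the hypothesis $W_r(N)\neq W_r(G)$.

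The paper's argument for~(iv) is entirely different and proceeds in two separate steps. First, $W_r(\gfin)=W_r(G)$ is obtained by a commutator trick using only the level hypothesis: after moving $B=B_r$ onto a single ray, one takes $g\in\stab\{B\}$, picks $f\in G$ with the same translation component as $g$ on a second ray but fixing $B$ pointwise, and an $x\in G$ carrying $B$ far along that second ray; then one checks that $g$ and the product of commutators $(gxg^{-1}x^{-1})(xfx^{-1}f^{-1})$ induce the same permutation of $B$, so the action of $g$ on $B$ is realised by an element of $[G,G]\le\gfin$. Second, $W_r(N)=W_r(\gfin)$ uses maximality via Corollary~\ref{prodalt for houghton}: since $\Gamma$ acts strongly orbit primitively on $\Ra/\sim$, it contains the product of alternating groups on its orbits and hence acts highly transitively on each. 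Given $g\in\gfin$ preserving $B$, let $C_1,\dots,C_r$ be the other blocks meeting $\supp(g)$ and choose a block $D\neq B,C_1,\dots,C_r$ in the $\Gamma$-orbit of $B$; high transitivity furnishes $x\in\gfin$ swapping $B$ and $D$ while fixing each $C_i$ setwise, and then $h=x^{-1}g^{-1}xg$ lies in $N$ and agrees with $g$ on $B$. Neither of these mechanisms appears in your proposal; in particular you never invoke the Jordan--Wielandt machinery, which is what makes maximality usable.

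Smaller issues: in~(i) your argument is close to circular --- you assert that two points of $\mathcal{O}$ on the same ray already differ by a finitary element of $G$, but this presupposes what is being proved. The paper instead shows $G_pG_{\textit{fin}}=G$ for every $p$, by using the level hypothesis to produce generators of $\pi(G)$ each vanishing on some ray and then conjugating them into $G_p$. In~(ii) you apply Lemma~\ref{4.6} directly, but that lemma assumes the block is already finite; you must first use part~(i) together with Lemma~\ref{4.3} to rule out infinite blocks (an infinite block is $\gfin$-invariant, hence contains an orbit, hence is not proper).
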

\begin{proof}
	To prove (i), let $\pi$ denote the abelianisation map: $\pi(G)= G \hfin/\hfin \cong G /\gfin$. Let $p$ be a point in the ray system, and let $G_p$ be the stabiliser of that point in $G$. We claim that $\pi(G_p) = \pi(G)$. 
	
	To see this, first use that $G$ is level to find a finite set of elements, $f_1, \ldots, f_M$ with the following properties: 
	\begin{itemize}
		\item $\langle \pi(f_j) \rangle = \pi(G)$, and 
		\item Each $f_j$ has zero translation part on some ray. 
	\end{itemize}
	
	Next note that since no orbit of $G$ is finite, by hypothesis, Lemma~\ref{inforbs} says that the $G$-orbit of $p$ meets every ray in an infinite set. In particular, the $G$-orbit of $p$ meets the fixed set of each $f_j$. Therefore, we may find elements $g_j \in G$ such that $f_j$ fixes $p {g_j}^{-1}$. Hence $f_j^{{g_j}}$ fixes $p$ for every $j$. Moreover, it is clear (since the image is abelian) that
	$$
	\langle \pi(f_j) \rangle = \langle \pi(f_j^{{g_j}}) \rangle. 
	$$  
	
	Hence we have shown that $\pi(G_p) = \pi(G)$. But this means that $G_p \gfin = G$ and hence that $\gfin$ acts transitively on $pG$ and hence every orbit of $G$. 
	
	For (ii), note that each block is finite by Lemma \ref{4.3}, since an infinite block would admit a $\gfin$ action and hence contain a $G$ orbit by part (i), contradicting the definition of a \textit{proper} block system. The existence of the constant $e$ follows from Lemma \ref{4.6}.
	
	Part (iii) follows from Lemma~\ref{3.1}, since any element of $N$ can only act with finite orbits.

	All that remains is to show (iv). Assume that $\mathcal{B}$ is maximal amongst finite block systems. We will first show that $W_r(\gfin) = W_r(G)$.
	
	We can fix a finite set $F\subset G$ of group elements so that for all $g\in G$ and all $i\ne j$, there exists $f\in F$ and a natural number $q$ such that such that the translation components of $g$ and $f^q$ agree on the $j$th ray and so that $f$ has translation component $0$ on the $i$th ray.
	Replacing $B:=B_r$ by a translate if necessary, we may assume that $B$ is entirely contained in the first ray $R_1$, and that it is in the region where all members of $F$ act in accordance with their translation component.

	Let $g$ be an element of $\stab\{B\}$. Choose $f$ so that $f$ and $g$ have the same translation component on the second ray and so that $f$ fixes $B$ pointwise. (A positive power of a suitable element of $F$ will do this.) Choose $x$ so that $Bx$ lies in the part of the second ray where both $g^{-1}$ and $f^{-1}$ act by their translation components. Hence $g^{-1}$ and $f^{-1}$ act in the same way on $Bx$, and since $g$ acts on $B$ we get that $bgxg^{-1}= bgfxf^{-1}$ for all $b \in B$. Therefore, $gxg^{-1}x^{-1}$ and $gfxf^{-1}x^{-1}$ both act in the same way on $B$; in particular, 
	$$
	b(gx g^{-1}x^{-1})( x f x^{-1}f^{-1}) = bg, \text{ for all } b \in B.
	$$
	
	 It follows that $g$ acts on $B$ in the same way as some element of the commutator subgroup, and so in particular there is a finitary permutation acting the same way as $g$, proving that $W_r(\gfin) = W_r(G)$. As $r$ was arbitrary, we have proved this for each $r$.

	Now, by part (ii), $G/N$ acts with no non-trivial proper block system on $\Ra/\sim$, and hence the result follows as in the work of Neumann, \cite[\S5, Lemma 5.4]{MR0401928}, (where it is proved that $W(N) = W(\gfin)$). In particular, we are invoking Proposition~\ref{ordering blocks}, to say that $\Ra/\sim$ is order isomorphic to $\Ra$ and that $\Gamma:=G/N$ is a Houghton group acting faithfully on the ray system $\Ra/\sim$. Furthermore, it is clear that $G/\gfin$ and $\Gamma/\gamfin$ are isomorphic, so that $\Gamma$ has full Hirsch length. Therefore Corollary~\ref{prodalt for houghton} applies to the action of $\Gamma$ on $\Ra/\sim$.  In particular, the induced $G$ action on $\Ra/\sim$, when restricted to an orbit, is $k$-transitive for every $k$. 
	
	Explicitly, we start with a $g \in \gfin$ which preserves a block, $B$. We want to show that there exists an $h \in N$ such that $g$ and $h$ induce the same permutation on $B$. 
	
	Since the support of $g$ is finite we may find blocks, $C_1, \ldots, C_r$ such that the support of $g$ is contained in the union, $B \cup  \bigcup_{i=1}^r C_i$. Let $D$ be some other block distinct from these and in the same $G$-orbit as $B$. Since $G$ acts on $\Ra/\sim$ with no non-trivial proper block system, Corollary~\ref{prodalt for houghton} implies that $\gfin$ acts highly transitively on $\Ra/\sim$ in the following sense; there exists some $x \in \gfin$ such that $C_i x = C_i$, for $1 \leq i \leq r$ and $x$ transposes $B$ and $D$. (It may also permute further blocks). 
	
	Now put $h=x^{-1} g^{-1} xg$. It is then readily checked that $h \in N$ and that $h$ induces the same permutation on $B$ as $g$ does. 
\end{proof}

We are have now assembled all the results needed to prove the following.

\structure

\begin{proof}
	Let $n \ge 3$ and $G \leq H_n$ be a subgroup of full Hirsch length, $h(G)=n-1$. By Proposition~\ref{level and inforbs}, $G$ is commensurable to a level subgroup of $H_n$ with no finite orbits. Therefore we may assume that $G$ is level, has no finite orbits and acts with finitely many orbits, by Lemma~\ref{4.4}.

	By Proposition~\ref{6.1}, $G$ admits a maximal block system, inducing a $G$-congruence, $\sim$, where each equivalence class is finite. Therefore, we get a homomorphism $\rho: G \to \Sym( \Ra / \sim)$. But by Proposition~\ref{ordering blocks}, $\Ra/ \sim$ is order isomorphic to the ray system and the image of this homomorphism is a subgroup of the Houghton group of this ray system. Thus by Theorem~\ref{kk} and Lemma~\ref{restricted}, we get an injective homomorphism $\kappa$ from $G$ to a restricted multi-wreath product, $\mathcal{W} \wr \Gamma$, where $\Gamma$ is a subgroup of the Houghton group on $\Ra/ \sim$. That is, $\Gamma \leq H_n$. 
	
	We next claim that $\Gamma$ is a subgroup of full Hirsch length. Observe that if $g \in \gfin$, then $\rho(g)$ has finite order and so must belong to $\Gamma_{\textit{fin}}$. Conversely, if $\rho(g)$ has finite order, then some power of $g$ lies in the kernel of $\rho$ and hence preserves every (finite) block. In particular, $t_i(g)=0$ for all $i$ and so $g \in \gfin$. Hence $\rho^{-1} (\Gamma_{\textit{fin}}) = \gfin$.  This implies that $G/\gfin$ is isomorphic to $\Gamma/\Gamma_{\textit{fin}}$ by the Third Isomorphism Theorem and hence $\Gamma$ is a subgroup of full Hirsch length in $H_n$. Moreover, by construction, $\Gamma$ has no finite orbits and, in fact, acts on the ray system $\Ra/ \sim$ with no proper, non-trivial block system. (If $\Gamma$ were to admit a proper, non-trivial block system, then this could be pulled back to $\Ra$, contradicting the maximality of $\sim$). Therefore, Theorem~\ref{thm-kappaG} will finish the result as long as we satisfy the hypotheses there. 
	
	Hypothesis (i) of Theorem~\ref{thm-kappaG} is proven in Proposition~\ref{6.1} (ii). Hypothesis (ii) of Theorem~\ref{thm-kappaG} is Proposition~\ref{6.1} (iv) and hypothesis (iii) of Theorem~\ref{thm-kappaG} is given by Corollary~\ref{prodalt for houghton}. 
	  
\end{proof}

We note the following special case.

\begin{corollary} \label{level and wreath}
	Let $n\in \{3, 4, \ldots\}$ and let $G \leq H_n$ be a transitive subgroup of full Hirsch length which is also level. Then $G$ is a commensurable to a restricted wreath product, $A \wr_{\Ra_n} \Gamma$, where $A$ is a finite group and $\Gamma$ is a finite index subgroup of $H_n$.  
\end{corollary}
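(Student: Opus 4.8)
The plan is to read this corollary off from the proof of Theorem~\ref{thm:structure}, specialised to the case of a single orbit, and then to feed the head of the resulting wreath product into Corollary~\ref{cameron2} in order to upgrade ``full Hirsch length'' to ``finite index''.

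The first point is that transitivity makes the preliminary reductions vacuous and collapses the number of blocks to one. Since $G$ is transitive and $h(G)=n-1>0$, its unique orbit is the infinite set $\Ra_n$, so $G$ has no finite orbits, and $G$ is level by hypothesis; hence the reductions packaged in Proposition~\ref{level and inforbs} contribute nothing new. I would then invoke the block-system analysis of Proposition~\ref{6.1}. If the only proper block systems for $G$ are trivial, then $G$ itself acts strongly orbit primitively --- equivalently, primitively, since there is a single orbit --- and setting $\Gamma:=G$, $A:=1$ reduces the claim to showing $\Gamma\le_f H_n$, handled below. Otherwise there is a maximal finite proper block system $\mathcal B=\{B_1,\dots,B_k\}$; but condition~(ii) of Definition~\ref{blocksystem} demands that each $G$-orbit meet exactly one $B_i$, and with only one orbit this forces $k=1$. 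So the multi-wreath product of Section~\ref{sec:multiwreath} degenerates to the ordinary restricted wreath product $A\,\overline{wr}_{\Omega}\,\Gamma$, where $A:=W_1(G)$ is finite by Proposition~\ref{6.1}(ii), $\Omega:=\Ra_n/\sim$ for the associated congruence $\sim$, $\rho\colon G\to\Sym(\Omega)$ is the induced map, and $\Gamma:=\rho(G)$.

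The remainder is a repetition of the proof of Theorem~\ref{thm:structure} with $k=1$ throughout: Proposition~\ref{ordering blocks} identifies $\Omega$ with $\Ra_n$ as ordered sets and places $\Gamma$ inside the Houghton group $H(\Omega)$; Theorem~\ref{kk} together with Lemma~\ref{restricted} yields an injection $\kappa$ of $G$ into the restricted wreath product $A\,\overline{wr}_{\Omega}\,\Gamma$; the identity $\rho^{-1}(\Gamma_{\textit{fin}})=\gfin$ shows $\Gamma$ has full Hirsch length; and Theorem~\ref{thm-kappaG}, whose hypotheses (i)--(iii) are supplied respectively by Proposition~\ref{6.1}(ii), Proposition~\ref{6.1}(iv) and Corollary~\ref{prodalt for houghton}, shows $\kappa(G)$ has finite index in $A\,\overline{wr}_{\Omega}\,\Gamma$, so that $G$ is commensurable with $A\,\overline{wr}_{\Omega}\,\Gamma$. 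Finally, since $\Gamma$ has full Hirsch length in $H(\Omega)$ and (as $k=1$) acts primitively on $\Omega$, Corollary~\ref{cameron2} --- applied to $H(\Omega)$, which is a copy of $H_n$ because $\Omega\cong\Ra_n$ as well-ordered sets --- gives $\Gamma\le_f H(\Omega)$. Transporting through the order isomorphism $\Omega\cong\Ra_n$ of Proposition~\ref{ordering blocks}, we may assume $\Gamma\le_f H_n$ and that the wreath product is taken over $\Ra_n$, which is the assertion.

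I do not anticipate a genuine obstacle here: the substance is entirely contained in Theorem~\ref{thm:structure} and Corollary~\ref{cameron2}, and the corollary amounts to the two bookkeeping observations that transitivity forces $k=1$ and that a one-orbit action is strongly orbit primitive exactly when it is primitive (immediate from Definition~\ref{def: strongly orbit primitive}). The one delicate point is keeping the two copies of the ray system straight --- the original $\Ra_n$ on which $G$ acts and the quotient $\Ra_n/\sim$ on which $\Gamma$ acts --- and checking that the order isomorphism of Proposition~\ref{ordering blocks} carries the $\Gamma$-action on $\Omega$ to the natural action on $\Ra_n$, so that Corollary~\ref{cameron2} genuinely applies to $\Gamma$.
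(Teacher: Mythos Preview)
Your proposal is correct and follows essentially the same approach as the paper: both recognise that the proof of Theorem~\ref{thm:structure} specialises directly, since $G$ being level removes the need to pass to a finite index subgroup at the outset, and transitivity forces $k=1$ so that the multi-wreath product collapses to an ordinary restricted wreath product. The one point you make explicit that the paper leaves implicit is the final invocation of Corollary~\ref{cameron2} to upgrade $\Gamma$ from ``full Hirsch length and primitive'' to ``finite index in $H_n$''; this is a genuine ingredient, since Theorem~\ref{thm:structure} only concludes full Hirsch length for the head, and the corollary's statement demands more.
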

\begin{proof}
The proof is exactly the same as above, but the fact that $G$ is already level means that we do not have to take a finite index subgroup at the first stage and so the multi-wreath product above is just a wreath product due to the transitivity of $G$. 
\end{proof}

\begin{remark}
	This last corollary allows a different structure theorem for subgroups of full Hirsch length. It allows one to show that any subgroup of full Hirsch length is commensurable to a sub-direct product of restricted wreath products (rather than a multi-wreath product). However, being commensurable to a multi-wreath product is stronger than being commensurable to a sub-direct product of wreath products. In particular, it is unclear how one would ascertain finiteness properties for sub-direct products of wreath products, as sub-direct products are in general fairly wild. Nevertheless, we do use this sub-direct structure when proving finite generation, Theorem~\ref{finite gen}. 
\end{remark}

\section{Subgroups of \texorpdfstring{$H_n$ } \   of full Hirsch length are finitely generated}\label{sec:n=2}

We will argue that subgroups of full Hirsch length in $H_n$ are finitely generated by proving this for the transitive case and then deducing it for the general case via sub-direct products. Note that the case $n=2$ is slightly different to the rest as there are subgroups of full Hirsch length of $H_2$ which do not contain the alternating group. 

\subsection*{The ascending chain condition on normal subgroups}

We will now deal with finite generation and, along the way, prove results about the ascending chain condition on normal subgroups for certain wreath products and subgroups of Houghton's group of full Hirsch length. 

\begin{definition}
	\label{maxn}
	We say that a group $G$ satisfies \textbf{max-n} if it satisfies the ascending chain condition on normal subgroups. 
	
	That is, for any chain of normal subgroups, $N_0 \leq N_1 \leq \ldots \leq N_k \leq \ldots$, there exists a natural number $s$ such that $N_s=N_{s+t}$ for all $t \geq 0$. Or, to put it another way, that any sequence of ascending normal subgroups $(N_i)_{i\in\N}$ of $G$ is eventually constant.
\end{definition}

The following lemma is standard.

\begin{lemma}
	Let $G$ be a group. Then the following are equivalent:
	\begin{enumerate}[(i)]
		\item $G$ satisfies \textbf{max-n}.
		\item Every non-empty family of normal subgroups of $G$ admits at least one maximal element under inclusion.
		\item Every normal subgroup $N$ of $G$ is finitely normally generated. That is, there exists a finite subset $F$ of $G$ such that $N$ is the normal closure of $F$. (Equivalently, $N$ is the smallest normal subgroup of $G$ containing $F$).
	\end{enumerate}
\end{lemma}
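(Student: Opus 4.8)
The plan is to prove the cycle of implications (i) $\Rightarrow$ (ii) $\Rightarrow$ (iii) $\Rightarrow$ (i), following the standard pattern for Noetherian-type conditions on a poset, here the lattice of normal subgroups of $G$ ordered by inclusion.

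For (i) $\Rightarrow$ (ii), I would argue by contraposition. Suppose some non-empty family $\mathcal{F}$ of normal subgroups of $G$ has no maximal element. Choose $N_0\in\mathcal{F}$; since $N_0$ is not maximal in $\mathcal{F}$, there is $N_1\in\mathcal{F}$ with $N_0<N_1$, and recursively one obtains $N_i\in\mathcal{F}$ with $N_i<N_{i+1}$ for all $i$. This strictly ascending chain never stabilises, contradicting \textbf{max-n}. (The recursive choice uses dependent choice, which is harmless here.)

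For (ii) $\Rightarrow$ (iii), let $N$ be a normal subgroup of $G$ and let $\mathcal{F}$ be the family of those normal subgroups of $G$ that are contained in $N$ and are finitely normally generated. This family is non-empty, as the trivial subgroup lies in it, so by (ii) it has a maximal element $M$. If $M\neq N$, pick $x\in N\setminus M$ and consider $\langle M,x\rangle^{G}$, the normal closure in $G$ of $M$ together with $x$. This is again finitely normally generated (by a finite normal generating set of $M$ augmented by $x$), is contained in $N$, and strictly contains $M$, contradicting maximality. Hence $M=N$, so $N$ is finitely normally generated.

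For (iii) $\Rightarrow$ (i), given an ascending chain $N_0\leq N_1\leq\cdots$ of normal subgroups of $G$, form $N=\bigcup_{i}N_i$. This is again a subgroup, and it is normal in $G$ because conjugation by any $g\in G$ carries each $N_i$ into itself. By (iii), $N=\langle F\rangle^{G}$ for some finite set $F\subseteq N$. Each element of $F$ lies in some $N_i$; letting $s$ be the maximum of these finitely many indices, we get $F\subseteq N_s$, whence $N=\langle F\rangle^{G}\leq N_s\leq N$, so $N_t=N_s$ for all $t\geq s$.

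I do not expect any genuine obstacle: the statement is entirely standard, essentially the group-theoretic analogue of the equivalence of the ascending chain condition, the maximal condition, and finite generation for modules. The only points deserving a phrase of care are that the union of an ascending chain of normal subgroups is again a normal subgroup, and the (routine) appeal to dependent choice in the first implication.
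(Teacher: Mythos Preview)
Your proof is correct and entirely standard. The paper in fact provides no proof at all for this lemma, stating only that it ``is standard''; your argument supplies exactly the routine Noetherian-type cycle of implications that the authors omit.
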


\begin{remark}
	In the subsequent discussion we will mainly use condition (iii) above when verifying \textbf{max-n}.
\end{remark}

Our main tool to proving finite generation will be the following. 

\begin{proposition}
	\label{finite gen for subdirect}
	Let $G$ be a sub-direct product of $\Gamma_1 \times \cdots \times \Gamma_k$. Suppose that:
	\begin{itemize}
		\item Each $\Gamma_i$ is finitely generated; and
		\item Each $\Gamma_i$ satisfies \textbf{max-n}. 
	\end{itemize}
Then $G$ is finitely generated and satisfies \textbf{max-n}.

\end{proposition}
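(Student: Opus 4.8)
The plan is to prove finite generation first and then deduce \textbf{max-n}, since the latter is really a statement about arbitrary normal subgroups and a similar argument applies. For finite generation, recall that $G$ is a sub-direct product of $\Gamma_1 \times \cdots \times \Gamma_k$, so each projection $\pi_i : G \to \Gamma_i$ is onto. I will argue by induction on $k$. The case $k=1$ is trivial. For the inductive step, let $P = \pi_1 \times \cdots \times \pi_{k-1}$ be the projection of $G$ onto $\Gamma_1 \times \cdots \times \Gamma_{k-1}$; its image $G'$ is again a sub-direct product of $\Gamma_1 \times \cdots \times \Gamma_{k-1}$, so by induction $G'$ is finitely generated and satisfies \textbf{max-n}. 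The kernel of $P$ restricted to $G$ is $G \cap \Gamma_k$, which by Proposition~\ref{normal in subdirect} is a normal subgroup of $\Gamma_k$; since $\Gamma_k$ satisfies \textbf{max-n}, $G \cap \Gamma_k$ is finitely normally generated in $\Gamma_k$ by some finite set, and in particular is the normal closure in $G$ of that finite set (as $G$ surjects onto $\Gamma_k$, the $\Gamma_k$-normal closure and the $G$-normal closure coincide). Lifting a finite generating set of $G' \cong G/(G\cap\Gamma_k)$ to elements of $G$ and adjoining the finitely many $G$-conjugacy generators of $G\cap\Gamma_k$ then gives a finite generating set for $G$.

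For \textbf{max-n}, I will show directly that every normal subgroup $N \trianglelefteq G$ is finitely normally generated, again by induction on $k$. Consider $P(N) \trianglelefteq G'$; by the inductive hypothesis $G'$ has \textbf{max-n}, so $P(N)$ is the $G'$-normal closure of a finite set $\bar F \subseteq G'$. Also $N \cap \Gamma_k = N \cap (G \cap \Gamma_k)$ is a normal subgroup of $\Gamma_k$ by Proposition~\ref{normal in subdirect}, hence finitely normally generated in $\Gamma_k$ by a finite set $E$, and since $G$ surjects onto $\Gamma_k$ this means $N \cap \Gamma_k$ is the $G$-normal closure of $E$. Now lift $\bar F$ to a finite set $F \subseteq N$ (possible because $P$ restricted to $N$ surjects onto $P(N)$). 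I claim $N$ is the $G$-normal closure of $F \cup E$: indeed if $n \in N$ then $P(n) \in P(N)$ is a product of $G'$-conjugates of elements of $\bar F^{\pm 1}$, so multiplying $n$ by a suitable product of $G$-conjugates of $F^{\pm 1}$ yields an element of $N \cap \ker P = N \cap \Gamma_k$, which lies in the $G$-normal closure of $E$. Hence every normal subgroup of $G$ is finitely normally generated, which is one of the equivalent formulations of \textbf{max-n}.

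The main technical point to get right — and the step I expect to be the real content rather than the routine bookkeeping — is the interplay between the $\Gamma_i$-normal closure and the $G$-normal closure of a subset of $G \cap \Gamma_i$. This works precisely because $G$ is \emph{sub-direct}: the surjectivity of $\pi_i : G \to \Gamma_i$ guarantees that conjugation by $G$ already realises all conjugation by $\Gamma_i$ on $\Gamma_i$ (the remaining factors centralise $\Gamma_i$), so a finite $\Gamma_i$-normal generating set of a normal subgroup of $\Gamma_i$ contained in $G$ is also a finite $G$-normal generating set. Everything else — surjectivity of restrictions of projections to normal subgroups, lifting generating sets through a short exact sequence — is standard group theory, but it needs to be assembled carefully so that the induction on $k$ goes through cleanly for both conclusions simultaneously.
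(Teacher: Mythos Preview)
Your argument for \textbf{max-n} is correct and essentially equivalent to the paper's: the paper phrases it via ascending chains of normal subgroups, you phrase it via ``every normal subgroup is finitely normally generated'', but the induction and the use of Proposition~\ref{normal in subdirect} are the same.

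There is, however, a genuine gap in your finite-generation argument. You assert that if $N=G\cap\Gamma_k$ is the $G$-normal closure of a finite set $F$, and $Y$ lifts a finite generating set of $G/N$, then $\langle Y\cup F\rangle=G$. This is \emph{not} standard group theory, and in fact it is false. Take $k=2$, $\Gamma_1=\Z$, $\Gamma_2=A_5$, and $G=\Gamma_1\times\Gamma_2$ (trivially subdirect). Then $N=G\cap\Gamma_2=A_5$ is the $G$-normal closure of $F=\{(0,(12)(34))\}$ (since $A_5$ is simple), and $G/N\cong\Z$ is generated by the image of $(1,e)$, so take $Y=\{(1,e)\}$. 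But $(1,e)$ and $(0,(12)(34))$ commute, so $\langle Y\cup F\rangle=\Z\times\Z/2\ne G$. The true ``standard'' fact requires $N$ to be \emph{generated} (not merely normally generated) by a finite set, which is exactly what you do not have here.

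The paper repairs this by throwing in a third finite set. Since $\Gamma_k$ is finitely generated and $\pi_k$ is surjective, one chooses a finite $X\subseteq G$ with $\pi_k(\langle X\rangle)=\Gamma_k$. The point you yourself identified --- that $\ker\pi_k$ centralises $\Gamma_k$, so $G$-conjugation on $F\subseteq\Gamma_k$ agrees with $\Gamma_k$-conjugation --- now gives that every $\Gamma_k$-conjugate $F^{\gamma}$ equals some $F^{g}$ with $g\in\langle X\rangle$. Hence $N$, which is generated by all $\Gamma_k$-conjugates of $F$, is contained in $\langle X,F\rangle$. Then $\langle X,F,Y\rangle$ contains $N$ and surjects onto $G/N$, so equals $G$. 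Your observation about the two normal closures coinciding is precisely the right ingredient; you just need to exploit it via $X$ rather than via $Y$.
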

\begin{proof}
We show that each desired property of $G$ is satisfied by using induction on $k$. The base case of $k=1$ is immediate for both properties.
	
	We first show that $G$ has \textbf{max-n}. Consider a sequence $(N_i)_{i\in\N}$ of ascending normal subgroups of $G$. Proposition~\ref{normal in subdirect} states, for each $i\in \N$, that $N_i \cap \Gamma_k\unlhd\Gamma_k$. Therefore, since $\Gamma_k$ satisfies \textbf{max-n}, there is an $s\in \N$ such that the subsequence $(N_{i}\cap \Gamma_k)_{i\ge s}$ is constant. 
	
	Observe, for any $i\in \N$, that $N_i \Gamma_k/\Gamma_k$ is a normal subgroup of $G \Gamma_k/\Gamma_k$. Also, $G \Gamma_k/\Gamma_k$ is sub-direct in $\Gamma_1 \times \cdots \times \Gamma_{k-1}$. Hence, by the inductive hypothesis, $G \Gamma_k/\Gamma_k$ satisfies \textbf{max-n}. Therefore, for some $t\in \N$, we have that $(N_{i+t} \Gamma_k/\Gamma_k)_{i\in\N}$ is constant. Let $d:=\max\{s, t\}$. Now, given any $m \in N_{i+d}$, there exists an $m' \in N_d$ such that $m \Gamma_k = m'\Gamma_k$ and therefore $m^{-1} m' \in N_{i+d} \cap  \Gamma_k = N_d \cap \Gamma_k$. Hence $m \in N_d$ and $G$ satisfies \textbf{max-n}.  
	
	Next we show that $G$ is finitely generated. Suppose that $k >1$ and that every group $H$ that is sub-direct in $\Gamma_1 \times \ldots  \times \Gamma_{k-1}$ is finitely generated. Let $N = G \cap \Gamma_k$. Clearly $N\unlhd G$. By Proposition~\ref{normal in subdirect}, $N\unlhd\Gamma_k$. Since $\Gamma_k$ satisfies \textbf{max-n}, there is a finite subset $F \subseteq N$ such that $N$ is generated by $\{ F^{\gamma} \ : \ \gamma \in \Gamma_k \}$. We claim that $N$ is contained in a finitely generated subgroup of $G$. 
	
	By hypothesis, $\Gamma_k$ is finitely generated. Since $G$ is sub-direct, there exists a finite subset $X\subseteq G$ such that $\pi_k(\langle X \rangle ) = \Gamma_k$. Let $\gamma \in \Gamma_k$. Then there exists some $g \in \langle X \rangle$ such that $\gamma g^{-1} \in \ker(\pi_k) \leq \Gamma_1 \times \ldots  \times \Gamma_{k-1}$. However, $\Gamma_1 \times \cdots \times \Gamma_{k-1}$ centralises $\Gamma_k$, and so $F^{\gamma} = F^g$. Hence $N \leq \langle F, X \rangle$ is contained in a finitely generated subgroup of $G$, establishing our claim. 
	
	We now invoke the induction hypothesis to say that $G/N$ is finitely generated, as it is sub-direct in $\Gamma_1 \times \ldots  \times \Gamma_{k-1}$. This implies that there exists a finite subset, $Y$ of $G$ such that for any $g \in G$, there exists a $w \in \langle Y \rangle$ such that $g w^{-1} \in N$. (That is, simply take a finite generating set for $G/N$ and pull it back to $G$). Hence $G=\langle F, X, Y\rangle$. 
\end{proof}

\begin{proposition}
	\label{transitive infinite n=2}
	Let $G$ be a level subgroup of $H_2$ which acts transitively on $\Ra_2$. Then $G$ embeds into a permutational restricted wreath product, $A \wr_{\Ra_2} \Gamma$, where $\Gamma$ is either a finite index subgroup of $H_2$ or $\Gamma \cong \Z$ acting on $\Ra_2$ by translations.

	Moreover, if we let $\pi: A \wr_{\Ra_2} \Gamma \to \Gamma$ denote the natural projection map, we have that $\pi(G) = \Gamma$. 
\end{proposition}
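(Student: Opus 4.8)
The plan is to specialise, to $n=2$ and the transitive case, the argument underlying Theorem~\ref{thm:structure}: build a maximal finite block system for $G$, apply the Kaloujnine--Krasner theorem, and then analyse the head group $\Gamma$ that results. Throughout we use that $G$ has full Hirsch length, which is the standing hypothesis of this section; this is genuinely needed, since $\FSym(\Ra_2)$ is level and transitive but satisfies neither alternative of the conclusion.

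First I would build the block system. Since $G$ has full Hirsch length, Lemma~\ref{4.6} bounds the sizes of its proper finite blocks, so $G$ admits a maximal finite block system, which by transitivity is a single finite block $B$. Let $\sim$ be the $G$-congruence it generates and $\Omega = \Ra_2/\sim$; by Proposition~\ref{ordering blocks}, $\Omega$ is order isomorphic to $\Ra_2$ and the induced homomorphism $\rho \colon G \to \Sym(\Omega)$ has image inside the Houghton group of $\Omega$, so $\Gamma := \rho(G) \le H_2$. Set $A := W(G) = \stab_G\{B\} / \stab_G(B)$, a finite subgroup of $\Sym(B)$. Applying Theorem~\ref{kk} to the faithful $G$-action on $\Ra_2$ with the block system $\{B\}$, and then Lemma~\ref{restricted} (using that $G \le H_2$ consists of almost order preserving bijections, Proposition~\ref{aopisHou}), we obtain an injection $\kappa$ of $G$ into the restricted wreath product $A \overline{wr}_\Omega \Gamma = A \overline{wr}_{\Ra_2} \Gamma$. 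Since $\kappa(g) = (\phi_g, \rho(g))$ by construction, the projection $\pi \colon A \overline{wr}_{\Ra_2} \Gamma \to \Gamma$ satisfies $\pi \circ \kappa = \rho$, so $\pi(G) = \rho(G) = \Gamma$. This settles the ``moreover'' clause and reduces the proposition to showing that $\Gamma$ is either of finite index in $H_2$ or infinite cyclic acting regularly on $\Ra_2$.

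Next I would record four facts about $\Gamma$. It is transitive on $\Omega$, being a quotient of the transitive $G$-set $\Ra_2$. It has full Hirsch length: an element of $\ker \rho$ fixes every $\sim$-class setwise, but arbitrarily far-out translates of $B$ lie in a single ray (Lemma~\ref{4.5}), so such an element must have zero translation vector; hence $\ker \rho \le \gfin$, whence $G/\gfin \cong \Gamma/\Gamma_{\textit{fin}}$ and Lemma~\ref{lem:structureofkfin} applies. It is level: level-ness descends along $\rho$, by splitting on the all-or-nothing alternative of Definition~\ref{level}(ii) for $G$ and using Lemma~\ref{4.5} to see that all but finitely many $\sim$-classes lie in one ray. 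Finally, by maximality of $\{B\}$ the action of $\Gamma$ on $\Omega$ admits no proper non-trivial finite block system, since such a system would pull back to a finite $G$-block strictly larger than a translate of $B$. With these in hand: if $\Gamma$ is primitive, then Corollary~\ref{cameron2} gives $\Gamma \le_f H_2$; if $\Gamma$ is imprimitive, it has a proper non-trivial block, which must be infinite, hence $\Gamma_{\textit{fin}}$-invariant by Lemma~\ref{4.3}. Level-ness then forces $\Gamma_{\textit{fin}} = 1$: if $\Gamma_{\textit{fin}}$ were transitive on $\Omega$, the invariant block would be all of $\Omega$, contradicting properness; otherwise every point stabiliser of $\Gamma$ lies in $\Gamma_{\textit{fin}}$, and a nontrivial normal finitary subgroup of $\Gamma$ then produces, via its orbit structure (or, when that fails, via a canonical system of imprimitivity together with Theorem~\ref{cameron} and Theorem~\ref{prodalt}), either a proper finite $\Gamma$-block (contradicting the previous sentence) or a point stabiliser whose image in $\Gamma/\Gamma_{\textit{fin}} \cong \Z$ is a proper nonzero subgroup (contradicting level-ness). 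With $\Gamma_{\textit{fin}} = 1$ we get $\Gamma \cong \Z$, and a faithful transitive action of $\Z$ on the infinite set $\Omega$ is regular, so $\Gamma$ acts on $\Ra_2$ by translations.

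The main obstacle is precisely this imprimitive case: ruling out a nontrivial finitary part of $\Gamma$. The governing intuition is that a nontrivial normal finitary subgroup of $\Gamma$ is normalised by the cofinitely-supported infinite-order elements of $\Gamma$ and so is spread throughout $\Ra_2$, which forces its orbit structure (or a canonical block system of it) to descend to a $\Gamma$-invariant system; making this rigorous means separately treating the cases where the subgroup has finitely many infinite orbits, infinitely many finite orbits, and an infinite orbit, and handling the subtlety that the subgroup may itself be an imprimitive transitive finitary group. The subsidiary claim that $\Gamma$ inherits level-ness from $G$ likewise needs care, but everything else is a routine specialisation of machinery already built for Theorem~\ref{thm:structure}.
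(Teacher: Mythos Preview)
Your overall strategy is sound and close to the paper's, but you and the paper order the two key moves differently, and this difference is exactly where your write-up becomes muddled. The paper splits on the two cases of Definition~\ref{level}(ii) for $G$ \emph{first}, and only then chooses the block system: in the case $G_pG_{\textit{fin}}=G$ it takes a maximal finite block (getting $\Gamma$ primitive, hence of finite index by Corollary~\ref{cameron2}), while in the case $G_p\le G_{\textit{fin}}$ it takes the $G_{\textit{fin}}$-orbits themselves as blocks (getting $\Gamma\cong\Z$ directly). You instead build one maximal finite block system up front and then try to run the dichotomy on $\Gamma$; this works, but forces you to re-prove for $\Gamma$ the key fact that the paper uses for $G$.

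That key fact---which you do not state cleanly and which is what your ``main obstacle'' paragraph is circling---is a one-line observation: if $\Gamma_p\le\Gamma_{\textit{fin}}$ for all $p$, then every $\Gamma_{\textit{fin}}$-orbit is finite. Indeed, if $O$ were an infinite $\Gamma_{\textit{fin}}$-orbit, then (there being only finitely many) some infinite-order $h\in\Gamma$ preserves $O$ setwise; for $p\in O$ we have $ph\in O=p\,\Gamma_{\textit{fin}}$, so $ph=p\sigma$ for some $\sigma\in\Gamma_{\textit{fin}}$, and then $h\sigma^{-1}\in\Gamma_p$ has infinite order, contradicting $\Gamma_p\le\Gamma_{\textit{fin}}$. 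Once you have this, your imprimitive case collapses immediately: the $\Gamma_{\textit{fin}}$-orbits are finite $\Gamma$-blocks, hence singletons (since $\Gamma$ has no proper finite blocks), so $\Gamma_{\textit{fin}}=1$ and $\Gamma\cong\Z$. Your invocation of Theorems~\ref{cameron} and~\ref{prodalt} and the talk of ``canonical systems of imprimitivity'' is unnecessary and obscures this simple point; the case analysis you sketch in the final paragraph is not needed.
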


\begin{proof}
	We will use the results Theorem~\ref{kk} and Lemma~\ref{restricted}. 
	
	The case where $G_p\gfin = G$ for some (and hence any) $p \in \Ra_2$ implies that $\gfin$ acts transitively on $\Ra_2$. Just as in Proposition~\ref{6.1} (ii), this implies that there is a maximal finite block and $k=1$ since we are assuming the action is transitive. 
	
	We  can then use the block to produce a $G$-congruence $\sim$ and,  as in Proposition~\ref{ordering blocks}, 	$\Ra_2/\sim$ is order isomorphic to $\Ra_2$ and the image of $G$ in $\Sym(\Ra_2/\sim)$ is a subgroup of $H_2$. 
	
	We will also get Proposition~\ref{6.1} (iii), that the kernel of the map to $\Sym(\Ra_2/\sim)$ is a subgroup of $\gfin$, using the same argument. However, Proposition~\ref{6.1} (iv) can fail in general, since that argument uses the fact that we have a third ray.

	Nevertheless $\gfin$ acts transitively and $G$ acts primitively on $\Ra_2/\sim$ and hence the image of $G$ in $\Sym(\Ra_2/\sim)$ must contain the alternating group, by Theorem~\ref{cameron}. Since $G$ contains an element of infinite order which cannot lie in the kernel of the map to $\Sym(\Ra_2/\sim)$, this implies that the image of $G$ in $\Sym(\Ra_2/\sim)$  is actually a finite index subgroup of $H_2$. Hence by Theorem~\ref{kk} and Lemma~\ref{restricted}, we can embed $G$ into a wreath product $A \wr_{\Ra_2} \Gamma$, where $\Gamma$ is a finite index subgroup of $H_2$ and $A$ is a finite group. 
	
	\medspace
	
	The other situation is where $G_p \gfin = \gfin$, which implies that all point stabilisers are contained in $\gfin$. In this case, every $\gfin$ orbit must be finite (since otherwise some point stabiliser contains an element of infinite order). In this case we take our blocks to be exactly the $\gfin$ orbits. Since the $G$-action is transitive, there is only one orbit of blocks. As before, we can define a $G$-congruence, $\sim$ and a map from $G$ to $\Sym(\Ra_2/\sim)$ whose image lands in $H_2$.

	But now the kernel of the map to $\Sym(\Ra_2/\sim)$ is the whole of $\gfin$. This implies that the image of $G$ in $\Sym(\Ra_2/\sim)$ is both infinite cyclic and transitive. Hence, by Theorem~\ref{kk} and Proposition~\ref{ordering blocks}, $G$ embeds into a wreath product $A \wr_{\Ra_2} \Z$. Up to conjugation by an almost order preserving map (an element of $\FSym$, in fact) we can take the action of $\Z$ on $\Ra_2/\sim$ to be that of $\Z$ acting on $\Ra_2$ by translations. 
	
	Although we will not use it, observe that the image of $G$ in $A \wr_{\Ra_2} \Z$ is finite index, whereas the image of $G$ in $A \wr_{\Ra_2} \Gamma$ need not be. 
\end{proof}

\begin{proposition}
	\label{normal subgroups of wreaths in H_2}
	Consider the permutational restricted wreath product, $A \wr_{\Ra_n} \Gamma$, where $\Gamma$ is either a finite index subgroup of $H_n$ or $n=2$ and $\Gamma \cong \Z$ acting on $\Ra_2$ by translations and $A$ is some finite group.

	Let $\pi: A \wr_{\Ra_n} \Gamma \to \Gamma$ denote the natural projection map and let $G$ be a subgroup of $A \wr_{\Ra_n} \Gamma$ such that $\pi(G) = \Gamma$. Then 
	\begin{itemize}
		\item $G$ is finitely generated, and
		\item $G$ has \textbf{max-n}. 
	\end{itemize}
\end{proposition}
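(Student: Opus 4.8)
The plan is to split $W:=A\,\overline{wr}_{\Ra_n}\Gamma=\base\rtimes\Gamma$ into its base $\base=\bigoplus_{x\in\Ra_n}A_x$ (each $A_x$ a copy of $A$) and its head, and to reduce everything to the group $\Gamma$ together with a structural analysis of the base. Write $K:=G\cap\base$, so that $K\trianglelefteq G$ and $\pi$ induces an isomorphism $G/K\cong\Gamma$. First I would record that $\Gamma$ itself is finitely generated and has \textbf{max-n}: this is clear when $\Gamma\cong\Z$, and when $\Gamma\le_f H_n$ it holds because $H_n$ is finitely generated and because $\Gamma$ is an extension of a finite-index subgroup of $\Z^{n-1}$ (which has \textbf{max-n}) by $\Gamma\cap\FSym(\Ra_n)$, which is $\Alt(\Ra_n)$ or $\FSym(\Ra_n)$; hence any normal subgroup of $\Gamma$ lying inside $\FSym(\Ra_n)$ is one of $1,\Alt(\Ra_n),\FSym(\Ra_n)$, so an ascending chain of normal subgroups of $\Gamma$ has both its finitary part and its image in $\Z^{n-1}$ eventually constant, and is eventually constant. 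I would also record two elementary facts used throughout: $\Gamma$ acts transitively on $\Ra_n$ (Corollary~\ref{cameron2} when $\Gamma\le_f H_n$, and by construction when $\Gamma\cong\Z$); and conjugation in the restricted direct product $\base$ is coordinatewise, so that for any $A'\trianglelefteq A$ and any $g\in G$ one has $(A'_{x})^{g}=A'_{x\cdot\pi(g)}$ — in particular the $G$-conjugates of a coordinate subgroup $A'_{x_0}$ are exactly the coordinate subgroups $\{A'_x:x\in\Ra_n\}$.

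Granting these, both assertions reduce to one statement about the base: \emph{$(\ast)$ the $G$-invariant subgroups of $\base$ satisfy the ascending chain condition, and $K$ is contained in a finitely generated subgroup of $G$.} Indeed, from $(\ast)$ and \textbf{max-n} of $G/K\cong\Gamma$ one gets \textbf{max-n} of $G$ in the usual way: a chain $(N_i)$ of normal subgroups of $G$ has $(N_i\cap\base)$ stabilising by the ACC and $(N_i\base/\base)$ stabilising by \textbf{max-n} of $\Gamma$, whence $(N_i)$ stabilises. And $G$ is generated by a finitely generated subgroup $G_1\supseteq K$ together with lifts of a finite generating set of $\Gamma$, since such a subgroup contains $K$ and surjects onto $G/K$, hence equals $G$. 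Routing finite generation through ``$K\subseteq G_1$ finitely generated'' rather than through ``$K$ finitely normally generated'' is precisely what avoids the familiar circularity: in the proof of $(\ast)$ the conjugates that are needed are genuine coordinate subgroups, so conjugating the relevant generators by words in the lifts of generators of $\Gamma$ already reaches every coordinate piece.

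To prove $(\ast)$ I would induct on $|A|$ (the case $|A|=1$ being immediate, $W=\Gamma$), running in the course of it a Neumann-style analysis of $G$-invariant subgroups of $\base$ \cite{MR0401928}: since $A$ is finite and $\Gamma$ moves $\Ra_n$ by eventual translations along its $n$ rays, any such subgroup is determined up to finitely much data by its ``germ along each ray'', a datum living in the finite lattice of subgroups of $A$ — whence the ACC — while the translation dynamics then place $K$ inside a finitely generated subgroup of $G$. Two cases are clean and display the mechanism. When $A$ is abelian, $\base$ is the permutation $\Z\Gamma$-module $A\otimes_\Z\Z[\Ra_n]$ (the coordinatewise conjugation by $K$ being trivial), which is Noetherian: over $\Gamma$ the coefficient module $\F_p[\Ra_n]$ has submodule lattice $\{0,\ \text{augmentation kernel},\ \F_p[\Ra_n]\}$ when $\Gamma\supseteq\Alt(\Ra_n)$ (the kernel being $\Gamma$-simple since $\Alt(\Ra_n)$ is highly transitive) and is the Noetherian ring $\F_p[t,t^{-1}]$ when $\Gamma\cong\Z$; then $K$ is finitely generated as a module, and conjugating its module generators by lifts of generators of $\Gamma$ exhibits it inside a finitely generated subgroup of $G$. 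When instead $A$ has a \emph{nonabelian} minimal normal subgroup $A_0$, then $\bigoplus_x(A_0)_x$ has no diagonal normal subgroups, so $G\cap\bigoplus_x(A_0)_x=\bigoplus_x\bigl(G\cap(A_0)_x\bigr)$ is generated by its coordinate pieces, which are $G$-conjugate finite groups, and one passes to $A/A_0$. I expect the genuinely hard part — the main obstacle — to be the remaining case, where one must carry out the germ-along-the-rays argument for the \emph{subgroup} $G$ (not the full wreath product) in the presence of the nonabelian coordinatewise-conjugation twists coming from $K$, pinning each $G$-invariant subgroup of $\base$ down to its finitely many possible germs; once that is done, both halves of $(\ast)$ follow and the induction closes.
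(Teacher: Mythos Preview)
Your overall reduction to the statement $(\ast)$ is sound, and the endgame (passing from $(\ast)$ to finite generation and \textbf{max-n} of $G$) matches the paper's logic. The difficulty is entirely in proving $(\ast)$, and here your inductive scheme on $|A|$ has a genuine gap and is in any case left unfinished.

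The specific gap is in the nonabelian step. You write that since $\bigoplus_x (A_0)_x$ has no diagonal normal subgroups, $G\cap\bigoplus_x(A_0)_x=\bigoplus_x(G\cap(A_0)_x)$. But $G\cap\Phi_0$ is normal in $G$, not in $\Phi_0$: there is no reason for $\Phi_0$ to lie inside $G$, so the ``no diagonals'' fact about normal subgroups of a direct sum of nonabelian simples does not apply. Concretely, take $A=A_5$, $\Gamma=\Z$, and let $G=\langle t,d\rangle$ where $t$ generates $\Gamma$ and $d\in\Phi$ is supported on two adjacent coordinates with equal nontrivial entries; then $G\cap\Phi$ is genuinely diagonal and does not decompose as a coordinate sum. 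The same objection blocks the ACC half of $(\ast)$ in this case: a $G$-invariant subgroup of $\Phi_0$ is only normalised by $K=G\cap\Phi$, not by all of $\Phi_0$. Beyond this, you explicitly leave the ``remaining case'' (nonabelian $A$ with only abelian minimal normal subgroups, where the coordinatewise conjugation twists persist) to a germ-along-the-rays argument that is never carried out. So the proof as written does not close.

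The paper avoids all of this structure theory of $A$. Rather than inducting on $|A|$, it works directly with supports in $\Ra_n$ and exploits that $\Gamma$ acts either highly transitively (the $\Alt(\Ra_n)\le\Gamma$ case) or by translations (the $\Gamma\cong\Z$ case). Given a normal subgroup $N\le G\cap\Phi$, one fixes a finite set $F\subseteq N$ realising every value $\rho_*(N)\subseteq A$ at a basepoint, enlarges $F$ to all elements of $N$ supported in the finite set $S=\bigcup_{f\in F}\supp(f)$, and then runs a support-minimality argument: any $n\in N$ not in the normal closure of $F$ with minimal $|\supp(n)\setminus S|$ can have one coordinate cancelled by a suitable conjugate $f^g$ (using high transitivity to place $\supp(f^g)$ inside $\supp(n)$ while matching $n$ at one point), a contradiction. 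This shows every such $N$ is finitely normally generated in $G$, uniformly in $A$; finite generation of $G$ then follows from the observation that $F^{g}=F^{w}$ whenever $\pi(g)=\pi(w)$, since base elements preserve supports. The argument is shorter and does not require knowing anything about the internal structure of $A$.
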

\begin{proof} 
	
We will start the proof by showing that any normal subgroup $N$ of $G$ which is contained in $G \cap B $ is finitely normally generated in $G$.

	Let $B=\ker \pi$, the base of the wreath product. Then $B = \bigoplus_{\Ra_n} A$. Since this is a direct sum, every element of $B$ has a support which is a finite subset of $\Ra_n$. We also have the projection maps, $\rho_x: B \to A$ for every $x \in \Ra_n$.

	We deal with the case where $\Gamma$ is a finite index subgroup of $H_n$.

To do this, first pick a basepoint $* \in \Ra_n$ and consider a finite subset, $F_0$ of $N$ such that $\rho_*(F_0) = \rho_*(N)$. (This is clearly possible, as $A$ is finite). 
The crucial property we will use is that $\Gamma$ contains $\Alt(\Ra_n)$ and hence acts $k$-transitively on $\Ra_n$ for any $k \in \N$.

We let $S = \bigcup_{f \in F_0} \supp(f)$ be the finite subset of $\Ra_n$ consisting of the union of all the supports of elements of $F_0$. Then let $F$ denote the finite subset of $N$ whose supports lie in $S$. 

We note that since $\Gamma$ acts transitively on $\Ra_n$ and $\pi(G) = \Gamma$, we get that for any $x \in \Ra_n$ and any $n \in N$, there exists a $g \in G$ and an $ f \in F$ such that $\rho_x(f^g) = \rho_x(n)$. In fact, we could have taken the $f \in F_0$. However, do be aware that in general, $\rho_*(f) \neq \rho_x(f^g)$, even when $xg=*$, although they are conjugate in $A$ in that case. However, $|\rho_x(N)|$ is constant as we vary $x$, since $N$ is normal and $G$ acts transitively on $\Ra_n$. Therefore the elements $\rho_x(f^g)$ exhaust $\rho_x(N)$.  

We claim that $F$ normally generates $N$. We argue by contradiction. If this is not the case, choose some $n \in N$ outside of the normal closure of $F$ whose support is minimal. There are two cases to deal with. If the support of $n$ is not larger than $|S|$ then, since $\Gamma$ acts in a highly transitive way, there exists a $g \in G$ such that $n^g$ has support contained in $S$. Hence $n^g \in F$ or, equivalently, $n \in F^{g^{-1}}$, so that $n$ lies in the normal closure of $F$ resulting in a contradiction. Otherwise, the support of $n$ has at least $|S|$ elements. Choose some $x \in \Ra_n$ which lies in the support of $n$.

Now, since the action of $\Gamma$ is highly transitive, we can find a $g \in G$ and a $f \in F$ satisfying the following: 
\begin{itemize}
	\item $\rho_x(f^g) = \rho_x(n)$
	\item The support of $f^g$ is contained in the support of $n$. 
\end{itemize}
But now, as $n$ does not lie in the normal closure of $F$, neither does ${(f^g)}^{-1} n$. But this element has a smaller support by construction. So the resulting contradiction shows that $N$ is finitely normally generated. Thus we have proved that any normal subgroup of $B$ is finitely normally generated in the case where $\Gamma$ has finite index in $H_n$. 

\medspace

We next prove the same property when $\Gamma=\Z$ and $n=2$. The proof is similar but we need to modify some details as follows. The set $F_0$ is defined in the same way. Now, since $\Ra_2 = \{ 1,2\} \times \N$, we will identify $\Ra_2$ with $\Z$ (this is not an order preserving identification, but it doesn't matter for what follows). Then the action of $\Z$ on $\Ra_2$ is simply the regular action of $\Z$ on itself. Given any finite subset $S$ of $\Z$, we then assign an interval, $I(S):=[\inf(S),\sup(S)]_{\Z}$ and the length of $S$, $l(S)$ to be $|\sup(S) - \inf(S)|$. 

Note that for any $b \in B$, its support has a length and this length is a conjugacy invariant. 

Now let $L$ be the maximum of the lengths of elements of $F_0$ and extend $F_0$ to a larger finite set, $F$, such that any $n \in N$ of length $L$ is conjugate to an element of $F$. As before, we claim that $F$ normally generates $N$ in $G$. This follows as before. We argue by contradiction, choosing an $n \in N$ of minimal length which does not lie in the normal closure of $F$. By construction, as before, the length of $n$ must be greater than $L$ (as otherwise it would be equal to a conjugate of an element of $F$). Then, as before, we can find an $x$ in the support of $n$, a $g \in G$ and a $f \in F$ such that: 
\begin{itemize}
	\item $\rho_x(f^g) = \rho_x(n)$
	\item The interval of the support of $f^g$ is contained in the interval of the support of $n$. 
\end{itemize}
It is then clear that the length of the support of ${(f^g)}^{-1} n$ is smaller than the length of $n$; this contradiction proves that $F$ normally generates $N$.

\medspace

Thus we have shown in either case that any normal subgroup of $G \cap B$ is finitely normally generated in $G$. We now use this to get the remaining results.

\medspace

We next prove that $G$ is finitely generated. Since $G \cap B$ is a normal subgroup of $G$, it is normally generated by a finite set, $F$. Moreover by possibly enlarging $F$, we assert that there is a finite subset $S$ of $\Ra_n$ so that:
\begin{itemize}
	\item Any $f \in F$ has support contained in $S$ and, 
	\item Any element whose support is contained in $S$ is in $F$. 
\end{itemize}
That is, $F$ consists of exactly those (finitely many) elements whose support is contained in a fixed finite set, $S$. (We are using the fact that $A$ is finite here.)

Now since $\Gamma$ is finitely generated and $\pi(G)= \Gamma$ we can find a finite subset $X$ of $G$ such that $\pi(\langle X \rangle) = \Gamma$. That is, we take pre-images of a finite generating set. We claim that $G$ is generated by $X$ and $F$. Since $F$ normally generates $G \cap B$ and $\pi(\langle X \rangle) = \Gamma$, it is sufficient to show that $F^g \subseteq \langle X, F \rangle$ for all $g \in G$. Given such a $g$, there exists a $w \in \langle X \rangle$ such that $\pi(g)=\pi(w)$. But then the support of $F^{g w^{-1}}$ equals the support of $F$ (since $g w^{-1}$ acts trivially on $\Ra_n$). Hence $F^{g w^{-1}}  = F$, by the comments above (and the fact that $F$ is finite). In particular, $F^g = F^w \subseteq \langle X, F \rangle$. Hence $G$ is generated by $X$ and $F$. This shows that $G$ is finitely generated. 

\medspace

Finally, we prove that an arbitrary normal subgroup $N$ of $G$ is finitely normally generated. We have already proved that $N \cap B$ is finitely normally generated. And since every normal subgroup of $\Gamma$ is finitely normally generated, it follows that $NB/B \leq GB/B \cong \Gamma$ is also finitely normally generated, since $NB/B$ is a normal subgroup of $GB/B$. (Note when $\Gamma$ is a finite index subgroup of $H_n$, then a normal subgroup is either trivial, $\Alt, \FSym$ or finite index in $H_n$ and hence is finitely normally generated. When $\Gamma=\Z$, then every subgroup is finitely generated.)

It then follows that there is a finite subset $F_1$ of $N$ such that $F_1B$ normally generates $NB/B$ and a finite subset, $F_2$ of $N \cap B$ which normally generates $N \cap B$. Hence $N$ is finitely normally generated by $F_1 \cup F_2$. 
\end{proof}

\begin{theorem}
	\label{finite gen}
Let $n\ge2$. Then every subgroup of full Hirsch length in $H_n$ is finitely generated and has \textbf{max-n}. 
\end{theorem}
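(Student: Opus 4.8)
The plan is to reduce the statement to the transitive case and then feed the wreath-product structure into Proposition~\ref{finite gen for subdirect}. I would begin by isolating the one group-theoretic fact that makes the reduction legitimate: if $H \le_f G$ and $H$ has \textbf{max-n}, then $G$ has \textbf{max-n}. Indeed, given an ascending chain $(N_i)$ of normal subgroups of $G$, the chain $(N_i \cap H)$ consists of normal subgroups of $H$ and so stabilises, while $(N_i H)$ is an ascending chain of subgroups lying between $H$ and $G$ and so also stabilises (there are only finitely many such subgroups); the modular law then forces $N_i = N_d$ for $i$ large. Finite generation is likewise inherited by finite-index overgroups, so it suffices to prove the theorem for any group isomorphic to a finite-index subgroup of the given $G$. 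It is worth stressing at the outset that \textbf{max-n} is not a commensurability invariant in general, so the argument must only ever push \textbf{max-n} to finite-index \emph{overgroups}; this is possible precisely because the commensurabilities built in Section~\ref{sec:fullhirsch} realise the common subgroup as an honest finite-index subgroup of $G$.

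Now let $G \le H_n$ have full Hirsch length. By Lemma~\ref{commensurable} followed by Proposition~\ref{level and inforbs}, $G$ contains a finite-index subgroup isomorphic to a subgroup $K \le H_n$ which is \emph{level} and all of whose orbits on $\Ra_n$ are infinite; by Lemma~\ref{4.4} there are only finitely many such orbits, say $\mathcal{O}_1, \dots, \mathcal{O}_k$. By the previous paragraph it is enough to show that $K$ is finitely generated and has \textbf{max-n}. Since $K$ acts faithfully on $\Ra_n = \mathcal{O}_1 \sqcup \dots \sqcup \mathcal{O}_k$, Proposition~\ref{actions and subdirect} presents $K$ as a sub-direct product of $\Gamma_1 \times \dots \times \Gamma_k$, where $\Gamma_i$ is the restriction of $K$ to $\mathcal{O}_i$. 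Each $\mathcal{O}_i$ is order isomorphic to $\Ra_n$, and by Lemma~\ref{restricting} each $\Gamma_i$ is a transitive, level subgroup of the Houghton group on $\mathcal{O}_i$ of full Hirsch length.

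The next step is to verify that every factor $\Gamma_i$ is finitely generated and has \textbf{max-n}. For $n \ge 3$, Corollary~\ref{level and wreath} embeds $\Gamma_i$ into a restricted wreath product $A_i \overline{\wr}_{\Ra_n} \Delta_i$ with $A_i$ finite and $\Delta_i$ of finite index in $H_n$, and the construction of $\kappa$ (Theorem~\ref{kk}, Lemma~\ref{restricted}) makes the projection of the image onto $\Delta_i$ surjective; Proposition~\ref{normal subgroups of wreaths in H_2} then yields finite generation and \textbf{max-n} for $\Gamma_i$. For $n = 2$ the same conclusion follows from Proposition~\ref{transitive infinite n=2}, which puts $\Gamma_i$ inside such a wreath product with $\Delta_i$ of finite index in $H_2$ or $\Delta_i \cong \Z$ (again surjecting onto $\Delta_i$), followed once more by Proposition~\ref{normal subgroups of wreaths in H_2}. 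Finally, with every $\Gamma_i$ finitely generated and satisfying \textbf{max-n}, Proposition~\ref{finite gen for subdirect} shows that the sub-direct product $K$ is finitely generated and has \textbf{max-n}, and the reduction of the first two paragraphs carries this back to $G$.

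The substantive input is all imported: the structure theorem behind Corollary~\ref{level and wreath}, the wreath-product analysis of Proposition~\ref{normal subgroups of wreaths in H_2}, and Proposition~\ref{finite gen for subdirect}. What needs care in the assembly is, first, checking that Lemma~\ref{restricting} really supplies the exact hypotheses ("transitive, level, full Hirsch length") consumed by Corollary~\ref{level and wreath} and Proposition~\ref{transitive infinite n=2}, and second — the point I would flag as the main obstacle — keeping the \textbf{max-n} bookkeeping honest, since that property only travels up to finite-index overgroups and not down to finite-index subgroups.
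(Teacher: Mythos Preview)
Your proof is correct and follows essentially the same route as the paper: reduce to a level subgroup with only infinite orbits, realise it as a sub-direct product of its restrictions to the orbits (each transitive, level, of full Hirsch length), feed each factor through the wreath-product embedding and Proposition~\ref{normal subgroups of wreaths in H_2}, and conclude via Proposition~\ref{finite gen for subdirect}. The only difference is that the paper simply cites \cite[Exercise~3.1.11]{Robinson} for \textbf{max-n} being a commensurability invariant, whereas you prove the ``up to finite-index overgroups'' direction by hand and track that only this direction is needed---your extra caution is harmless and the argument goes through either way.
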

\begin{proof}
	Let $G$ be a subgroup of $H_n$ of full Hirsch length. Since both finite generation and \textbf{max-n } are commensurability invariants (see \cite[Exercise 3.1.11]{Robinson} for the latter) we can assume that $G$ is level and acts with finitely many orbits on $\Ra_n$, each of which is infinite, by Proposition~\ref{level and inforbs} and lemma~\ref{4.4}. We use these orbits to realise $G$ as a sub-direct product as in Proposition~\ref{actions and subdirect}.

	Therefore, as in Proposition~\ref{actions and subdirect}, $G$ is sub-direct in $\widehat{\Gamma_1} \times \cdots \times \widehat{\Gamma_k}$, where each $\widehat{\Gamma_i}$ is a level and transitive subgroup of $H_n$ of full Hirsch length by Lemma~\ref{restricting}. Next we invoke Proposition~\ref{transitive infinite n=2} for $n=2$ and	Corollary~\ref{level and wreath} for $n \ge 3$ to realise that each $\widehat{\Gamma_i}$ has a wreath product structure.

	Concretely, this implies that each $\widehat{\Gamma_i}$ is isomorphic to a subgroup of a permutational restricted wreath product, $A \wr_{\Ra_n} {\Gamma_i}$, where ${\Gamma_i}$ is either a primitive subgroup of $H_n$ or $n=2$ and ${\Gamma_i} \cong \Z$ acting on $\Ra_2$ by translations. (And, additionally, the natural projection map from $\widehat{\Gamma_i}$ to ${\Gamma_i}$ is surjective). In the former case,  ${\Gamma_i}$ is a finite index subgroup of $H_n$ by Corollary~\ref{cameron2}. (Actually, for $n \geq 3$, each $\widehat{\Gamma_i}$ is a finite index subgroup of the wreath product). 
	
	But by Proposition~\ref{normal subgroups of wreaths in H_2}, $\widehat{\Gamma_i}$ is finitely generated and has \textbf{max-n}. Hence $G$ is also finitely generated by Proposition~\ref{finite gen for subdirect} and has \textbf{max-n}.
\end{proof}

\section{Proving the Main Theorem}\label{sec:thmAproof}
From our Theorem \ref{thm:structure}, we know that a subgroup of full Hirsch length is abstractly commensurable to a multi-wreath product of a particular form. Our approach is to use the BNS Invariants to show that the head that occurs within such a wreath product is of type $\FFF_{n-1}$. We will then use \cite[Theorem A]{KM1} to deduce that the wreath product, and hence also the original group, is of type $\FFF_{n-1}$. With this approach in mind, we begin by recalling the necessary results for BNS Invariants. We direct a reader new to these ideas to \cite{BNSdefinitions}.

\begin{definition}
	Let $G$ be a finitely generated group and $m:=\dim_{\Q} G/[G,G] \otimes_{\Z} \Q$. Then $\Hom(G, \R) \cong \R^{m}$, a real vector space of dimension $m$. The \emph{character sphere of} $G$ is then
	$$S(G):=\Hom(G, \R) - \{ 0 \}/\sim,$$
	where $\sim$ is homothety (multiplication by a positive real). Hence, $S(G)$ is homeomorphic to a sphere of dimension $m-1$.  
\end{definition}

Let $H_n$ be the Houghton group on $n$ rays. We now summarise the key results from \cite{BNSRHoughton} on the BNS Invariants for $H_n$. This has abelianisation $\Z^{n-1}$. Hence $\Hom(H_n, \R) \cong \R^{n-1}$, meaning that the character sphere of $H_n$ is $S^{n-2}$. 

More concretely, we have $n$ translation maps, $t_1, \ldots, t_n$, which register the eventual translation on each respective ray. Then each $t_i$ is a $\Z$-valued function in $\Hom(H_n, \R)$. Moreover, $\sum_{i=1}^n t_i$ is the zero map. In fact, $t_1, \ldots, t_n$ span $\Hom(H_n, \R)$.

We can then form the $n-1$-simplex,
$$\Delta(t_1, \ldots, t_n) = \{ \sum_{i=1}^n a_i t_i \ : \ a_1,\ldots, a_n\ge 0, \sum_{i=1}^n a_i = 1\}$$
inside $\Hom(H_n, \R)$. The boundary of this $n-1$-simplex is homeomorphic to $S^{n-2}$, 
$$
S^{n-2} \cong \partial  \Delta(t_1, \ldots, t_n) = \{ \sum_{i=1}^n a_i t_i \ : \  a_1,\ldots, a_n\ge 0, \sum_{i=1}^n a_i = 1, \prod_{i=1}^n a_i = 0\}.
$$ 

From the above facts, that the $t_i$ span $\Hom(H_n, \R)$ and that $\sum_{i=1}^n t_i =0$, it is easy to see that $\partial  \Delta(t_1, \ldots, t_n)$ can be identified with the character sphere of $H_n$. More precisely, the natural quotient map from $\Hom(H_n, \R) - \{0 \}$ to the character sphere is a homeomorphism when restricted to $\partial  \Delta(t_1, \ldots, t_n)$. 

Note that this gives a triangulation of the character sphere of $H_n$. The BNS invariants for $H_n$ can then be described as follows: the complement of the $i^{th}$ homotopical BNS invariant is the $i-1$ skeleton of the character sphere with respect to the triangulation above. For instance, the first homotopical BNS invariant is the complement of the $0$-skeleton. Hence, by Theorem~\ref{thm:BNS} below, the kernel of each $\pm t_i$ fails to be finitely generated and these are the only points in the character sphere whose kernels fail to be finitely generated.

We next consider direct products, due to the head of our wreath product being subdirect in $\Gamma_1\times\cdots\times\Gamma_k$. As seen in \cite{BNSdirectprods}, the character sphere of $k$-fold direct product can be readily identified with the $k$-fold join of the character spheres of the direct summands. Moreover, we have the following inclusion (Meinert's Inequality):

$$
\Sigma^m(G^k)^c \subseteq \bigcup_{a_1+\cdots+a_k=m} \Sigma^{a_1}(G)^c* \ldots * \Sigma^{a_k}(G)^c,
$$ 
where $\Sigma^m$ denotes the $m^{th}$ homotopical BNS invariant. Note that this formula is only valid when $G$ (and hence $G^k$) is of type $\FFF_m$. In particular, when $G=H_n$, the character sphere of $G^k$ has an induced triangulation as the join of triangulated $S^{n-2}$ spheres. The character sphere of $(H_n)^k$ is a $k(n-1) -1$ sphere. 

Taking the formula when $m=n-1$ (which is the maximum value for which it is valid) and using the results above about the BNS invariants for Houghton groups, we get that $\Sigma^{n-1}({H_n}^k)^c$ is contained in the $n-2$-skeleton of the triangulation above of the  $k(n-1) -1$ sphere.

\begin{remark} The values above rely on the fact that the join of simplicial complexes of dimensions $n$ and $m$ results in a simplicial complex of dimension $n+m+1$. Hence $ \Sigma^{a_1}(G)^c* \ldots * \Sigma^{a_k}(G)^c$ has dimension $\sum_{i=1}^k \dim(\Sigma^{a_i}(G)^c) + (k-1)$. When $G=H_n$, we have that $\dim(\Sigma^{a_i}(G)^c) = a_i-1$ and if we set $m$ as $n-1$ we also get $a_1+\ldots +a_k = n-1$ and whence the result.
\end{remark}

We shall use the following result to determine if a subgroup of $G$ containing the derived subgroup of $G$ has some finiteness property.

\begin{theorem}[Theorem B, \cite{BNSdefinitions}]
\label{thm:BNS}	 Let $G$ be a finitely generated group of type $\FFF_k$ and $H$ a subgroup of $G$ which contains the derived subgroup of $G$. Then, 
\begin{enumerate}
	\item $S(G,H) = \{ [\chi] \in S(G) \ : \ \chi(H) = 0 \}$, is the great sphere of $H$ in $G$.
	\item For any $m \leq k$, $H$ is of type $\FFF_m$ if and only if $S(G,H)$ is contained in the $k^{th}$ sigma invariant of $G$; that is,
	$$
	H \text{ of type } \FFF_m \iff S(G,H) \subseteq \Sigma^m(G).
	$$
\end{enumerate}
\end{theorem}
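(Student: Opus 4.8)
The plan is to prove this as a special case of the theory of homotopical $\Sigma$-invariants, in the spirit of Bieri--Renz. Part (1) is essentially a matter of unwinding definitions. Since $H \supseteq [G,G]$, every $\chi \in \Hom(G,\R)$ already vanishes on $[G,G]$, so the condition $\chi(H)=0$ is exactly the condition that $\chi$ factors through the quotient $Q := G/H$. The set $V := \{\chi \in \Hom(G,\R) : \chi(H)=0\}$ is the annihilator of $H$, a linear subspace of $\Hom(G,\R)$ whose dimension equals the torsion-free rank of $Q$. Passing to the quotient by homothety, $S(G,H) = (V\setminus\{0\})/\!\sim$ is the intersection of $S(G)$ with the projectivisation of a linear subspace, which is by definition a great subsphere; this proves (1).

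For (2) I would first reduce to the case where $Q$ is free abelian. Writing $H_0$ for the preimage in $G$ of the torsion subgroup of $Q$, the index $[H_0:H]=|\mathrm{tors}(Q)|$ is finite, so $H$ is of type $\FFF_m$ if and only if $H_0$ is; moreover $S(G,H)=S(G,H_0)$, since characters kill torsion. Replacing $H$ by $H_0$ we may thus assume $Q=G/H\cong\Z^r$, where $r=\dim V$. I would then set up the standard geometric model: choose a $K(G,1)$ with finite $m$-skeleton and let $\widetilde X$ be the $m$-skeleton of its universal cover, an $(m-1)$-connected free $G$-CW-complex that is cocompact modulo $G$. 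A choice of basis of $V$ gives an equivariant height map $h=(h_1,\dots,h_r)\colon\widetilde X\to\R^r$ with $h(gx)=h(x)+\bar g$, where $\bar g$ denotes the image of $g$ in $Q\cong\Z^r$. Because every $\chi\in V$ vanishes on $H$, the map $h$ is $H$-invariant and descends to $Y:=\widetilde X/H$, on which $Q$ acts cocompactly covering the translation action on $\R^r$; in particular each sublevel set $h^{-1}(B_n)$ is an $H$-cocompact subcomplex of $\widetilde X$.

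The engine of the argument is the combination of two criteria. On the one hand, Brown's finiteness criterion identifies type $\FFF_m$ of $H$ with the \emph{essential $(m-1)$-connectivity} of the exhaustion of $\widetilde X$ by the $H$-cocompact subcomplexes $h^{-1}(B_n)$, where $B_n\subseteq\R^r$ is the ball of radius $n$: that is, there is a constant $\ell$ so that for all $n$ the inclusion $h^{-1}(B_n)\hookrightarrow h^{-1}(B_{n+\ell})$ is trivial on $\pi_i$ for every $i\le m-1$. On the other hand, the definition of the homotopical invariant says that $[\chi]\in\Sigma^m(G)$ exactly when the half-space filtration $\{h_\chi^{-1}([t,\infty))\}_t$ of $\widetilde X$ is essentially $(m-1)$-connected. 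The theorem is therefore equivalent to the purely geometric assertion that the ball exhaustion is essentially $(m-1)$-connected \emph{if and only if} the half-space filtration in direction $\chi$ is essentially $(m-1)$-connected for every $[\chi]\in S(G,H)$.

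This last equivalence is the heart of the matter and the step I expect to be the main obstacle. The forward implication is the easier one: restricting the ball exhaustion to a single coordinate direction recovers, up to bounded error, the half-space filtration for that $\chi$, so essential connectivity of the balls forces $S(G,H)\subseteq\Sigma^m(G)$. The converse is where the real work lies. Here one writes each ball $B_n$ as a finite intersection of half-spaces, uses the fact that $\Sigma^m(G)$ is an \emph{open} subset of $S(G)$ together with the compactness of the great subsphere $S(G,H)$ to extract a single connectivity constant $\lambda$ valid for all directions simultaneously, and then fills a given $i$-cycle in $h^{-1}(B_n)$ inside a larger ball by an iterated ``pushing to infinity'' argument: one repeatedly applies the single-direction null-homotopies to move the cycle outward across the boundary sphere of directions, gluing the partial homotopies by a Mayer--Vietoris/nerve argument over a finite open cover of $S^{r-1}$. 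Carrying out this filling carefully, and controlling how $\lambda$ and the radius increment $\ell$ depend on $r$ and $m$, is the genuinely delicate part. Once it is in place, combining it with Brown's criterion and the definition of $\Sigma^m$ yields the biconditional $H\text{ of type }\FFF_m\iff S(G,H)\subseteq\Sigma^m(G)$, and together with the torsion reduction this completes the proof.
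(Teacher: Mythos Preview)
The paper does not prove this statement at all: it is quoted verbatim as Theorem~B of Bieri--Renz \cite{BNSdefinitions} and used as a black box in the subsequent argument. There is therefore no ``paper's own proof'' to compare against.

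That said, your outline is a faithful sketch of the Bieri--Renz strategy. Part~(1) is indeed just unwinding definitions. For part~(2), the reduction to torsion-free $Q$, the translation of type~$\FFF_m$ into essential $(m-1)$-connectivity of an $H$-cocompact filtration via Brown's criterion, and the identification of $\Sigma^m$ with essential connectivity of half-space filtrations are all correct and standard. You correctly identify the hard direction as assembling single-direction fillings into a ball filling, and you correctly flag openness of $\Sigma^m$ and compactness of $S(G,H)$ as the ingredients for obtaining a uniform constant. What you have written is an accurate road map rather than a proof: the ``iterated pushing'' and the gluing over a finite cover of the sphere are precisely where the technical work of Bieri--Renz lies, and you have not carried it out. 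If you were writing this up independently you would need either to execute that step in full or, as the present paper does, simply cite \cite{BNSdefinitions}.
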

The following well-known lemma will be helpful for the theorem that follows.
\begin{lemma}\label{lem:indexresult} Let $G\le H$ and $N\unlhd H$. Then $G$ has finite index in $GN$ if and only if $G\cap N$ has finite index in $N$.
\end{lemma}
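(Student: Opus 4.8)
The plan is to use the standard correspondence between double cosets and coset spaces, together with the second isomorphism theorem applied inside $H$. Observe that since $N \unlhd H$, the product $GN$ is a subgroup of $H$ containing both $G$ and $N$. The quotient $GN/N$ makes sense, and the second isomorphism theorem gives a canonical isomorphism $GN/N \cong G/(G\cap N)$. The key point is that the index $[GN:G]$ and the index $[N : G\cap N]$ can each be read off from essentially the same combinatorial data: the cosets of $G$ inside $GN$.

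First I would note that $[GN : G] = [N : G \cap N]$ as a bald equality of cardinals, not merely an equivalence of finiteness. To see this, consider the action of $N$ by right multiplication on the coset space $G \backslash GN$. Since every element of $GN$ is of the form $gn$ with $g \in G$, $n \in N$, every coset $Gx$ with $x \in GN$ equals $Gn$ for some $n \in N$, so $N$ acts transitively on $G\backslash GN$. The stabiliser of the coset $G = G\cdot 1$ under this action is $\{n \in N : Gn = G\} = N \cap G$. By the orbit–stabiliser theorem, $|G\backslash GN| = [N : G\cap N]$, and the left-hand side is by definition $[GN:G]$. Hence $[GN:G] = [N : G\cap N]$.

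Given this equality of cardinals, the lemma is immediate: $[GN:G]$ is finite if and only if $[N:G\cap N]$ is finite, which is exactly the assertion. I do not anticipate any genuine obstacle here; the only mild subtlety is making sure one invokes $N \unlhd H$ to guarantee $GN$ is a subgroup (so that the indices in question are defined in the first place), and being careful that $G \cap N$ is the stabiliser of the base coset rather than something larger. Alternatively, one could phrase the whole argument via the isomorphism $GN/N \cong G/(G\cap N)$ and the multiplicativity of indices, $[GN:G]\,[G : G\cap N] = [GN : G\cap N] = [GN:N]\,[N:G\cap N]$ combined with $[GN:N] = [G : G\cap N]$, but the orbit–stabiliser argument is cleaner and avoids having to assume any of the indices involved are finite a priori.
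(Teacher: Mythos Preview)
Your proof is correct and follows essentially the same approach as the paper: both argue via the transitive action of $N$ by right multiplication on the coset space $G\backslash GN$, identify the stabiliser of the base coset as $G\cap N$, and conclude $[GN:G]=[N:G\cap N]$ by orbit--stabiliser. You supply slightly more detail (why $GN$ is a subgroup, why the action is transitive) and sketch an alternative via multiplicativity of indices, but the core argument is identical.
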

\begin{proof} Let $X$ be the set of (right) cosets of $G$ in $GN$. Then $|X|$ is the index of $G$ in $GN$. However, $N$ also acts transitively on $X$ by right multiplication, and the stabiliser in $N$ of $G$ is $G \cap N$. Hence also $|X|$ is the index of $G \cap N$ in $N$.
\end{proof}
We now summarise the key properties that hold with our setup, and use these to show that a subgroup of $H_n$ of full Hirsch length is of type $\FFF_{n-1}$.
\begin{theorem}
	Let $n \geq 3$ and $G\le H_n$ be of full Hirsch length with $k$ orbits, $\Omega_1, \ldots, \Omega_k$, each of which is infinite. Suppose further that $G$ is a level subgroup and acts strongly orbit primitively on the ray system. Then: 
	\begin{enumerate}
		\item Each $\Omega_i$ is order isomorphic (as a well ordered set) to the ray system, $\Ra_n$.
		\item $G$ embeds as a subgroup of $\Gamma_1 \times \cdots \times\Gamma_k$, where $\Gamma_i$ is the full Houghton group on $\Omega_i$. 
		\item $G$ contains a subgroup commensurate to the derived subgroup of  $\Gamma_1 \times \cdots\times \Gamma_k$.
		\item For each projection map, $\pi_i: \Gamma_1 \times \ldots\times \Gamma_k \to \Gamma_i$ we have that $\pi_i(G)$ is a finite index subgroup of $\Gamma_i$.
	\end{enumerate}
	
	Moreover, $G$ is of type $\FFF_{n-1}$. 
\end{theorem}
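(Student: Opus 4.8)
Here is how I would prove that $G$ is of type $\FFF_{n-1}$.

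The plan is to transfer the question to the BNSR $\Sigma$-invariants of the direct product $\widehat\Gamma := \Gamma_1\times\cdots\times\Gamma_k$, into which $G$ embeds by~(2). By~(1) each $\Gamma_i$ is a copy of $H_n$, so $\widehat\Gamma\cong(H_n)^k$ is finitely generated and of type $\FFF_{n-1}$ (Brown's theorem together with closure of $\FFF_{n-1}$ under finite direct products). Writing $\widehat\Gamma' = [\widehat\Gamma,\widehat\Gamma] = \prod_i\FSym(\Omega_i)$ (using $[H_n,H_n]=\FSym(\Ra_n)$), hypothesis~(3) gives that $G\cap\widehat\Gamma'$ has finite index in $\widehat\Gamma'$, so by Lemma~\ref{lem:indexresult} the subgroup $G^+ := G\,\widehat\Gamma'$ contains $G$ with finite index. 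As type $\FFF_{n-1}$ is inherited by finite-index subgroups, it suffices to prove $G^+$ is of type $\FFF_{n-1}$; and since $\widehat\Gamma'\le G^+\le\widehat\Gamma$, [Theorem B, \cite{BNSdefinitions}] reduces this to showing that the great sphere $S(\widehat\Gamma,G^+)$ lies in $\Sigma^{n-1}(\widehat\Gamma)$.

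The next step is to pin down $S(\widehat\Gamma,G^+)$. Any character of $\widehat\Gamma$ kills $\widehat\Gamma'$, so $S(\widehat\Gamma,G^+)=\{[\chi]:\chi|_G=0\}$, and $\chi|_G$ factors through the map $G\to\bigoplus_i\Gamma_i/[\Gamma_i,\Gamma_i]$. The crucial structural point is that the restriction map $G\to\Gamma_i=H_n(\Omega_i)$ acts on translation vectors by multiplication by a \emph{ray-independent} positive rational $\delta_i$: because $\Omega_i$ meets every ray in an infinite set (Lemma~\ref{inforbs}) and $\operatorname{res}_{\Omega_i}(g)\in H_n(\Omega_i)$ (Lemma~\ref{restricting}), the intersection of $\Omega_i$ with each ray is eventually periodic, so on ray $m$ the restriction has translation component $\delta_{i,m}\,t_m(g)$; the fact that this translation vector is zero-sum, combined with $G$ having full Hirsch length (so its translation lattice spans the whole zero-sum hyperplane), forces $\delta_{i,m}$ to be independent of $m$. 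Hence the image of $G$ in $\bigl(\bigoplus_i\Gamma_i/[\Gamma_i,\Gamma_i]\bigr)\otimes\Q$ is the rescaled diagonal $\{(\delta_1 v,\dots,\delta_k v):v\in\Q^{n-1}\}$ (of dimension $n-1$, in agreement with $h(G)=n-1$ and~(4)), and identifying each $\Hom(\Gamma_i,\R)$ with $\Hom(H_n,\R)$ we obtain
$$S(\widehat\Gamma,G^+)=\bigl\{\,[(\chi_1,\dots,\chi_k)]\in S(\widehat\Gamma)\ :\ \textstyle\sum_{i=1}^k\chi_i=0\,\bigr\},$$
a round subsphere of dimension $(k-1)(n-1)-1$.

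It remains to verify that this sphere avoids $\Sigma^{n-1}(\widehat\Gamma)^c$. By Meinert's inequality and the description of the $\Sigma$-invariants of $H_n$ recorded above, $\Sigma^{n-1}(\widehat\Gamma)^c$ is contained in the $(n-2)$-skeleton of the join triangulation $S(\widehat\Gamma)=\bigast_{i=1}^k\partial\Delta(t_1,\dots,t_n)$, so it is enough to show that no non-zero $\chi=(\chi_1,\dots,\chi_k)$ with $\sum_i\chi_i=0$ lies in that skeleton. Realising $S(H_n)$ as $\partial\operatorname{conv}(t_1,\dots,t_n)$ — a genuine $(n-1)$-simplex boundary, since $t_1,\dots,t_n$ are affinely independent with the origin as barycentre — the carrier of $[\chi_i]$ is the face spanned by $\{t_s:s\in I_i\}$ for some $I_i\subsetneq\{1,\dots,n\}$, with $\chi_i$ a strictly positive combination of those vertices; membership of $[\chi]$ in the $(n-2)$-skeleton amounts to $\sum_{i\in J}|I_i|\le n-1$, where $J=\{i:\chi_i\ne0\}$. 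But then $0=\sum_{i\in J}\chi_i$ expresses a strictly positive combination of the $t_s$ with $s\in\bigcup_{i\in J}I_i$ as zero, and the only linear relation among $t_1,\dots,t_n$ is $\sum_{s=1}^n t_s=0$ (all coefficients equal); such a relation cannot have a strictly positive coefficient on one $t_s$ and a zero coefficient on another unless $\bigcup_{i\in J}I_i=\{1,\dots,n\}$, which would force $n\le\sum_{i\in J}|I_i|\le n-1$ — a contradiction. Therefore $S(\widehat\Gamma,G^+)\subseteq\Sigma^{n-1}(\widehat\Gamma)$, so $G^+$, and hence $G$, is of type $\FFF_{n-1}$. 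The delicate points are the last paragraph's bookkeeping with the join triangulation and carriers, and — logically prior to it — the structural claim that restriction to an orbit scales translation vectors by a single ray-independent factor, which is precisely what fixes the great sphere in the ``diagonal'' position that makes the relation-counting work.
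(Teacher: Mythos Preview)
Your argument is correct and follows the same overall strategy as the paper: reduce to the BNSR invariants of $\widehat\Gamma \cong (H_n)^k$ via the great sphere of $G\widehat\Gamma'$, and use Meinert's inequality to bound $\Sigma^{n-1}(\widehat\Gamma)^c$ by the $(n-2)$-skeleton of the join triangulation. The one difference is in the final step. The paper uses only the sign relationship $\operatorname{sgn}(t_{ij}(g)) = \operatorname{sgn}(t_i(g))$ together with the level hypothesis to directly exhibit, for any $\chi$ in that skeleton, an element $g \in G$ with $\chi(g) \ne 0$; you instead establish the stronger ray-independent scaling $t_{ij} = \delta_j\, t_i$ (which the paper records parenthetically but does not use) so as to pin down the great sphere exactly, and then run a vertex-counting argument against the unique relation $\sum_s t_s = 0$. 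One small slip: the condition cutting out $S(\widehat\Gamma, G^+)$ should read $\sum_i \delta_i \chi_i = 0$ rather than $\sum_i \chi_i = 0$, but since the $\delta_i$ are positive this does not affect your counting argument.
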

\begin{proof}
	We first observe that $G\le \Sym(\Omega_1) \times \ldots\times \Sym(\Omega_k)$. Then Lemma \ref{restricting}, which builds on Lemma~\ref{inforbs}, provides (i) and  (ii).  For (iv), note that each $\pi_i(G)$ must act primitively on $\Omega_i$ and hence, by Corollary~\ref{cameron2}, is a finite index subgroup of $\Gamma_i$. Lemma \ref{lem:finitaryonallorbits} states the existence of an element satisfying assumption (ii) of Theorem~\ref{prodalt}, from which our statement (iii) above, follows.
	 	 
	We now wish to show that $G$ is of type $\FFF_{n-1}$. Consider the translation homomorphisms, $t_1, \ldots, t_n$ from $H_n$ (and hence also $G$) to $\Z$ based on the original ray system. We then have, for each $i$, translation maps $t_{ij}$, each of which measures the eventual translation along the $i$th ray in $\Omega_j$. These are related in the following way for all $g \in G$: 
	
	\begin{itemize}
		\item If $t_i(g)=0$, then $t_{ij}(g) = 0$ for all $j$. 
		\item If $t_i(g) >0$, then $t_{ij}(g) > 0$ for all $j$. 
		\item If $t_i(g) <0$, then $t_{ij}(g) < 0$ for all $j$.  
	\end{itemize}
	 
This is clear, but do note that the numbers themselves need not be equal. (However, there will exist a $0 < q_j \in \Q$ such that $t_{ij}(g) = q_j t_i$ for all $g \in G$.) From this observation, any element of the character sphere of $\Gamma_1 \times \cdots \times\Gamma_k$ can be represented by some 
$$
\chi = \sum_{i,j} a_{ij} t_{ij}, 
$$
where $0 \leq a_{ij} \in \R$. (We can also impose the condition that $\sum a_{ij} = 1$, but this will not be needed.)

Our condition (iii) states that the derived subgroup of $G$ has finite index in $N=\bigoplus_{i}\FSym(\Omega_i)$, which is the derived subgroup of $\Gamma_1 \times \cdots \times\Gamma_k$. Thus Lemma \ref{lem:indexresult} applies, and $G$ has finite index in $GN$. Up to commensurability, therefore, we may assume that $G$ contains $N$. We can make such a change since, for any $m\ge 2$, the finiteness conditions $\FFF_m$ and $\FP_m$ are commensurability invariants; see \cite[Corollary 9]{Alonso}. We now have that $G$ will be of type $\FFF_{n-1}$ as long as the great sphere of $G$ in $\Gamma_1 \times \cdots \times \Gamma_k$ is contained in the $n-1$ sigma invariant of $\Gamma_1 \times \cdots \times \Gamma_k$.

Thus if $G$ fails to be of type $\FFF_{n-1}$, then there exists a $\chi = \sum_{i,j} a_{ij} t_{ij}$ such that $\chi(G) = 0$ and at most  $n-1$ of the $a_{ij}$ are non-zero (while all the rest are zero, and all of them are non-negative). That is, if $G$ fails to be of type $\FFF_{n-1}$ then it is contained in the kernel of some homomorphism to $\R$ which is in the $n-2$ skeleton of the triangulation given above.

However, given such a $\chi$, there must exist a ray, some $1 \leq i_0 \leq n$ such that $a_{i_0 j} = 0 $ for all $j$. We must also have some coefficient, $a_{i_1 j_1} > 0$.

Since $G$ is level, we can find a $g \in G$ such  $t_{i_0}(g) < 0$, $t_{i_1}(g) > 0$ and $t_i(g) = 0 $ for $i \neq i_0, i_1$. But then $t_{i_1 j}(g) > 0 $ for all $j$ and $t_{ij}(g) = 0$ for $i \neq i_0, i_1$. Now, since $a_{i_0 j} = 0 $ for all $j$, we get that $\chi(g) > 0$. This is a contradiction and hence we deduce that $G$ is of type $\FFF_{n-1}$. 
\end{proof}
Our aim is now to apply \cite[Theorem A]{KM1}. This provides 3 conditions which are together sufficient for the graph wreath product to have the finiteness condition $\FFF_n$.
\begin{enumerate}
\item The head, $H$, is of type $\FFF_n$.
\item The group we are wreathing over, $A$, is of type $\FFF_n$.
\item $\Z\Delta_p$ is of type $\FP_{n-1-p}$ over $\Z H$ for $0\le p\le n-1$.
\end{enumerate}
Note that we have just shown condition (i). Condition (ii) is immediate since in our setting these groups are finite. Condition (iii) is reformulated in \cite[Lemma 1.8]{KM1} as follows.
\begin{lemma}[\cite{KM1}, Lemma 1.8] \label{graphwreathresult}
Let $H$ be a group and let $\Gamma$ be a non-empty simple $H$-graph. Let $L$ be the flag complex spanned by $\Gamma$. Let $m$ and $n$ be non-negative integers. Let $\Delta_m$ denote the set of $m$-simplices of $L$ and let $L^m$ denote the $m$-skeleton of $L$, for $m \ge 0$. Then the action of $H$ extends naturally to $L$ and the following are equivalent:
\begin{enumerate}
\item $\Z\Delta_m$ is of type $\FP_n$ as a $\Z H$-module.
\item $H$ has finitely many orbits of $(m+1)$-cliques, and the stabiliser of each such clique is of type $\FP_n$.
\end{enumerate}
\end{lemma}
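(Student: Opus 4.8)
The plan is to recognise $\Z\Delta_m$ as a permutation $\Z H$-module and then reduce the statement to two standard facts about modules of type $\FP_n$. First, note that by construction of the flag complex $L$ spanned by $\Gamma$, a set of $m+1$ vertices spans an $m$-simplex of $L$ precisely when those vertices are pairwise adjacent in $\Gamma$, i.e.\ form an $(m+1)$-clique; since $H$ acts on $\Gamma$ by graph automorphisms, the induced action on $L$ restricts on $\Delta_m$ to the $H$-action on the set of $(m+1)$-cliques. Hence $\Z\Delta_m$, the free abelian group on $\Delta_m$, is the $\Z H$-permutation module on the set of $(m+1)$-cliques of $\Gamma$. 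Choosing a set of representatives $\{C_\lambda : \lambda\in\Lambda\}$ for the $H$-orbits on $\Delta_m$ and writing $H_\lambda=\stab_H(C_\lambda)$, one obtains a $\Z H$-module isomorphism $\Z\Delta_m\cong\bigoplus_{\lambda\in\Lambda}\Z[H/H_\lambda]$, where $\Z[H/H_\lambda]\cong\Z\otimes_{\Z H_\lambda}\Z H$ is the module induced from the trivial $\Z H_\lambda$-module. (If one works with oriented simplices instead, the only change is that some summands are induced from a sign character of $H_\lambda$, which affects nothing below.)

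Next I would invoke two standard inputs. (a) For a subgroup $K\le H$, the induced module $\Z[H/K]$ is of type $\FP_n$ over $\Z H$ if and only if $K$ is of type $\FP_n$: inducing up a partial projective resolution of $\Z$ over $\Z K$ that is finitely generated in degrees $\le n$ along the functor $-\otimes_{\Z K}\Z H$ --- which is exact, since $\Z H$ is free over $\Z K$, and sends finitely generated projectives to finitely generated projectives --- gives one implication, and the other is the Eckmann--Shapiro argument. (b) A direct sum $\bigoplus_{\lambda}M_\lambda$ of $R$-modules is finitely generated if and only if all but finitely many $M_\lambda$ vanish and the remaining ones are finitely generated, and a \emph{finite} direct sum of $R$-modules is of type $\FP_n$ if and only if each summand is (type $\FP_n$ is inherited by direct summands and closed under finite direct sums). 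Applying (b) to the decomposition above, $\Z\Delta_m$ is of type $\FP_n$ over $\Z H$ exactly when $\Lambda$ is finite and each $\Z[H/H_\lambda]$ is of type $\FP_n$; by (a) this says precisely that $H$ has finitely many orbits of $(m+1)$-cliques and that each stabiliser $H_\lambda$ is of type $\FP_n$. That is condition (2), and the equivalence is established.

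The only genuinely delicate point is the converse half of (a): that $\Z[H/K]$ being of type $\FP_n$ over $\Z H$ forces $K$ to be of type $\FP_n$. Restriction from $\Z H$ to $\Z K$ does not preserve type $\FP_n$ in general (when $[H:K]=\infty$, $\Z H$ is free but of infinite rank over $\Z K$), so one cannot argue by merely restricting. Instead one combines the Shapiro isomorphisms $\mathrm{Ext}^{\ast}_{\Z H}(\Z[H/K],-)\cong\mathrm{Ext}^{\ast}_{\Z K}(\Z,\mathrm{Res}^{H}_{K}(-))$ with Bieri's characterisation of type $\FP_n$ via the commutation of cohomology with directed colimits of coefficient modules (equivalently, the dual statement for $\mathrm{Tor}$ and direct products). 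This is entirely standard and is carried out in Bieri's notes; with that in hand the rest is bookkeeping, since $\Gamma$, $L$, and the $H$-action here are elementary combinatorial data.
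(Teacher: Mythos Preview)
Your argument is correct and is the standard one. Note, however, that the paper does not prove this lemma: it is quoted verbatim from \cite{KM1} (Lemma~1.8 there) and used as a black box in the proof of Proposition~\ref{Type Fn}. So there is no ``paper's own proof'' to compare against here. Your approach---identifying $\Z\Delta_m$ with the permutation module $\bigoplus_\lambda \Z[H/H_\lambda]$ on orbit representatives of $(m+1)$-cliques, then invoking (a) that $\Z[H/K]$ has type $\FP_n$ over $\Z H$ iff $K$ has type $\FP_n$, and (b) that a direct sum has type $\FP_n$ iff it is finite and each summand does---is exactly how such statements are proved, and is presumably what appears in \cite{KM1}. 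Your care over the converse in (a), using Shapiro together with Bieri's direct-limit criterion rather than a naive restriction argument, is the right way to handle that point.
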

We note that condition (ii) of the preceding lemma applies in our context. The following implies Theorem \ref{main}.
\begin{proposition} \label{Type Fn}  Let $n\ge 3$ and $G\le H_n$ have $h(G)=n-1$. Then $G$ is of type $\FFF_{n-1}$.
\end{proposition}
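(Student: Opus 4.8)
The plan is to prove, by induction on $m\in\{1,\dots,n-1\}$, the statement $P(m)$: every subgroup of $H_n$ of full Hirsch length is of type $\FFF_m$; taking $m=n-1$ then yields the Proposition. The base case $P(1)$ is Theorem~\ref{finite gen}. For the inductive step I fix $m$ with $2\le m\le n-1$, assume $P(m-1)$, and let $G\le H_n$ with $h(G)=n-1$. By Theorem~\ref{thm:structure}, $G$ is abstractly commensurable to a restricted multi-wreath product $\mathcal{W}\overline{\wr}\Gamma$ in which each $W_i$ is finite, $\Gamma\le H_n$ has full Hirsch length, and $\Gamma$ acts strongly orbit primitively with only infinite orbits $\Omega_1,\dots,\Omega_k$; tracing through the construction one checks that $\Gamma$ is moreover level, this being inherited from $G$ under the quotient by a finitary normal subgroup. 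Since type $\FFF_m$ is a commensurability invariant for $m\ge2$ by \cite[Corollary~9]{Alonso}, it suffices to show $\mathcal{W}\overline{\wr}\Gamma$ has type $\FFF_m$.

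To do this I would regard $\mathcal{W}\overline{\wr}\Gamma$ as the graph wreath product of the finite groups $W_i$ over the $\Gamma$-graph with vertex set $\Omega=\Omega_1\sqcup\dots\sqcup\Omega_k$ and \emph{no edges}: then the associated flag complex is just the discrete set $\Omega$, so $\Delta_0=\Omega$ while $\Delta_p=\emptyset$ for $p\ge1$, and I apply \cite[Theorem~A]{KM1} with its parameter equal to $m$. The first hypothesis, that the head $\Gamma$ has type $\FFF_m$, follows from the preceding Theorem, which gives $\Gamma$ type $\FFF_{n-1}$. The second, that the vertex groups have type $\FFF_m$, is immediate since the $W_i$ are finite. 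The third asks that $\Z\Delta_p$ have type $\FP_{m-1-p}$ over $\Z\Gamma$ for $0\le p\le m-1$; this is vacuous when $p\ge1$, and for $p=0$ Lemma~\ref{graphwreathresult} reduces it to the two assertions that $\Gamma$ has only finitely many orbits of vertices --- true, the orbits being $\Omega_1,\dots,\Omega_k$ --- and that every point stabiliser $\Gamma_\omega$ has type $\FP_{m-1}$.

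The substantive point, and where the induction is fed back on itself, is this last assertion about point stabilisers. Here I would argue that $\Gamma_\omega$ is again a full Hirsch length subgroup of $H_n$. By Corollary~\ref{prodalt for houghton}, $\Gamma$ contains $\Alt(\Omega_1)\times\dots\times\Alt(\Omega_k)$, which lies in $\Gamma_{\textit{fin}}$ and acts transitively on each $\Omega_i$; hence $\Gamma=\Gamma_{\textit{fin}}\Gamma_\omega$, so $\Gamma_\omega/(\Gamma_\omega)_{\textit{fin}}\cong\Gamma/\Gamma_{\textit{fin}}\cong\Z^{n-1}$. Since $\Gamma_\omega$ fixes $\omega$ it acts faithfully on the ray system with $\omega$ deleted, which is again order isomorphic to $\Ra_n$ (Lemma~\ref{lem:orderiso1}), so $\Gamma_\omega$ is isomorphic to a subgroup of $H_n$ (Proposition~\ref{aopisHou}), and by the displayed isomorphism together with Lemma~\ref{lem:structureofkfin} it has full Hirsch length $n-1$. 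The inductive hypothesis $P(m-1)$ then gives that $\Gamma_\omega$ has type $\FFF_{m-1}$, in particular type $\FP_{m-1}$. All three hypotheses of \cite[Theorem~A]{KM1} are then in place, so $\mathcal{W}\overline{\wr}\Gamma$, and hence $G$, has type $\FFF_m$; this completes the induction.

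The hard part is exactly the self-reference just used: there is no way to handle $\Gamma_\omega$ through the already-established case of level, strongly orbit primitive subgroups, because passing to a point stabiliser gives no control over whether the resulting action remains level or strongly orbit primitive, so the only tool available is the Proposition itself one finiteness degree lower --- and this is what forces the induction on $m$, with finite generation (Theorem~\ref{finite gen}) sitting at the bottom of the descent. I would also want to take care over the bookkeeping of homological degrees when specialising \cite[Theorem~A]{KM1} and Lemma~\ref{graphwreathresult} to the edgeless graph, over the fact that the graph-wreath-product machinery must allow distinct finite vertex groups on distinct $\Gamma$-orbits as the multi-wreath product requires, and over checking that the head $\Gamma$ may indeed be taken to be level so that the preceding Theorem applies to it.
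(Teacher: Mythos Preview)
Your proposal contains a genuine error in the choice of graph. The restricted multi-wreath product $\mathcal{W}\overline{\wr}\Gamma$ has base $\bigoplus_{\omega\in\Omega}W_\omega$, in which \emph{all} factors commute. In the graph-product convention used in \cite{KM1}, edges encode commutation, so to realise this base one must take the \emph{complete} graph on $\Omega$, not the edgeless one. With no edges the base of the graph wreath product is the free product $\ast_{\omega}W_\omega$, which is a completely different group; proving that this free-product wreath product has type $\FFF_m$ says nothing about $\mathcal{W}\overline{\wr}\Gamma$, and hence nothing about $G$. (The paper itself takes the graph with edges inside each orbit $\Omega_d$, producing the cliques it then analyses.)

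Once the correct graph is used, condition~(iii) requires one to control the stabilisers in $\Gamma$ of \emph{all} finite cliques $S\subset\Omega$, not just of single vertices, and this is where your inductive scheme becomes unnecessary. Your worry that ``passing to a point stabiliser gives no control over whether the resulting action remains level or strongly orbit primitive'' is unfounded. By Corollary~\ref{prodalt for houghton}, $\Gamma$ contains $\prod_i\Alt(\Omega_i)$; hence the pointwise stabiliser $\Gamma_S$ contains $\prod_i\Alt(\Omega_i\setminus S)$, and a short block argument shows that any group containing the product of alternating groups on its (infinite) orbits acts strongly orbit primitively. Moreover $\Gamma_S\Gamma_{\textit{fin}}=\Gamma$ (since $\Gamma_{\textit{fin}}$ is transitive on each orbit), so $\Gamma_S$ has the same translation image as $\Gamma$ and is therefore level and of full Hirsch length. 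Thus the preceding Theorem applies \emph{directly} to every clique stabiliser $\Gamma_S$, giving it type $\FFF_{n-1}$ and hence type $\FP_{n-2-p}$ for all $p\ge0$. This is the route the paper takes: no induction on $m$ is needed, and Theorem~\ref{finite gen} plays no role in the argument for $n\ge3$.
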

\begin{proof} Note that $G$ is abstractly commensurable to $\mathcal{W}\wr \Gamma$, where $\mathcal{W}$ and $\Gamma$ satisfy the conditions of Theorem \ref{thm:structure}. Then $\mathcal{W}\wr \Gamma$ has orbits $\Omega_1, \ldots, \Omega_k\subseteq \Ra_n$, all of which are infinite. Our aim is to show that conditions (i)-(iii) of \cite[Theorem A]{KM1} are satisfied by $\mathcal{W}\wr \Gamma$. Conditions (i) and (ii) are immediate. Form a graph $X$ with vertex set $\Ra_n$ and edge set $\{(v, w) : \{v, w\}\subset \Omega_d$ for some $d\in \{1, \ldots, k\}\}$. Since $\mathcal{W}\wr \Gamma$ contains $\Alt(\Omega_1)\times\cdots\times\Alt(\Omega_k)$, $\mathcal{W}\wr \Gamma$ has finitely many orbits of $(m+1)$-cliques and the stabilizer of each $(m+1)$-clique is of type $\FP_n$. We can therefore apply Lemma \ref{graphwreathresult} so that condition (iii) of \cite[Theorem A]{KM1} is satisfied.
\end{proof} 

\main

\begin{proof}
	Let $G$ be a subgroup of $H_n$ of full Hirsch length. Then $G$ is finitely generated, which is to say of type $\FFF_1$, by Theorem~\ref{finite gen} and also has \textbf{max-n} by the same result. Moreover, if $n \ge 3$, then $G$ is of type $\FFF_{n-1}$ by Proposition~\ref{Type Fn}. Hence for all $n \ge 2$, $G$ is of type $\FFF_{n-1}$. 
	
	For $n \ge 3$ Theorem~\ref{NoFPn} says that $G$ is not of type $\FP_{n}$, since $h(G)=h(H_n)$. Whereas the same result says that if $n=2$, either $G$ is not of type $\FP_2$ or is finite-by-$\Z$. 
\end{proof}


\begin{thebibliography}{00}

\bibitem{Alonso}
J.~M. Alonso.
\newblock Finiteness conditions on groups and quasi-isometries.
\newblock {\em J. Pure Appl. Algebra}, 95(2):121--129, 1994.

\bibitem{MR3302573}
Y.~Antol{\'{\i}}n, J.~Burillo, and A.~Martino.
\newblock Conjugacy in {H}oughton's groups.
\newblock {\em Publ. Mat.}, 59(1):3--16, 2015.

\bibitem{BB}
M.~Bestvina and N.~Brady.
\newblock Morse theory and finiteness properties of groups.
\newblock {\em Invent. Math.}, 129(3):445--470, 1997.


\bibitem{Bieri}
R.~Bieri.
\newblock Homological dimension of discrete groups.
\newblock {\em Queen Mary Lecture Notes}, 1975.

\bibitem{BNSdirectprods}
R.~Bieri and R.~Geoghegan.
\newblock {Sigma invariants of direct products of groups}.
\newblock {\em Groups Geom. Dyn.}, 4(2): 251--261, 2010.

\bibitem{BNSdefinitions}
R.~Bieri and B.~Renz.
\newblock {Valuations on free resolutions and higher geometric invariants of groups}.
\newblock {\em Comment. Math. Helv.}, 63(3): 464--497, 1988.


\bibitem{Bridson2015}
M.~R. Bridson and P.~H. Kropholler.
\newblock Dimension of elementary amenable groups.
\newblock {\em J. Reine Angew. Math.}, 699:217--243, 2015.

\bibitem{Brown}
K.~S. Brown.
\newblock {\em Cohomology of groups}, volume~87 of {\em Graduate Texts in
  Mathematics}.
\newblock Springer-Verlag, New York, 1982.

\bibitem{Brown1987}
K.~S. Brown.
\newblock Finiteness properties of groups.
\newblock {\em J. Pure Appl. Algebra} 44(1--3):45--75, 1987.

\bibitem{CameronPG}
P.~J. Cameron.
\newblock {\em Permutation groups}, volume~45 of {\em London Mathematical
  Society Student Texts}.
\newblock Cambridge University Press, Cambridge, 1999.

\bibitem{DixonMortimerPG}
J.~D. Dixon and B.~Mortimer.
\newblock {\em Permutation groups}, volume~163 of {\em Graduate Texts in Mathematics Series}.
\newblock Springer New York, NY, 1996.

\bibitem{HL}
J.~A. Hillman and P.~A. Linnell.
\newblock Elementary amenable groups of finite {H}irsch length are
  locally-finite by virtually-solvable.
\newblock {\em J. Austral. Math. Soc. Ser. A}, 52(2):237--241, 1992.

\bibitem{MR521478}
C.~H. Houghton.
\newblock The first cohomology of a group with permutation module coefficients.
\newblock {\em Arch. Math. (Basel)}, 31(3):254--258, 1978/79.

\bibitem{KM1}
P.~Kropholler and A.~Martino.
\newblock Graph-wreath products and finiteness conditions.
\newblock {\em J. Pure Appl. Algebra}, 220(1): 422--434, 2016.

\bibitem{Leary2001}
I.~J. Leary and B.~E. A. Nucinkis.
\newblock Bounding the orders of finite subgroups.
\newblock {\em Publ. Mat.}, 45(1):259--264, 2001.

\bibitem{MR3078485}
S.~R. Lee.
\newblock {\em Geometry of {H}oughton's groups}.
\newblock ProQuest LLC, Ann Arbor, MI, 2012.
\newblock Thesis (Ph.D.)--The University of Oklahoma.

\bibitem{MR0401928}
P.~M. Neumann.
\newblock The structure of finitary permutation groups.
\newblock {\em Arch. Math. (Basel)}, 27(1):3--17, 1976.

\bibitem{Robinson} D.~J. S. Robinson, A Course in the Theory of Groups, Springer, New York, 1996.

\bibitem{Simon}
S. St.~John-Green. 
\newblock Centralizers in Houghton's groups. 
\newblock {\em Proc. Edinb. Math. Soc. (2)}, 58(3): 769--785, 2015. 


\bibitem{Stammbach} U. Stammbach, On the weak homological dimension of the group algebra of solvable groups, J. London Math. Soc. (2) {\bf 2}: 567--570, 1970.

\bibitem{BNSRHoughton}
M.~C. B. Zaremsky
\newblock The {BNSR}-invariants of the {H}oughton groups, concluded.
\newblock {\em Proc. Edinb. Math. Soc. (2)}, 63(1): 1--11, 2020.

\end{thebibliography}
\end{document}